\DeclareMathOperator*{\esssup}{ess\,sup}
\DeclareMathOperator*{\essinf}{ess\,inf}
\renewcommand{\vec}[1]{{#1}}
\newcommand{\tens}[1]{{#1}}
\begin{document}

\title{MAXIMUM ENTROPY METHODS AS THE BRIDGE BETWEEN MICROSCOPIC AND MACROSCOPIC THEORY} 

\author{JAMIE M. TAYLOR}

\address{Mathematical Institute, University of Oxford, Andrew Wiles Building, Woodstock Road,\\ Oxford, OX2 6GG\\ jamie.taylor@maths.ox.ac.uk}

\bibliographystyle{acm}

\maketitle

\begin{history}
\received{(Day Month Year)}
\revised{(Day Month Year)}
\comby{(xxxxxxxxxx)}
\end{history}

\begin{abstract}
This paper investigates a function of macroscopic variables known as the singular potential, building on previous work by Ball and Majumdar. The singular potential is a function of the admissible statistical averages of probability distributions on a state space, defined so that it corresponds to the maximum possible entropy given known observed statistical averages, although non-classical entropy-like objective functions will also be considered. First the set of admissible moments must be established, and under the conditions presented in this work the set is open, bounded and convex allowing a description in terms of supporting hyperplanes, which provides estimates on the development of singularities for related probability distributions. Under appropriate conditions it is shown that the singular potential is strictly convex, as differentiable as the microscopic entropy and blows up uniformly as the macroscopic variable tends to the boundary of the set of admissible moments.  Applications of the singular potential are then discussed, and particular consideration will be given to certain free-energy functionals typical in mean-field theory, demonstrating an equivalence between certain microscopic and macroscopic free-energy functionals. This allows statements about $L^1$-local minimisers of Onsager's free energy to be obtained which cannot be given by two-sided variations, and overcomes the need to ensure local minimisers are bounded away from zero and infinity before taking bounded variations. The analysis also permits the definition of a dual order parameter for which Onsager's free energy allows an explicit representation. Also the difficulties in approximating the singular potential by everywhere defined functions, in particular by polynomials, are addressed with examples demonstrating the failure of the Taylor approximation to preserve shape properties of the singular potential.
\end{abstract}

\keywords{Order parameter constraints; mean-field theory; maximum entropy methods; liquid crystals.}

\ccode{AMS Subject Classification: 46N10, 49N99, 82B26}

\markboth{Jamie M. Taylor}{Maximum Entropy Methods As The Bridge Between Microscopic And Macroscopic Theory}

\section{Introduction}
In many-body problems in physics it is often required to reduce the complexity of the problem by applying statistical methods. Consider a state space $X$ with a corresponding measure $\mu$, and a probability distribution $\rho$ on $X$ that describes the probability of a given body occupying the state $t \in X$. Two particular examples are firstly nematic liquid crystals, where axially symmetric molecules can be described by their orientation, so that $X=\mathbb{S}^2$ \cite{mottram2004introduction}, and the Boltzmann equation with state space $X=\mathbb{R}^3\times \mathbb{R}^3$ corresponding to the position and momentum of particles \cite{harris2004introduction}. Thermodynamic equilibria can then be described as minima of free energy functionals on $\mathcal{P}(X)=\{\rho \in L^1(X):\int_X\rho\,d\mu=1,\rho\geq 0 \text{ a.e}\}$, the set of probability distributions on $X$. A particularly common example is the mean-field free energy, based on the second virial expansion and due to Onsager \cite{onsager1949effects} and is typically given by 
\begin{equation}
\mathcal{I}_{\mathcal{P}}(\rho)=T\int_X \rho(t) \ln \rho(t)\,d\mu(t) - \frac{1}{2} \int_X\int_X K(s,t)\rho(s)\rho(t)\,d\mu(t)\,d\mu(s).
\end{equation}
where the function $K \in L^\infty(X\times X)$ is a symmetric positive kernel, $T>0$ represents temperature and $\mu$ is a measure on $X$. This is analogous to the Helmholtz free energy, with the left-hand term representing an entropic contribution and the right-hand term representing chemical energy. This work will be concerned with the commonly considered case where the kernel $K$ is of the form 
\begin{equation}
K(s,t)=\sum\limits_{i,j=1}^k c_{ij}a_i(t)a_j(s)
\end{equation}
for some $k\in\mathbb{N}$, constants $c_{ij}$, and a set of linearly independent functions $a_i \in L^\infty(X)$ for $i=1,...,k$. The functions $a_i$ will often be denoted by a single vector valued function, $\vec{a}=(a_i)_{i=1}^k \in L^\infty(X,\mathbb{R}^k)$. An alternative way to approach such problems is by considering only macroscopic variables typically called order parameters, defined as statistical averages corresponding to the mean field model by
\begin{equation}
b_i = \int_X \rho(t)a_i(t)\,d\mu(t).
\end{equation}
The vector of moments will be denoted $\vec{b}=(b_i)_{i=1}^k \in \mathbb{R}^k$. By considering a finite dimensional state space of order parameters and disregarding the full statistical nature of the problem, analysis becomes much simpler through tools such as the Landau expansion (see for example Ref. \refcite{toledano1987landau} for a broad review). Within the context of the Q-tensor model for nematic liquid crystals, Majumdar noted that by losing the statistical nature of the problem, physical constraints on $b_i$ can be lost \cite{majumdar2010equilibrium}. For example from the H\"older inequality it must hold that $|b_i| \leq ||a_i||_\infty$ for each $i$. Motivated by the notation used in nematic liquid crystals, define $\mathcal{Q}$ to be the set of admissible order parameters,
\begin{equation}\mathcal{Q}=\left\{\left(\int_X a_i(t)\rho(t)\,dt\right)_{i=1}^k : \rho \in \mathcal{P}(X)\right\}\subset\mathbb{R}^k.\end{equation}
Majumdar investigated conditions under which equilibrium values of the order parameters are physical, in that they are elements of $\mathcal{Q}$, demonstrating that the Landau model can fail in this sense. Motivated by this problem Ball and Majumdar \cite{ball2010nematic} defined a {\it singular potential} $\psi_s$ on $\mathcal{Q}$ within the Q-tensor theory of nematics that builds on earlier work by Katriel {\it et. al.}\cite{katriel1986free} The singular potential is a convex function, inspired by the entropic term in the mean-field free energy, that blows up as the order parameters approach the boundary of $\mathcal{Q}$. Whilst the work of Ball and Majumdar concentrated on ensuring physicality in static problems relating to nematic liquid crystals, it has further been applied to dynamic systems \cite{feireisl2012evolution,wilkinson2012strict}, nematic elastomers \cite{calderer2013landau}, and the derivation of Q-tensor models \cite{han2013microscopic}, demonstrating the versatility of the framework. The aim of this work is to extend the ideas of Ball and Majumdar to define a singular potential in a more general setting, as well as analyse its properties and develop applications.

Before the singular potential can be defined the set of admissible moments $\mathcal{Q}$ must first be understood. This will form the bulk of Section \ref{sectionMomentProblem}. This is an example of the problem of moments \cite{akhiezer1965classical}, which in the general case is poorly understood. The classical problem considers $X=\mathbb{R}$, and functions $a_i(x)=x^i$, although others have considered the more general setting. For example, Lewis has provided a characterisation of the set of moments that roughly corresponds to the existence of solutions to certain optimisation problems \cite{lewis1995consistency}. In Section \ref{sectionMomentProblem} a more geometric description of the set $\mathcal{Q}$ will be useful, in which the set is described in terms of supporting hyperplanes. Whilst this description of $\mathcal{Q}$ is abstract and in general is unlikely to give an explicit expression for $\mathcal{Q}$ it is nonetheless appropriate for  the analysis in this work. In particular, this characterisation is used in providing growth bounds on the singular potential. Throughout both Sections \ref{sectionMomentProblem} and \ref{sectionSingularPotential} the assumption is made that the constraint functions $(a_i)_{i=1}^k$ and the constant function $a_0(t)= 1$ form a pseudo-Haar set of functions, a property first defined by Borwein and Lewis \cite{borwein1991duality}, which provides an elegant theory without being overly restrictive to applications in mean-field theory.

The singular potential is defined and analysed in Section \ref{sectionSingularPotential}. The singular potential corresponds to the greatest possible entropy of a probability distribution, subject to observed moments. This is an application of the principle of maximum entropy, pioneered in the seminal work of Jaynes \cite{jaynes1957information}, which has since been influential in a vast array of applications. Mathematically it can be phrased as a convex optimisation problem for the probability distribution subject to linear constraints. For the bulk of this work more general objective functions will be considered than the Shannon entropy, which is given by $\phi(x)=x\ln x$. The objective function will be required to share similar properties to the Shannon entropy (see Definition \ref{defEntropyLike}). One of these properties is that the objective function is not differentiable at $0$, which combined with the non-negativity constraint $\rho \geq 0$ has a consequence of not permitting two-sided variations about the minimiser $\rho^*$, unless it is known beforehand that $\essinf\limits_{t \in X} \rho^*(t)>0$. Borwein and Lewis \cite{borwein1991duality} provide an ideal framework for this kind of problem, based on duality, that will be exploited in this work. The main results of this section are that, under appropriate conditions, the singular potential $\psi_s$ is strictly convex, has the same differentiability as the objective function, and that $\psi_s(\vec{b})$ blows up to $+\infty$ uniformly as the distance from $\vec{b}$ to $\partial\mathcal{Q}$ approaches zero. 

Finally in Section \ref{sectionApplications} some potential applications are discussed. The results demonstrate an equivalence between certain minimisation problems in $\mathcal{P}(X)$ and $\mathcal{Q}$, and demonstrate the possibility of using the singular potential to rephrase harder questions relating to functional analysis in terms of simpler questions of several variable calculus. Particular mention is given to the mean-field approximation, where in the literature a rigorous treatment of the non-negativity constraint is often neglected, and the results in Section \ref{sectionApplications} provide an existence proof for minimisers as well as demonstrating that the solutions obtained using the first variation are correct. Furthermore the equivalence of local/global minimisers of the macroscopic and microscopic free energies has the implication that under the assumptions presented in this work, equilibria of mean-field free energies can be described using only macroscopic variables and the optimal entropy assumption. As noted by Decarreau {\it et. al} \cite{decarreau1992dual}, the dual formulation of the optimisation problem, which rephrases the optimisation problem in terms of finitely many Lagrange multipliers, is the most desirable from the point of numerical analysis. The ``moment-space" representation outlined in this work is loosely speaking equivalent, permits a simpler state space for analytical problems, and also allows the problem to be phrased directly in terms of order parameters. Expanding further on the work of Ball and Majumdar \cite{ball2014equilibrium}, models with spatial inhomogeneities will be considered and it is shown that for a particular class of free-energy functionals, minimisers are strictly physical, in the sense that the minimiser is bounded away from $\partial \mathcal{Q}$. However the class of models in which this analysis works is rather limited so that it remains open if more general free energies have strictly physical minimisers. 

The remainder of Section \ref{sectionApplications} discusses issues surrounding the approximation of the singular potential by globally defined functions, in particular by polynomials. The Landau theory of phase transitions \cite{toledano1987landau} states that since the free energy should be analytic in the order parameters, one can consider a Taylor expansion of the free energy to low (typically fourth) order and perform an energy minimisation over the approximation. It is shown that even within some simple one dimensional examples, the fourth order Taylor approximation to the singular potential in general fails to reproduce various desirable properties of the singular potential such as convexity, the existence of a single critical point and even a lower bound. Whilst it is possible to approximate the singular potential by convex polynomials due to a Weierstrass-type result, constructive methods appear to be out reach. Similarly, due to the difficulty of establishing the convexity of fourth order polynomials in several variables, even providing conditions for the Taylor approximation to be convex appears to be unobtainable. In response to this, the Yosida-Moreau approximation \cite{moreau1965proximite} is suggested as a globally defined approximation that preserves meaningful properties of the singular potential. The drawback however is that the Yosida-Moreau approximation is similarly defined through an optimisation procedure that, loosely speaking, is no less, although fortunately no more, difficult to solve than the optimisation problem that defines the singular potential itself. To illustrate issues with approximation and provide a concrete example of the singular potential framework, the McMillan model \cite{mcmillan1971simple} for isotropic-nematic-smectic A phase transitions is used as a guiding example. The isotropic phase corresponds to a disordered liquid, the nematic phase corresponds to molecules having orientational, but no positional order, and the smectic A phase corresponds to molecules having orientational order, as well as positional order in one direction, parallel to the molecular orientation. This model is described by a mean field free energy, with state space $X=\mathbb{S}^2\times[0,1]$, and two order parameters $(S,\sigma)$ defined as
\begin{equation}
\begin{split}
S =& \frac{1}{2}\int_0^1\int_{\mathbb{S}^2} \rho(\vec{p},x)\left(3\cos(\vec{p
}\cdot \vec{e}_1)^2-1\right)\,d\mathcal{H}^2(\vec{p})\,dx\\
\sigma =& \frac{1}{2}\int_0^1\int_{\mathbb{S}^2}\rho(\vec{p},x)\left(3\cos(\vec{p}\cdot \vec{e}_1)^2 -1\right)\cos(2\pi x) \, d\mathcal{H}^2(\vec{p})\,dx.
\end{split}
\end{equation}
Here $\vec{e}_1$ is a unit vector. If $S=\sigma=0$ then the system is in the isotropic phase, if $S\neq 0$ and $\sigma=0$ then the system is in the nematic phase, and finally if $S\neq 0 $ and $\sigma \neq 0$, the system is in the smectic A phase.

\section{The Moment Problem}
\label{sectionMomentProblem}

The key condition required for the framework presented to be successful is the pseudo-Haar property of a finite set of functions. 

\begin{definition}[Pseudo-Haar functions, see Ref. \refcite{borwein1992partially}]
Let $(X,\mu)$ be a finite measure space. Let $k \in \mathbb{N}$ and $f_i: X \to \mathbb{R}$, for $i=1,...,k$. The set $\{f_i:i=1,...,k\}$ is called pseudo-Haar if for every $Y \subset X$ with $\mu(Y)>0$, the set $\{f_i|_{Y}: i=1,...,k\}$ is linearly independent.
\end{definition}

\begin{remark}
If $(X,\mu)$ is a measure space with atoms, so that there exists some $t \in X$ with $\mu(\{t\})>0$, then any set of two or more functions on $X$ cannot be pseudo-Haar, since restricted to the set $\{t\}$ one is simply a multiple of the other. In particular the theory presented here is not applicable to discrete state spaces, such as $X$ a finite subset of $\mathbb{N}$. 
\end{remark}

Lewis \cite{lewis1995consistencyReport} showed that for $X$ a subset of $\mathbb{R}^n$, and $(f_i)_{i=1}^k$ analytic and linearly independent on a connected neighbourhood of $X$ then the functions are pseudo-Haar with respect to the Lebesgue measure. This result can be slightly extended to manifolds, which has applications in liquid crystal theory, where in particular $X$ is taken as $\mathbb{S}^2$ for axially symmetric molecules or $\text{SO}(3)$ for molecules with lower symmetry. By phrasing the problem in local coordinates, the result of Lewis can be used to extend the result to when the domain is a Riemannian manifold, and the measure taken will be the one induced by the metric, to be denoted by $\mu_g$. The result can trivially be extended for any other measure $\mu'$ such that for all $A \subset X$, $\mu'(A)=0\Rightarrow \mu_g(A)=0$. For a relatively self contained introduction to the theory of integration on manifolds, the reader is directed to Ref. \refcite{amann2009analysis}. Before attempting the proof, some preliminaries will be taken covered. 

\begin{definition}
Let be $X$ an analytic manifold with atlas $\left((U_\alpha,\phi_\alpha)\right)_{\alpha \in J}$, where $J$ is some index set. Say that $f:X\to\mathbb{R}$ is analytic if $f\circ \phi_\alpha^{-1}:\phi(U_\alpha)\to\mathbb{R}$ is analytic for all $\alpha \in J$.
\end{definition}

\begin{proposition}[Proposition 12.1.6 of Ref. \refcite{amann2009analysis}]
\label{propNullSetsManifold}
Let $(X,g)$ be an $n$-dimensional Riemannian manifold with induced measure $\mu_g$. Then $\mu_g(A)=0$ if and only if $\mathcal{L}^n(\phi(A\cap U))=0$ for all charts $(U,\phi)$ of $X$. 
\end{proposition}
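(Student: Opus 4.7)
The plan is to reduce both implications to the change-of-variables formula that expresses $\mu_g$ locally as a Lebesgue integral weighted by the strictly positive, continuous factor $\sqrt{\det g}$. Concretely, in any chart $(U,\phi)$ the Riemannian measure admits the representation
\[\mu_g(E)=\int_{\phi(E)}\sqrt{\det(g_{ij}\circ\phi^{-1})(x)}\,d\mathcal{L}^n(x)\]
for measurable $E\subset U$, where $(g_{ij})$ are the local components of the metric. Since $g$ is positive definite and its components are continuous, the density is strictly positive and continuous on $\phi(U)$, hence bounded above and below by positive constants on every relatively compact subset of $\phi(U)$.

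For the forward direction I would fix an arbitrary chart $(U,\phi)$ and note that if $\mu_g(A)=0$ then $\mu_g(A\cap U)=0$, so the integral of the positive density over $\phi(A\cap U)$ vanishes. Exhausting $\phi(U)$ by compact subsets on which the density admits a positive lower bound then forces $\mathcal{L}^n(\phi(A\cap U)\cap K)=0$ for each such $K$, and hence $\mathcal{L}^n(\phi(A\cap U))=0$. For the converse I would invoke second countability of $X$ to extract a countable atlas $\{(U_i,\phi_i)\}_{i\in\mathbb{N}}$, write $A=\bigcup_i (A\cap U_i)$, and apply the same representation in the opposite direction: a local upper bound on $\sqrt{\det g}$ on each $U_i$ combined with $\mathcal{L}^n(\phi_i(A\cap U_i))=0$ gives $\mu_g(A\cap U_i)=0$. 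Countable subadditivity then yields $\mu_g(A)=0$.

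The main technical point, rather than a deep obstacle, is establishing the integral representation of $\mu_g$, which is obtained by patching local definitions via a partition of unity subordinate to an atlas and checking compatibility of the Jacobian factors under changes of chart. The remaining details, namely the measurability of $\phi(A\cap U)$, which follows because $\phi$ is a homeomorphism and thus maps Borel sets to Borel sets, and the existence of a countable atlas, which follows from second countability of $X$, are routine.
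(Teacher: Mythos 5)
The paper does not supply its own proof of this proposition; it is cited directly from Amann's textbook (Proposition 12.1.6), so there is no in-paper argument to compare against. Your proof is correct and is the standard one: represent $\mu_g$ locally as $\sqrt{\det(g_{ij}\circ\phi^{-1})}\,d\mathcal{L}^n$ with a strictly positive continuous density, deduce that $\mu_g$ and $\mathcal{L}^n$ share null sets chart by chart, and globalize the converse direction via a countable atlas (second countability) together with countable subadditivity. Two small simplifications are available. First, the compact exhaustion in the forward direction is unnecessary: the elementary measure-theoretic fact that $\int_E h\,d\mathcal{L}^n=0$ with $h>0$ everywhere forces $\mathcal{L}^n(E)=0$ (decompose $E$ into the sets $\{h>1/n\}$) already suffices, without needing uniform lower bounds. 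Second, the remark that $\phi$ ``maps Borel sets to Borel sets'' is adequate for Borel $A$, but since the proposition is usually applied to $\mu_g$-measurable sets one should add the routine step of sandwiching $A$ between Borel sets of equal measure; this is a detail rather than a gap. Also note that for the converse, the local upper bound on $\sqrt{\det g}$ that you invoke is not actually needed: the integral of any nonnegative measurable density over a Lebesgue-null set is automatically zero.
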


\begin{proposition}[From Ref. \refcite{lewis1995consistencyReport}]
\label{propLewisPH}
Let $\Omega \subset \mathbb{R}^n$ be open and connected, and let $f:\Omega\to\mathbb{R}$ be analytic. Then if there exists some set $A \subset \Omega$ with $\mathcal{L}^n(A)>0$ so that $f|_A=0$, then $f=0$. 
\end{proposition}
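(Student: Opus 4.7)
The strategy is to combine the Lebesgue density theorem with the identity principle for analytic functions: at a Lebesgue density point of $A$, every Taylor coefficient of $f$ must vanish, and then connectedness of $\Omega$ propagates the conclusion $f \equiv 0$ to the whole domain.

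First I would apply the Lebesgue density theorem to select a point $x_0 \in A$ satisfying
\[
\lim_{r \to 0^+} \frac{\mathcal{L}^n(A \cap B(x_0,r))}{\mathcal{L}^n(B(x_0,r))} = 1.
\]
Since $f$ is analytic at $x_0$, on some ball $B(x_0,r_0) \subset \Omega$ the series $f(x) = \sum_\alpha c_\alpha (x-x_0)^\alpha$ converges. Supposing for contradiction that not every $c_\alpha$ vanishes, set $m = \min\{|\alpha| : c_\alpha \neq 0\}$ and let $P_m(y) = \sum_{|\alpha|=m} c_\alpha y^\alpha$ be the leading homogeneous part, so that $|f(x_0+y) - P_m(y)| \leq C|y|^{m+1}$ for some $C > 0$ and all sufficiently small $|y|$.

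The crux, and the main obstacle, is to show that the zero set $\{f = 0\}$ has Lebesgue density zero at $x_0$, which would contradict $A \subset \{f=0\}$. I would rescale by defining $g_r(y) = r^{-m} f(x_0 + ry)$ on $\overline{B(0,1)}$, obtaining the uniform estimate $|g_r(y) - P_m(y)| \leq Cr$. A routine induction on dimension (Fubini together with the fact that a nonzero univariate polynomial has finitely many roots) shows that the zero set of any nonzero polynomial on $\mathbb{R}^n$ is Lebesgue null. Hence for each $\varepsilon > 0$, the level set $E_\varepsilon := \{y \in B(0,1) : |P_m(y)| < \varepsilon\}$ satisfies $\mathcal{L}^n(E_\varepsilon) \to 0$ as $\varepsilon \to 0^+$. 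For $r \leq \varepsilon/(2C)$ the uniform estimate yields $|g_r| \geq \varepsilon/2$ off $E_\varepsilon$, so $\{g_r = 0\} \cap B(0,1) \subset E_\varepsilon$, and rescaling gives
\[
\frac{\mathcal{L}^n(\{f = 0\} \cap B(x_0,r))}{\mathcal{L}^n(B(x_0,r))} \leq \frac{\mathcal{L}^n(E_\varepsilon)}{\mathcal{L}^n(B(0,1))},
\]
which can be made arbitrarily small by first taking $\varepsilon$ small and then $r$ small. This contradicts the density of $A \subset \{f = 0\}$ at $x_0$, forcing every $c_\alpha$ to vanish and hence $f \equiv 0$ on $B(x_0,r_0)$.

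Finally, I would propagate this local vanishing to all of $\Omega$ by a standard connectedness argument. Define
\[
Z = \{x \in \Omega : \partial^\alpha f(x) = 0 \text{ for every multi-index } \alpha\}.
\]
The set $Z$ is closed in $\Omega$ by continuity of the partial derivatives, and open because analyticity forces $f$ to coincide with its identically vanishing Taylor series on a neighborhood of any $x \in Z$. Since $Z$ is nonempty (it contains $x_0$) and $\Omega$ is connected, $Z = \Omega$, yielding $f \equiv 0$ throughout $\Omega$.
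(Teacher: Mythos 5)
The paper does not prove this proposition; it is quoted directly from Lewis's consistency report, Ref.\ \refcite{lewis1995consistencyReport}, and used as a black box. There is therefore no in-paper argument to compare against, and the right question is simply whether your self-contained proof is correct --- it is. The key move, rescaling so that the minimal-order homogeneous part $P_m$ dominates and then bounding the sub-level set $E_\varepsilon$ by continuity of measure from above, cleanly shows that the zero set of a non-identically-vanishing analytic germ has Lebesgue density zero at the base point, which contradicts the density-one property of $A$; the open-and-closed argument then propagates local vanishing over the connected set $\Omega$. All the estimates check out: $|g_r - P_m|\le Cr$ follows from homogeneity of $P_m$ together with the Taylor remainder bound, and $\{g_r=0\}\cap B(0,1)\subset E_\varepsilon$ for $r\le \varepsilon/(2C)$ gives the scaled measure bound you state. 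One small refinement worth noting: if $A$ is only assumed to have positive outer measure rather than being measurable, apply the Lebesgue density theorem to the closed (hence measurable) set $\{f=0\}\supset A$ instead of to $A$ itself --- the rest of the argument is unchanged.
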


Using these results it is now straightforward to obtain the required result.

\begin{proposition}
\label{propPseudoHaarManifold}
Let $(X,g)$ be a connected, analytic, Riemannian manifold. Denote by $\mu$ the measure on $X$ induced by the metric $g$. Let $f_i : X \to \mathbb{R}$ be analytic functions on $X$ for $i=1,..,k$ for some $k \in \mathbb{N}$. Then the set $\{f_i : i =1,..,k\}$ is linearly independent if and only if it is pseudo-Haar. 
\end{proposition}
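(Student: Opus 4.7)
The plan is to prove the nontrivial direction (linear independence implies pseudo-Haar) by contrapositive: assuming there is a set $Y \subset X$ of positive measure on which some nontrivial linear combination $f = \sum_{i=1}^k c_i f_i$ vanishes, I will show $f \equiv 0$ on $X$, contradicting linear independence. The reverse direction is immediate, since pseudo-Haar with $Y = X$ is linear independence. Note that $f$ is analytic on $X$ as a finite linear combination of analytic functions.

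First I would use Proposition~\ref{propNullSetsManifold} in its contrapositive form: since $\mu_g(Y) > 0$, there must exist at least one chart $(U,\phi)$ for which $\mathcal{L}^n(\phi(Y \cap U)) > 0$. Shrinking $U$ if necessary to a chart whose image is a connected open subset of $\mathbb{R}^n$ (e.g.\ by restricting to a coordinate ball around a density point of $\phi(Y\cap U)$), the pulled-back function $f \circ \phi^{-1}$ is analytic on the connected open set $\phi(U) \subset \mathbb{R}^n$ and vanishes on a subset of positive Lebesgue measure. Proposition~\ref{propLewisPH} then forces $f \circ \phi^{-1} \equiv 0$ on $\phi(U)$, so $f$ vanishes identically on the nonempty open set $U \subset X$.

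The remaining step is to propagate this vanishing from $U$ to all of $X$ using connectedness — this is the essential use of the hypothesis that $X$ is connected, and is the step that requires the most care since Proposition~\ref{propLewisPH} is only stated in $\mathbb{R}^n$. I would argue via the standard clopen-set trick: define
\begin{equation*}
W = \{x \in X : f \text{ vanishes on some open neighbourhood of } x\}.
\end{equation*}
Then $W$ is open by construction and nonempty since $U \subset W$. To see $W$ is closed, take $x \in \overline{W}$ and pick a chart $(V,\psi)$ around $x$ with $\psi(V)$ a connected open subset of $\mathbb{R}^n$. Some point of $W$ lies in $V$, so $f \circ \psi^{-1}$ vanishes on an open (hence positive-measure) subset of $\psi(V)$; applying Proposition~\ref{propLewisPH} again on $\psi(V)$ yields $f \equiv 0$ on $V$, so $x \in W$. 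Connectedness of $X$ gives $W = X$, hence $f \equiv 0$, contradicting linear independence of $\{f_i\}$.

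The main obstacle is really the passage from the global measure-theoretic hypothesis ``$f$ vanishes on a set of positive $\mu_g$-measure'' to a local Euclidean statement to which Proposition~\ref{propLewisPH} applies; Proposition~\ref{propNullSetsManifold} is the bridge, but one must take some care that the chart neighbourhoods used in both the initial step and the connectedness argument have connected image in $\mathbb{R}^n$ so that Lewis's theorem can legitimately be invoked.
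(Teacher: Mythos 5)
Your proof is correct and takes essentially the same route as the paper: localise via Proposition~\ref{propNullSetsManifold} to a chart with connected image on which $\phi(Y\cap U)$ has positive Lebesgue measure, apply Proposition~\ref{propLewisPH} to conclude $f$ vanishes on an open set, then propagate by connectedness. The only difference is that you explicitly carry out the clopen-set argument for the final analytic-continuation step, whereas the paper compresses it into the single sentence ``since $X$ is connected and analytic, $F$ is zero on all of $X$.''
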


\begin{proof}
If the functions are pseudo-Haar then linear independence is immediate. To show the converse, assume the result is false. This implies that there exists some $\vec{\xi} \in \mathbb{R}^k$ with $\vec{\xi} \neq \vec{0}$ and $A \subset X$ so that $\mu_g(A)>0$ and $\sum\limits_{i=1}^k \xi_if_i(x)=0$ for all $x \in A$. For simplicity denote $F=\sum\limits_{i=1}^k\xi_i f_i$. Then it is clear that $F$ is analytic, and $F|_A=0$. Without loss of generality take all charts to be connected by splitting the charts into connected components where necessary. Using Prop. \ref{propNullSetsManifold}, there must exist some chart $(U,\phi)$ so that $\mathcal{L}^n(\phi(A\cap U))>0$. For brevity let $\tilde{U}=\phi(U)$ and $\tilde{A}=\phi(A\cap U)$. Also let $\tilde{F}=F\circ \phi^{-1}:\tilde{U}\to\mathbb{R}$. Since $F$ is analytic, then $\tilde{F}$ is analytic. Therefore since $\tilde{F}|_{\tilde{A}}=0$ with $\mu_g(\tilde{A})>0$, and $\tilde{F}$ analytic on the connected open set $\tilde{U}$, by Prop. \ref{propLewisPH} it must hold that $\tilde{F}=0$ on $\tilde{U}$. Composing $\tilde{F}$ with $\phi$ gives that $F=0$ on $U$, which is open. Therefore since $X$ is connected and analytic, $F$ is zero on all of $X$. Recalling the definition of $F$, this implies $\sum\limits_{j=1}^k \xi_if_i=0$, contradicting the linear independence assumption and completing the proof. \qed
\end{proof}

\begin{definition}[The sets $\mathcal{P}(X)$ and $\mathcal{Q}$]
Let $(X,\mu)$ be a finite measure space. Consider finitely many constraint functions $a_i \in L^\infty(X)$ for $i=1,...,k$, such that the set $\{t\mapsto 1\}\cup\{a_i:i=1,..,k\}$ is pseudo-Haar. For ease of notation let $\vec{a}=(a_i)_{i=1}^k \in L^\infty(X,\mathbb{R}^k)$. Define the sets
\begin{equation}
\begin{split}
\mathcal{P}(X) &= \left\{ \rho \in L^1(X) : \rho\geq 0 \,\,\,\mu\text{-a.e.}, \, \int_X \rho\, d\mu = 1\right\},\\
\mathcal{Q}&= \left\{ \int_X \vec{a} \rho \, d\mu : \rho \in \mathcal{P}(X) \right\} \subset \mathbb{R}^k.
\end{split}
\end{equation}
Furthermore, if $\vec{b} \in \mathcal{Q}$, then $\vec{b}$ will be called physical, and if $\vec{b}=\int_X \rho \vec{a}\,d\mu$, then $\vec{b}$ is generated by $\rho$. 
\end{definition}

\begin{definition}
\label{defES}
Let $\vec{u} \in \mathbb{S}^{k-1}$. Given the constraint functions $(a_i)_{i=1}^k$ with $a_i \in L^\infty(X)$ for $i=1,..,k$, define $S_{\vec{u}} \in \mathbb{R}$ by
\begin{equation}
S_{\vec{u}} = \esssup\limits_{t \in X} \vec{u} \cdot \vec{a}(t).
\end{equation}
Let $\epsilon > 0$. Define the set $E_\epsilon^{\vec{u}}\subset X$ as 
\begin{equation}
E_\epsilon^{\vec{u}} = \left\{ t \in X : S_{\vec{u}} < {\vec{u}}\cdot \vec{a}(t)+\epsilon\right\}.
\end{equation}
\end{definition}

Note that since $\mu(X)<\infty$, $L^\infty(X) \subset L^2(X)$, so that up to an invertible linear transformation, it can be assumed that the functions  $(a_i)_{i=1}^k$ are $L^2(X)$ orthonormal functions, so that $\int_X a_i a_j\,d\mu =\delta_{ij}$, and orthogonal to any constant so that $\int_X a_i \, d\mu=0$. For simplicity of calculation, this will be assumed unless stated otherwise, although it has little effect on the results since the non-normalised framework is equivalent up to an affine map on $\mathcal{Q}$. This has the consequence that the uniform distribution, $\rho_U(t)=\frac{1}{\mu(X)}$, satisfies $\int_X \rho_U(t) \vec{a}(t) \, d\mu(t)=\vec{0}$. This is relevant in Landau theory where the order parameters are required to be zero in the high temperature uniform state. In order to establish a necessary and sufficient condition for $\vec{b} \in \mathcal{Q}$, some properties of $\mathcal{Q}$ will be established. The method is to describe $\mathcal{Q}$ in terms of supporting hyperplanes.

\begin{proposition}
The set $\mathcal{Q}$ is convex, bounded and open.
\end{proposition}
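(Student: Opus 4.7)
The plan is to handle the three properties in ascending order of difficulty. Convexity follows at once: if $\vec{b}_1,\vec{b}_2\in\mathcal{Q}$ are generated by $\rho_1,\rho_2\in\mathcal{P}(X)$, then for $\lambda\in[0,1]$ the convex combination $\lambda\rho_1+(1-\lambda)\rho_2$ lies in $\mathcal{P}(X)$ and generates $\lambda\vec{b}_1+(1-\lambda)\vec{b}_2$. Boundedness is equally direct: for any $\vec{b}=\int_X\rho\,\vec{a}\,d\mu$ and any $\vec{u}\in\mathbb{S}^{k-1}$ one has $|\vec{u}\cdot\vec{b}|\leq\int_X\rho|\vec{u}\cdot\vec{a}|\,d\mu\leq\|\vec{a}\|_{L^\infty}$, so $\mathcal{Q}$ is contained in the closed ball of radius $\|\vec{a}\|_{L^\infty}$ about the origin.

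Openness is the heart of the matter, and I would argue it by contradiction using the supporting hyperplane theorem, with the pseudo-Haar hypothesis providing the crucial rigidity. Assume some $\vec{b}\in\mathcal{Q}$, generated by $\rho\in\mathcal{P}(X)$, is a boundary point of $\mathcal{Q}$ in $\mathbb{R}^k$. Since $\mathcal{Q}$ is convex, there exist $\vec{u}\in\mathbb{S}^{k-1}$ and $c\in\mathbb{R}$ such that $\vec{u}\cdot\vec{b}'\leq c$ for every $\vec{b}'\in\mathcal{Q}$ and $\vec{u}\cdot\vec{b}=c$. The next step is to identify $c$ with $S_{\vec{u}}$: on the one hand $c=\vec{u}\cdot\vec{b}\leq\esssup_{t}\vec{u}\cdot\vec{a}(t)=S_{\vec{u}}$; on the other, choosing $\rho_\epsilon=\mathbf{1}_{E_\epsilon^{\vec{u}}}/\mu(E_\epsilon^{\vec{u}})$, which lies in $\mathcal{P}(X)$ since $\mu(E_\epsilon^{\vec{u}})>0$ by the definition of essential supremum (Definition \ref{defES}), produces an element of $\mathcal{Q}$ whose pairing with $\vec{u}$ exceeds $S_{\vec{u}}-\epsilon$, forcing $c\geq S_{\vec{u}}-\epsilon$ for every $\epsilon>0$. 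Hence $c=S_{\vec{u}}$.

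The final step exploits the resulting identity $\int_X\rho(t)\bigl(S_{\vec{u}}-\vec{u}\cdot\vec{a}(t)\bigr)\,d\mu(t)=0$. Since both factors are non-negative, $\rho$ must vanish $\mu$-almost everywhere outside the set $A=\{t\in X:\vec{u}\cdot\vec{a}(t)=S_{\vec{u}}\}$. If $\mu(A)=0$ then $\rho=0$ $\mu$-a.e., contradicting $\int_X\rho\,d\mu=1$; otherwise $\mu(A)>0$, and the relation $-S_{\vec{u}}\cdot 1+\sum_{i=1}^k u_i a_i=0$ on $A$ is a nontrivial linear dependence (the vector $\vec{u}$ is a unit vector) among $\{1,a_1,\ldots,a_k\}$ restricted to a set of positive measure, directly contradicting the pseudo-Haar hypothesis.

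The main obstacle is precisely the last dichotomy in the openness argument: without pseudo-Haar the supremum $S_{\vec{u}}$ could be attained on a positive-measure set and a probability distribution concentrated there would genuinely realise a boundary point of $\mathcal{Q}$ lying inside $\mathcal{Q}$, so the theorem would fail. The sets $E_\epsilon^{\vec{u}}$ introduced in Definition \ref{defES} are tailored to produce the maximising sequence in $\mathcal{Q}$ that pins the supporting hyperplane down to the geometrically meaningful value $S_{\vec{u}}$, after which pseudo-Haar closes the argument.
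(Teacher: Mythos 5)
Your proof is correct, but your argument for openness takes a genuinely different route from the paper's. You argue by contradiction through the supporting hyperplane theorem: a boundary point $\vec{b}\in\mathcal{Q}$ generated by $\rho$ would lie on a supporting hyperplane with unit normal $\vec{u}$; you pin the hyperplane's offset to $S_{\vec{u}}$ using the approximating densities $\mathbf{1}_{E^{\vec{u}}_\epsilon}/\mu(E^{\vec{u}}_\epsilon)$; and then the resulting identity $\int_X\rho\,(S_{\vec{u}}-\vec{u}\cdot\vec{a})\,d\mu=0$, with non-negative integrand, forces $\rho$ to be supported on $A=\{t:\vec{u}\cdot\vec{a}(t)=S_{\vec{u}}\}$, so that either $\mu(A)=0$ and $\rho\equiv 0$ (contradicting $\int_X\rho\,d\mu=1$) or $\mu(A)>0$ and the relation $\vec{u}\cdot\vec{a}-S_{\vec{u}}\cdot 1=0$ on $A$ is a forbidden linear dependence among $\{1,a_1,\dots,a_k\}$, contradicting pseudo-Haar. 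The paper instead argues directly: it invokes Borwein--Lewis (Theorem 2.9 of Ref.~\refcite{borwein1991duality}) to produce a representing density $\rho\geq\epsilon>0$ a.e., then perturbs it by $\chi_{\vec{\xi}}=\sum_i\xi_i a_i$ (using the $L^2$-orthonormality normalization so that $\int_X\chi_{\vec{\xi}}\,d\mu=0$ and $\int_X\vec{a}\,\chi_{\vec{\xi}}\,d\mu=\vec{\xi}$) to realize every nearby moment $\vec{b}+\vec{\xi}$ with $|\vec{\xi}|<\epsilon/\esssup_{s\in X}|\vec{a}(s)|$ by a genuine probability density. Your route is more self-contained, uses pseudo-Haar head-on rather than through the cited strict-positivity theorem, and as a side benefit essentially re-derives the two directions of Theorem~\ref{theoremNecSufCond}; the paper's route is constructive and gives a quantitative lower bound on the radius of the ball around $\vec{b}$ contained in $\mathcal{Q}$, which your contradiction argument does not. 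Both are valid, and both hinge on the pseudo-Haar hypothesis exactly where you say they do.
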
 
\begin{proof}
Convexity is immediate from the convexity of $\mathcal{P}(X)$. 
To show boundedness, let $\vec{b} \in \mathcal{Q}$, corresponding to some $\rho \in \mathcal{P}(X)$. Then
\begin{equation}
\begin{split}
|\vec{b}|&=\left|\int_X \vec{a}(t) \rho(t) \, d\mu(t)\right|\\ 
&\leq  \esssup\limits_{t\in X} |\vec{a}(t)|
\end{split}
\end{equation}
so that $\mathcal{Q}$ is bounded. To see that $\mathcal{Q}$ is open, consider some $\vec{b} \in \mathcal{Q}$ and let $\vec{\xi} \in \mathbb{R}^k$. Since the functions $t \mapsto 1$ and $a_i$ are pseudo-Haar, by Ref. \refcite[Theorem 2.9]{borwein1991duality} there exists some $\rho \in \mathcal{P}(X)$ that generates $\vec{b}$ and $\epsilon > 0$ such that $\rho(t) \geq \epsilon > 0$ almost everywhere. Define the function $\chi_{\vec{\xi}}\in L^\infty(X)$ by 
\begin{equation}
\chi_{\vec{\xi}}(t)= \sum\limits_{i=1}^k a_i(t) \xi_i.
\end{equation}
Then $\int_X \chi_{\vec{\xi}} \, d\mu = 0$ and $\int_X \vec{a}(t) \chi_{\vec{\xi}}(t) \, d\mu(t) = \vec{\xi}$ due to the orthogonality of $(a_i)_{i=1}^k$ and the constant function. Note that $|\chi_{\vec{\xi}}(t)|= |\vec{\xi} \cdot a(t) | \leq |\vec{\xi}| \, \textup{ess}\sup\limits_{s \in X} |\vec{a}(s)|$. Hence if 
\begin{equation}
|\vec{\xi}|< \frac{\epsilon}{\esssup\limits_{s \in X} |\vec{a}(s)|},
\end{equation}
then $\rho_{\vec{\xi}}=\rho + \chi_{\vec{\xi}}$ is non-negative almost everywhere. Combining the previous results, this gives that $\rho_{\vec{\xi}} \in \mathcal{P}(X)$, and $\int_X \vec{a}(t) \rho_{\vec{\xi}}(t) \, d\mu(t) = \vec{b} + \vec{\xi}$, so that $\mathcal{Q}$ is open. 
\end{proof}

Using these properties it is now possible to characterise $\mathcal{Q}$ in terms of its supporting hyperplanes, and this will provide the necessary and sufficient condition for $\vec{b\in}\mathcal{Q}$.

\begin{theorem}
\label{theoremNecSufCond}
Let $\vec{b} \in \mathbb{R}^k$. Then $\vec{b} \in \mathcal{Q}$ if and only if 
\begin{equation}
\vec{b} \cdot \vec{u} < \esssup\limits_{t \in X} \vec{a}(t)\cdot \vec{u}=S_{\vec{u}}
\end{equation}
for all $\vec{u} \in \mathbb{S}^{k-1}$. 
\end{theorem}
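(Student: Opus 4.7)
The plan is a two-way argument, in which the forward implication uses a direct essential-supremum bound upgraded by openness of $\mathcal{Q}$, and the reverse implication uses a separating hyperplane. For the forward direction, if $\vec{b} \in \mathcal{Q}$ is generated by $\rho \in \mathcal{P}(X)$, then for any $\vec{u} \in \mathbb{S}^{k-1}$,
\begin{equation}
\vec{b} \cdot \vec{u} = \int_X \vec{a}(t) \cdot \vec{u}\, \rho(t)\, d\mu(t) \leq S_{\vec{u}} \int_X \rho\, d\mu = S_{\vec{u}}.
\end{equation}
To upgrade this to a strict inequality, I will invoke the openness of $\mathcal{Q}$ established in the previous proposition: for sufficiently small $t > 0$ the point $\vec{b} + t\vec{u}$ still lies in $\mathcal{Q}$, and applying the non-strict bound just derived to this perturbed point gives $\vec{b} \cdot \vec{u} + t \leq S_{\vec{u}}$, so $\vec{b} \cdot \vec{u} < S_{\vec{u}}$.

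For the converse, I will argue by contradiction. Suppose $\vec{b}$ satisfies the hyperplane inequalities but $\vec{b} \notin \mathcal{Q}$. Since $\mathcal{Q}$ is nonempty, convex and open, the Hahn-Banach separation theorem produces a nonzero vector, which after normalisation gives some $\vec{u} \in \mathbb{S}^{k-1}$ with
\begin{equation}
\vec{b} \cdot \vec{u} \geq \sup_{\vec{q} \in \mathcal{Q}} \vec{q} \cdot \vec{u}.
\end{equation}
The key intermediate claim I will establish is that this supremum coincides with $S_{\vec{u}}$. One direction is just the Hölder-type bound used above. For the reverse inequality I will exploit the sets $E_\epsilon^{\vec{u}}$ from Definition \ref{defES}: by the definition of essential supremum, $\mu(E_\epsilon^{\vec{u}}) > 0$ for every $\epsilon > 0$, so
\begin{equation}
\rho_\epsilon = \frac{1}{\mu(E_\epsilon^{\vec{u}})}\, \mathbf{1}_{E_\epsilon^{\vec{u}}} \in \mathcal{P}(X),
\end{equation}
and the moment $\vec{q}_\epsilon = \int_X \vec{a}\, \rho_\epsilon\, d\mu$ satisfies $\vec{q}_\epsilon \cdot \vec{u} > S_{\vec{u}} - \epsilon$. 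Letting $\epsilon \to 0$ gives $\sup_{\vec{q} \in \mathcal{Q}} \vec{q} \cdot \vec{u} \geq S_{\vec{u}}$, and combining with the separation inequality yields $\vec{b} \cdot \vec{u} \geq S_{\vec{u}}$, contradicting the hypothesis.

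The main obstacle is the clean application of the separation theorem together with the identification of the supremum over $\mathcal{Q}$ with $S_{\vec{u}}$; the pseudo-Haar assumption plays no direct role in the argument itself but is needed implicitly through the openness of $\mathcal{Q}$ established previously. A minor subtlety is making sure that rescaling the separating normal to $\mathbb{S}^{k-1}$ preserves the inequality, which is immediate since both sides scale positively with $|\vec{u}|$.
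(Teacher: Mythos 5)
Your proof is correct and follows essentially the same route as the paper: both reduce the statement to identifying the support function of $\mathcal{Q}$, i.e.\ showing $\sup_{\vec{q}\in\mathcal{Q}}\vec{q}\cdot\vec{u}=S_{\vec{u}}$ via the uniform distribution on $E_\epsilon^{\vec{u}}$, and both rely on the earlier result that $\mathcal{Q}$ is open, bounded, and convex. The only difference is in packaging: the paper cites Rockafellar's characterization of closed convex sets by supporting half-spaces in one stroke, whereas you rebuild it by hand from the H\"older bound, an openness perturbation for the forward direction, and a Hahn-Banach separation for the converse.
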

\begin{proof}
By Ref. \refcite[Theorem 11.5]{rockafellar1997convex} it is known that a closed convex set $K\subset \mathbb{R}^k$ can be written as the intersection of all closed half spaces containing $K$, so that $\vec{x} \in K$ is equivalent to 
\begin{equation}
\vec{x}\cdot \vec{u} \leq \sup\limits_{\vec{z} \in K} \vec{z} \cdot \vec{u}
\end{equation}
for all $\vec{u} \in \mathbb{S}^{k-1}$. If $\Omega \subset \mathbb{R}^k$ is a bounded open convex set, then by considering $K=\bar{\Omega}$ it is straightforward to deduce that $\vec{x} \in \Omega$ is equivalent to 
\begin{equation}
\vec{x} \cdot \vec{u} < \sup\limits_{\vec{z} \in K} \vec{z} \cdot \vec{u} = \sup\limits_{\vec{z} \in \Omega}\vec{z}\cdot \vec{u}
\end{equation}
for all $\vec{u} \in \mathbb{S}^{k-1}$, since a non-zero linear function cannot attain its maximum over a set in the interior. By taking $\Omega = \mathcal{Q}$, the supremum can be computed as
\begin{equation}
\begin{split}
\sup\limits_{\vec{z} \in \mathcal{Q}} \vec{z} \cdot \vec{u} &= \sup\limits_{\rho \in \mathcal{P}(X)} \int_X \vec{u} \cdot \vec{a}(t) \rho(t) \, d\mu(t)\\
&= \esssup\limits_{t \in X} \vec{u} \cdot \vec{a}(t)\\
&=S_{\vec{u}},
\end{split}
\end{equation}
by considering $\rho$ uniform on the set $E_\epsilon^{\vec{u}}=\{t \in X : \text{ess}\sup \vec{u}\cdot \vec{a}(s)<\vec{u}\cdot \vec{a}(t)+\epsilon\}$ and zero elsewhere, then taking $\epsilon \to 0$, which gives the necessary and sufficient condition.\qed
\end{proof}

Using the necessary and sufficient condition given previously it is possible to formalise the intuition that if $\vec{b}$ is close to the boundary of $\mathcal{Q}$, any probability distribution $\rho$ generating $\vec{b}$ must be concentrated on some subset of $X$ with small measure. Firstly, it will be shown that if $\vec{b} \in \mathcal{Q}$ is close to the boundary of $\mathcal{Q}$, then there is a certain set, depending on $\vec{b}$, on which any $\rho\in\mathcal{P}(X)$ with $\int_X\vec{a}\rho \,d\mu=\vec{b}$ must have most of its mass. Then secondly, it will be shown that as $\vec{b}$ approaches the boundary of $\mathcal{Q}$, this set becomes arbitrarily small in measure. These results will also be important in providing a growth bound for the singular potential in the next section.

\begin{lemma}
Let $\vec{b} \in \mathcal{Q}$, $\vec{b}_0\in\partial\mathcal{Q}$ with $|\vec{b}_0-\vec{b}|=\min\limits_{\tilde{\vec{b}}\in\partial\mathcal{Q}}|\tilde{\vec{b}}-\vec{b}|$. Define $\vec{u}\in\mathbb{S}^{k-1}$ to be the unit vector in the direction of $\vec{b}_0-\vec{b}$. Then $\vec{u}$ is normal to a supporting hyperplane of $\mathcal{Q}$ at $\vec{b}_0$, so that $\vec{b}_0\cdot \vec{u}=S_{\vec{u}}$, and $d(\vec{b},\partial\mathcal{Q})=S_{\vec{u}}-\vec{b}\cdot \vec{u}$. Furthermore, there is only one supporting hyperplane of $\mathcal{Q}$ at $\vec{b}_0$, with outward-pointing unit normal vector given by $\vec{u}$.
\end{lemma}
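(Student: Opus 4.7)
The plan is to use the supporting hyperplane description of $\overline{\mathcal{Q}}$ together with the geometric observation that the closest boundary point must also be the closest point on any supporting hyperplane, which pins down the normal direction.

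First I would observe that $\overline{\mathcal{Q}}$ is closed, convex and bounded, and since $\mathcal{Q}$ is open and convex one has $\partial\overline{\mathcal{Q}}=\partial\mathcal{Q}$, so $\vec{b}_0$ is a boundary point of the closed convex set. Next, I claim the open ball $B(\vec{b},r)$ with $r=|\vec{b}_0-\vec{b}|$ is contained in $\mathcal{Q}$. Indeed, by the minimising property of $\vec{b}_0$, no point of $B(\vec{b},r)$ lies in $\partial\mathcal{Q}$; every point of $B(\vec{b},r)$ can be joined to $\vec{b}\in \mathcal{Q}$ by a straight segment contained in $B(\vec{b},r)$, hence avoiding $\partial\mathcal{Q}$, so by connectedness the entire ball lies in $\mathcal{Q}$. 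Consequently $\overline{B(\vec{b},r)}\subset \overline{\mathcal{Q}}$.

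Second, by standard convex analysis (for instance Ref.~\refcite{rockafellar1997convex}), $\overline{\mathcal{Q}}$ admits a supporting hyperplane at $\vec{b}_0$; let $\vec{v}\in\mathbb{S}^{k-1}$ be its outward unit normal, so that $(\vec{x}-\vec{b}_0)\cdot\vec{v}\leq 0$ for all $\vec{x}\in\overline{\mathcal{Q}}$. Applied to points of the form $\vec{x}=\vec{b}+r'\vec{v}\in B(\vec{b},r)$ with $r'<r$, this reads
\begin{equation}
r'-r\,\vec{u}\cdot\vec{v}=\bigl(\vec{b}+r'\vec{v}-\vec{b}_0\bigr)\cdot\vec{v}\leq 0,
\end{equation}
and letting $r'\to r^-$ gives $\vec{u}\cdot\vec{v}\geq 1$, so $\vec{v}=\vec{u}$. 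This simultaneously identifies the normal and shows that any supporting hyperplane at $\vec{b}_0$ has this same normal, proving uniqueness.

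Finally, from Theorem \ref{theoremNecSufCond} (or directly from its proof) one has $\sup_{\vec{z}\in\overline{\mathcal{Q}}}\vec{z}\cdot\vec{u}=S_{\vec{u}}$, and since $(\vec{z}-\vec{b}_0)\cdot\vec{u}\leq 0$ for all $\vec{z}\in\overline{\mathcal{Q}}$ with equality at $\vec{z}=\vec{b}_0$, we obtain $\vec{b}_0\cdot\vec{u}=S_{\vec{u}}$. The distance formula then follows from
\begin{equation}
d(\vec{b},\partial\mathcal{Q})=|\vec{b}_0-\vec{b}|=(\vec{b}_0-\vec{b})\cdot\vec{u}=S_{\vec{u}}-\vec{b}\cdot\vec{u}.
\end{equation}
The only subtle point is the identification step $\vec{v}=\vec{u}$, whose justification relies on the inclusion $\overline{B(\vec{b},r)}\subset\overline{\mathcal{Q}}$ obtained from the connectedness argument; everything else is routine convex analysis.
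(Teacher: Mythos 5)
Your proof is correct, and it takes a genuinely different route from the paper's. The paper argues by contradiction: it assumes $\vec{u}$ is \emph{not} normal to a supporting hyperplane at $\vec{b}_0$, takes a supporting hyperplane at $\vec{b}_0$ with some other normal $\vec{u}'\neq\vec{u}$, projects $\vec{b}$ orthogonally onto that hyperplane to get a point $\vec{x}\notin\mathcal{Q}$ with $|\vec{x}-\vec{b}|<|\vec{b}_0-\vec{b}|$, and then uses the segment from $\vec{b}$ to $\vec{x}$ to find a boundary point strictly closer than $\vec{b}_0$ — a contradiction; uniqueness is then dispatched ``by the same argument.'' You instead work directly: after establishing $B(\vec{b},r)\subset\mathcal{Q}$ by a connectedness argument, you test the supporting-hyperplane inequality $(\vec{x}-\vec{b}_0)\cdot\vec{v}\leq 0$ against the probe points $\vec{b}+r'\vec{v}$ inside the ball, obtaining $\vec{u}\cdot\vec{v}\geq r'/r\to 1$ and hence $\vec{v}=\vec{u}$. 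What your approach buys is economy: it identifies the normal constructively and establishes uniqueness simultaneously, since the argument applies to \emph{any} supporting normal $\vec{v}$ at $\vec{b}_0$, whereas the paper proves existence and uniqueness as two applications of the same projection/contradiction step. You also make explicit the detail that the projected point $\vec{x}$ is strictly closer than $\vec{b}_0$ unless $\vec{u}'=\vec{u}$ (which the paper leaves implicit), by replacing that with the cleaner limiting computation. Both proofs use a one-dimensional connectedness step (the paper on the segment $[\vec{b},\vec{x}]$, you inside the ball), so neither is more elementary, but yours is tighter and slightly more self-contained.
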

\begin{proof}
First it will be shown that $\vec{u}$ is normal to a supporting hyperplane at $\vec{b}_0$. Assume otherwise for the sake of contradiction. Then there exists some $\vec{u}'\in\mathbb{S}^2$, normal to a supporting hyperplane at $\vec{b}_0$, which must satisfy $\vec{u}\neq \vec{u}'$. Let $\vec{x} \in \mathbb{R}^k$ be the projection of $\vec{b}$ onto the hyperplane given by the set of all $\tilde{\vec{b}} \in \mathbb{R}^k$ with $\tilde{\vec{b}}\cdot \vec{u}'=S_{\vec{u}'}$, In particular, $\vec{x} \not\in\mathcal{Q}$. Furthermore, $|\vec{x}-\vec{b}|<|\vec{b}_0-\vec{b}|$. By considering the straight line from $\vec{b}$ to $\vec{x}$, there must be some $\vec{y} \in \partial\mathcal{Q}$ such that $|\vec{y}-\vec{b}|\leq |\vec{x}-\vec{b}| < |\vec{b}_0-\vec{b}|$, contradicting the assumption that $|\vec{b}_0-\vec{b}|=\min\limits_{\tilde{\vec{b}}\in\partial\mathcal{Q}}|\tilde{\vec{b}}-\vec{b}|$. Then it holds that 
\begin{equation}
\begin{split}
S_{\vec{u}}-\vec{b}\cdot \vec{u} & = (\vec{b}_0-\vec{b})\cdot \vec{u}\\
&=\frac{1}{|\vec{b}_0-\vec{b}|}(\vec{b}_0-\vec{b})\cdot (\vec{b}_0-\vec{b})\\
&=|\vec{b}_0-\vec{b}|=d(\vec{b},\partial\mathcal{Q}).
\end{split}
\end{equation}

To show that this is the only such hyperplane, assume that $\vec{u}_1\in\mathbb{S}^{k-1}$, $\vec{u}_1\neq \vec{u}$ and $\vec{u}_1 \cdot \vec{b}_0 =S_{\vec{u}_1}$. By the same argument as before, by projecting $\vec{b}$ onto the supporting hyperplane with normal $\vec{u}_1$, it contradicts that $\vec{b}_0$ was a closest point on the boundary.\qed
\end{proof}

\begin{proposition}
\label{propConcentrateRho}
 Let $\vec{b} \in \mathcal{Q}$, $\vec{b}_0 \in \partial\mathcal{Q}$ with 
\begin{equation}|\vec{b}-\vec{b}_0|=\min\limits_{\tilde{\vec{b}}\in\partial\mathcal{Q}}|\vec{b}-\tilde{\vec{b}}|,
\end{equation} define $\vec{u}=\frac{1}{|\vec{b}_0-\vec{b}|}(\vec{b}_0-\vec{b})$ and $\epsilon^2=|\vec{b}_0-\vec{b}|$. Let $S_{\vec{u}}$, $E_\epsilon^{\vec{u}}$ be as in Def. \ref{defES}. Then $S_{\vec{u}}-\vec{b}\cdot \vec{u} =\epsilon^2$ and
\begin{equation}
\int_{E_\epsilon^{\vec{u}}}\rho(t)\,d\mu(t) \geq 1-\epsilon
\end{equation}
for all $\rho \in \mathcal{P}(X)$ generating $b$. 
\end{proposition}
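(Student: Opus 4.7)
The plan is to use the previous lemma to handle the first claim essentially for free, and then to exploit the defining inequality on $X\setminus E_\epsilon^{\vec{u}}$ to split the integral expressing $\vec{b}\cdot\vec{u}$ and extract a bound on the mass outside $E_\epsilon^{\vec{u}}$.

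First, I would observe that the identity $S_{\vec{u}} - \vec{b}\cdot\vec{u}=\epsilon^2$ is an immediate consequence of the preceding lemma: since $\vec{u}$ points from $\vec{b}$ to a nearest boundary point $\vec{b}_0$, the lemma gives $S_{\vec{u}}-\vec{b}\cdot\vec{u}=d(\vec{b},\partial\mathcal{Q})=|\vec{b}_0-\vec{b}|=\epsilon^2$. So the whole content of the proposition is the concentration inequality.

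The key computation would be as follows. Fix $\rho\in\mathcal{P}(X)$ generating $\vec{b}$ and write
\begin{equation}
\vec{b}\cdot\vec{u}=\int_X \vec{u}\cdot\vec{a}(t)\,\rho(t)\,d\mu(t)=\int_{E_\epsilon^{\vec{u}}}\vec{u}\cdot\vec{a}\,\rho\,d\mu+\int_{X\setminus E_\epsilon^{\vec{u}}}\vec{u}\cdot\vec{a}\,\rho\,d\mu.
\end{equation}
By the definition of $S_{\vec{u}}$ as an essential supremum, $\vec{u}\cdot\vec{a}(t)\leq S_{\vec{u}}$ for $\mu$-almost every $t\in X$, while by the contrapositive of the definition of $E_\epsilon^{\vec{u}}$, $\vec{u}\cdot\vec{a}(t)\leq S_{\vec{u}}-\epsilon$ on $X\setminus E_\epsilon^{\vec{u}}$. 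Writing $I=\int_{E_\epsilon^{\vec{u}}}\rho\,d\mu$, so that $\int_{X\setminus E_\epsilon^{\vec{u}}}\rho\,d\mu=1-I$, these pointwise bounds give
\begin{equation}
\vec{b}\cdot\vec{u}\leq S_{\vec{u}}\,I+(S_{\vec{u}}-\epsilon)(1-I)=S_{\vec{u}}-\epsilon(1-I).
\end{equation}

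Finally, combining this with the first part of the proposition, $S_{\vec{u}}-\vec{b}\cdot\vec{u}=\epsilon^2$, yields $\epsilon(1-I)\leq\epsilon^2$, hence $I\geq 1-\epsilon$, which is the desired inequality. I do not anticipate any real obstacle here; the only subtlety is making sure one uses the essentially sharp bound $\vec{u}\cdot\vec{a}\leq S_{\vec{u}}-\epsilon$ on the complement of $E_\epsilon^{\vec{u}}$ rather than the weaker global bound, since this is precisely what supplies the extra factor of $\epsilon$ that converts the geometric datum $|\vec{b}-\vec{b}_0|=\epsilon^2$ into the mass estimate $1-\epsilon$.
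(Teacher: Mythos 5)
Your proof is correct and takes essentially the same route as the paper: both use the preceding lemma for $S_{\vec{u}}-\vec{b}\cdot\vec{u}=\epsilon^2$, then split the integral representation of $\vec{b}\cdot\vec{u}$ over $E_\epsilon^{\vec{u}}$ and its complement, applying the bound $\vec{u}\cdot\vec{a}\leq S_{\vec{u}}$ a.e.\ and the sharper bound $\vec{u}\cdot\vec{a}\leq S_{\vec{u}}-\epsilon$ on $X\setminus E_\epsilon^{\vec{u}}$. The only cosmetic difference is that the paper writes the split directly in the form $0=\int_X(\vec{u}\cdot\vec{a}-S_{\vec{u}}+\epsilon^2)\rho\,d\mu$, which is algebraically the same rearrangement.
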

\begin{proof}
To see that $S_{\vec{u}}-\vec{b}\cdot\vec{u}=\epsilon^2$ is a result of the previous lemma, and observing that $S_{\vec{u}}-\vec{b}\cdot \vec{u}=(\vec{b}_0-\vec{b})\cdot \vec{u}$. Letting $E=E_\epsilon^{\vec{u}}$ for brevity, from the assumptions it can be seen that for any $\rho\in\mathcal{P}(\mathbb{S}^2)$ that generates $\vec{b}$,
\begin{equation}
\begin{split}
0&=\vec{u}\cdot \vec{b} - S_{\vec{u}}+\epsilon^2\\
&=\int_X \left(\vec{u}\cdot \vec{a}(t)-S_{\vec{u}}+ \epsilon^2\right)\rho(t)\,d\mu(t)\\
&=\int_E \left(\vec{u}\cdot \vec{a}(t)-S_{\vec{u}}+\epsilon^2\right)\rho(t)\,d\mu(t)+\int_{X\setminus E} \left(\vec{u}\cdot \vec{a}(t)-S_{\vec{u}}+\epsilon^2\right)\rho(t)\,d\mu(t)\\
&\leq \epsilon^2 \int_E \rho \,d\mu +(\epsilon^2-\epsilon)\int_{X\setminus E}\rho\,d\mu,
\end{split}
\end{equation}
using that for $t \in X$, $\vec{u}\cdot \vec{a}(t)-S_{\vec{u}}\leq 0$, and for $t \in X\setminus E$ that $ S_{\vec{u}}-\vec{u}\cdot \vec{a}(t)\geq \epsilon$. If $I=\int_E \rho\,d\mu$ for brevity, then this chain of inequalities gives that 
\begin{equation}
\begin{split}
0&\leq\epsilon^2 I +(\epsilon^2-\epsilon)(1-I)
\end{split}
\end{equation}
so that dividing through by $\epsilon$ and rearranging gives the desired result. \qed
\end{proof}

\begin{proposition}
\label{propUniformConvergenceSets}
For $\vec{u} \in\mathbb{S}^{k-1}$ and $\epsilon>0$, let $E^{\vec{u}}_\epsilon\subset X$ be as in Def. \ref{defES}. Then $\lim\limits_{\epsilon \to 0}\mu(E_{\epsilon}^{\vec{u}})=0$, where the convergence is uniform in $\vec{u}$. 
\end{proposition}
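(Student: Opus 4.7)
The plan is to argue by contradiction using the compactness of $\mathbb{S}^{k-1}$, combined with a Lipschitz-type comparison between the sets $E_\epsilon^{\vec{u}}$ for different directions and pointwise convergence for each fixed $\vec{u}$. Let $M = \esssup_{t \in X} |\vec{a}(t)|$, which is finite by the $L^\infty$ hypothesis on the $a_i$.

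First I would establish the comparison estimate that for any $\vec{u}_1, \vec{u}_2 \in \mathbb{S}^{k-1}$ and any $\epsilon > 0$,
\begin{equation}
E_\epsilon^{\vec{u}_1} \subset E_{\epsilon + 2M|\vec{u}_1 - \vec{u}_2|}^{\vec{u}_2}.
\end{equation}
This follows from the triangle inequality: $|S_{\vec{u}_1} - S_{\vec{u}_2}| \le M|\vec{u}_1 - \vec{u}_2|$ (since $\vec{u} \mapsto S_{\vec{u}}$ is $M$-Lipschitz as a supremum of $M$-Lipschitz functions) and $|(\vec{u}_1 - \vec{u}_2) \cdot \vec{a}(t)| \le M|\vec{u}_1 - \vec{u}_2|$ pointwise, so the defining inequality for $E_\epsilon^{\vec{u}_1}$ implies the one for $E_{\epsilon + 2M|\vec{u}_1 - \vec{u}_2|}^{\vec{u}_2}$.

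Next I would verify pointwise convergence. For fixed $\vec{u}$, the family $\{E_\epsilon^{\vec{u}}\}_{\epsilon > 0}$ is decreasing as $\epsilon \downarrow 0$, and
\begin{equation}
\bigcap_{\epsilon > 0} E_\epsilon^{\vec{u}} = \{t \in X : \vec{u} \cdot \vec{a}(t) \ge S_{\vec{u}}\} = \{t \in X : \vec{u} \cdot \vec{a}(t) = S_{\vec{u}}\} \text{ up to a null set},
\end{equation}
the last equality because $\vec{u} \cdot \vec{a}(t) \le S_{\vec{u}}$ almost everywhere. If this latter set had positive measure, then on it the identity $\vec{u} \cdot \vec{a}(t) - S_{\vec{u}} \cdot 1 = 0$ would be a non-trivial linear relation among $\{1, a_1, \ldots, a_k\}$ (non-trivial because $\vec{u} \neq \vec{0}$), contradicting the pseudo-Haar hypothesis. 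Hence the intersection has measure zero, and continuity of $\mu$ from above (using $\mu(X) < \infty$) gives $\mu(E_\epsilon^{\vec{u}}) \to 0$ as $\epsilon \to 0$.

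Finally, to upgrade this to uniformity, suppose for contradiction that there exist $\delta > 0$ and sequences $\epsilon_n \downarrow 0$ and $\vec{u}_n \in \mathbb{S}^{k-1}$ with $\mu(E_{\epsilon_n}^{\vec{u}_n}) \ge \delta$ for all $n$. By compactness of $\mathbb{S}^{k-1}$ pass to a subsequence with $\vec{u}_n \to \vec{u}_\ast$. Fix any $\epsilon > 0$; then eventually $\epsilon_n + 2M|\vec{u}_n - \vec{u}_\ast| < \epsilon$, so by the comparison estimate $E_{\epsilon_n}^{\vec{u}_n} \subset E_\epsilon^{\vec{u}_\ast}$, giving $\mu(E_\epsilon^{\vec{u}_\ast}) \ge \delta$. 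Letting $\epsilon \to 0$ contradicts the pointwise convergence at $\vec{u}_\ast$.

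The only real subtlety is the pseudo-Haar step, where one must carefully observe that $\vec{u} \cdot \vec{a}(t) = S_{\vec{u}}$ is genuinely a non-trivial combination of the augmented system $\{1, a_1, \ldots, a_k\}$ regardless of whether $S_{\vec{u}}$ vanishes; everything else is a routine compactness/monotone-measure argument.
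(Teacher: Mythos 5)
Your proof is correct and follows essentially the same route as the paper: pointwise convergence via the pseudo-Haar property and continuity of $\mu$ from above, then compactness of $\mathbb{S}^{k-1}$ plus a contradiction argument to upgrade to uniformity. The only difference is cosmetic — you package the key step (eventual containment $E_{\epsilon_n}^{\vec{u}_n} \subset E_\epsilon^{\vec{u}^*}$) as an explicit Lipschitz comparison estimate, whereas the paper invokes uniform convergence of the functions $f_j(t) = S_{\vec{u}_j} - \vec{u}_j\cdot\vec{a}(t) - \alpha_j$; your version is arguably cleaner and sidesteps a sign slip in the paper's statement ``$f(t) > -\epsilon$'' (which should read $f(t) < \epsilon$).
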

\begin{proof}
That $\lim\limits_{\epsilon \to 0} \mu(E_\epsilon^{\vec{u}})=0$ is a consequence of the pseudo-Haar property of the constraint functions. Since $E_\epsilon^{\vec{u}} = (S_{\vec{u}} -\vec{a}\cdot \vec{u})^{-1}\left((-\infty,\epsilon)\right)$, it holds that 
\begin{equation}
\begin{split}
\lim\limits_{\epsilon \to 0}\mu(E_\epsilon^{\vec{u}})&=\mu\left(\bigcap\limits_{\epsilon>0} E_\epsilon^{\vec{u}}\right)\\
&=\mu\left(\left\{t\in X:S_{\vec{u}}-\vec{u}\cdot \vec{a}(t)=0\right\}\right)=0
\end{split}
\end{equation} by the pseudo-Haar property.  

To see that the convergence is uniform, assume otherwise so that there exists some $\gamma > 0$, and $(\vec{u}_j)_{j\in \mathbb{N}}$, $(\alpha_j)_{j \in \mathbb{N}}$ so that the sets $E_j = E_{\alpha_j}^{\vec{u}_j}$ satisfy $\mu(E_j) \geq \gamma$ for all $j$, and $\lim\limits_{j \to \infty}\alpha_j =0$. For brevity, denote the functions $f_j(t) = S_{\vec{u}_j} - \vec{u}_j \cdot \vec{a}(t) - \alpha_j$. Up to a subsequence (not relabelled) it can be assumed that $\vec{u}_j \to \vec{u}^*$, and since the convergence of $\vec{u}_j\cdot \vec{a}(t) \to \vec{u}^*\cdot \vec{a}(t)$ is uniform, there is a corresponding $S^*=S_{\vec{u}^*}=\lim\limits_{j\to\infty}S_{\vec{u}_j}$. Let $\epsilon > 0$. Let $f=\lim\limits_{j\to\infty}f_j$. Then for sufficiently large $j$ since $f_j \to f$ uniformly, it must hold that if $t \in E_j$ then $f(t) >-\epsilon$. This means that $E_j \subset E_{\epsilon}^{\vec{u}^*}$ for sufficiently large $j$. Therefore for large $j$,
\begin{equation}
\begin{split}
\mu(E_j)\leq & \mu(E_\epsilon^{\vec{u}^*})\\
\Rightarrow \limsup\limits_{j \to \infty} \mu(E_j) \leq & \mu(E_\epsilon^{\vec{u}^*}).
\end{split}
\end{equation}
Since $\epsilon$ was arbitrary and $\lim\limits_{\epsilon \to 0}(E_\epsilon^{\vec{u}^*})=0$, this leads to the conclusion that $0 < \gamma \leq \limsup\limits_{j \to\ \infty} \mu(E_j) \leq 0$, a contradiction. \qed
\end{proof}

\section{The Singular Potential}
\label{sectionSingularPotential}
In order to define the singular potential, there are certain constraints on the objective function $\phi$ that are necessary to ensure that the minimisation problem is well posed and the resulting singular potential has desirable properties. Motivated by the Shannon entropy, $\phi(x)=x\ln x$ as the prototypical example, any function satisfying these constraints will be called {\it entropy-like}. 

\begin{definition}
\label{defEntropyLike}
Let $\phi:\mathbb{R}\to(-\infty,\infty]$. Define $\text{dom}(\phi)=\{x \in \mathbb{R}:\phi(x)<+\infty\}$. Then $\phi$ is entropy-like if and only if $\textup{int dom}(\phi)=(0,\infty)$, $\phi|_{(0,\infty)}$ is strictly convex, continuously differentiable, $\lim\limits_{x \to \infty}\frac{\phi(x)}{x}=+\infty$ and $\lim\limits_{x \to 0^+}\phi'(x)=-\infty$.
\end{definition}

In particular any entropy-like function is of Legendre type, so that $\phi'$ is a continuous bijection between $(0,\infty)$ and $\mathbb{R}$ \cite{rockafellar1997convex}. This definition does not place any restriction on the limiting value of $\phi$ at zero. It may hold that $\lim\limits_{x \to 0^+}\phi(x)$ is bounded, such as in the case of the Shannon entropy, or the limit may be infinite, such as for the example $\phi(x)=\frac{1}{x}+x^2$.

\begin{definition}
\label{defSingularPotential}
Let $\phi$ be an entropy-like function. Define the singular potential $\psi_s:\mathcal{Q}\to\mathbb{R}$ by 
\begin{equation}
\psi_s(\vec{b})=\min\limits_{\rho \in \mathcal{A}_{\vec{b}}}\int_X\phi(\rho)\,d\mu
\end{equation}
where the set $\mathcal{A}_{\vec{b}}$ is defined as
\begin{equation}
\mathcal{A}_{\vec{b}}=\left\{\rho \in\mathcal{P}(X):\int_X \rho(t)\vec{a}(t)\,d\mu(t)=\vec{b}\right\}.
\end{equation}
\end{definition}

Using the definition of the singular potential, the next results will be concerned with establishing some key properties of the function. 

\begin{proposition}
\label{propSingularPotentialWellDefined}
Let $\vec{b} \in \mathcal{Q}$. Then the singular potential, $\psi_s(\vec{b})$ is well defined in the sense that the minimisation problem admits solutions. Furthermore, the minimiser is unique, and $\psi_s$ is a strictly convex function. 
\end{proposition}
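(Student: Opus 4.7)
My plan is to establish the three claims --- existence, uniqueness, strict convexity --- in order, using the direct method of the calculus of variations for the first, strict convexity of $\phi$ for the second and third.

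First I would check that the infimum is finite, so that minimising sequences exist at all. By Ref.~\refcite[Theorem 2.9]{borwein1991duality}, already invoked in the proof of openness of $\mathcal{Q}$, the pseudo-Haar property yields a $\rho \in \mathcal{A}_{\vec{b}}$ that is essentially bounded both above and below away from $0$; continuity of $\phi$ on $(0,\infty)$ then makes $\int_X \phi(\rho)\,d\mu$ finite. Note also that $\phi$ is bounded below on $(0,\infty)$, since $\phi'$ is a continuous bijection between $(0,\infty)$ and $\mathbb{R}$ and so attains the value $0$ at a unique interior minimiser of $\phi$; this gives an a priori lower bound $\int_X \phi(\rho)\,d\mu \geq (\min \phi)\,\mu(X)$ for any $\rho$.

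Next, for existence of a minimiser, take a minimising sequence $(\rho_n) \subset \mathcal{A}_{\vec{b}}$. The superlinear growth condition $\phi(x)/x \to \infty$, together with the uniform bound on $\int_X \phi(\rho_n)\,d\mu$, yields equi-integrability of $(\rho_n)$ via the de la Vall\'ee Poussin criterion (after shifting $\phi$ by its lower bound). Since $\|\rho_n\|_{L^1} = 1$, the Dunford--Pettis theorem produces a subsequence, not relabelled, converging weakly in $L^1(X)$ to some $\rho^*$. The constraints defining $\mathcal{A}_{\vec{b}}$ are all linear and tested against functions in $L^\infty(X)$ (namely $1$ and each $a_i$), so they pass to the weak limit; non-negativity is also preserved, so $\rho^* \in \mathcal{A}_{\vec{b}}$. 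Since $\phi$ is convex and bounded below on $(0,\infty)$, the integral functional $\rho \mapsto \int_X \phi(\rho)\,d\mu$ is sequentially weakly lower semicontinuous on $L^1(X)$ by the standard theory of convex integrands, so $\int_X \phi(\rho^*)\,d\mu \leq \liminf_n \int_X \phi(\rho_n)\,d\mu = \psi_s(\vec{b})$, and $\rho^*$ is a minimiser.

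Uniqueness then follows from strict convexity of $\phi$: if two minimisers $\rho_1, \rho_2$ disagreed on a set of positive measure, convexity of $\mathcal{A}_{\vec{b}}$ would place $(\rho_1+\rho_2)/2$ back in $\mathcal{A}_{\vec{b}}$, and strict convexity of $\phi$ on that set would force $\int_X \phi\bigl((\rho_1+\rho_2)/2\bigr)\,d\mu < \psi_s(\vec{b})$, a contradiction. For strict convexity of $\psi_s$, given distinct $\vec{b}_1, \vec{b}_2 \in \mathcal{Q}$ and $\lambda \in (0,1)$, let $\rho_1, \rho_2$ be the unique minimisers on $\mathcal{A}_{\vec{b}_1}, \mathcal{A}_{\vec{b}_2}$; since $\int_X (\rho_1 - \rho_2)\vec{a}\,d\mu = \vec{b}_1 - \vec{b}_2 \neq \vec{0}$, necessarily $\rho_1 \neq \rho_2$ on a set of positive measure, so comparing $\psi_s(\lambda\vec{b}_1 + (1-\lambda)\vec{b}_2)$ with the admissible competitor $\lambda\rho_1 + (1-\lambda)\rho_2$ and using strict convexity of $\phi$ pointwise gives the strict Jensen inequality.

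The main obstacle is securing weak $L^1$ compactness of the minimising sequence --- $L^1$ is not reflexive, so boundedness alone is insufficient. Once the superlinearity of $\phi$ is routed through de la Vall\'ee Poussin--Dunford--Pettis, the remaining steps reduce to convexity arguments and the observation that the constraint functionals are weakly continuous on $L^1$ because $1$ and the $a_i$ lie in $L^\infty$.
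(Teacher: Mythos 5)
Your proof is correct, and for the existence part it takes a genuinely different route from the paper. The paper disposes of existence and uniqueness in a single line by citing Theorem 4.8 of Borwein and Lewis, which in fact delivers more than the proposition strictly requires: it also gives that the unique minimiser $\rho_{\vec{b}}$ is essentially bounded away from $0$ and $+\infty$, a fact the paper then leans on in Propositions~\ref{propFormRho}--\ref{propBToRhoContinuous} when it writes $\rho_{\vec{b}}=(\phi')^{-1}(\alpha+\lambda\cdot\vec{a})$ and differentiates the dual map. You instead run a self-contained direct-method argument: Theorem 2.9 of Borwein--Lewis only to secure a feasible $\rho$ with finite energy, a lower bound on $\phi$ from the Legendre structure, equi-integrability via de la Vall\'ee Poussin from the superlinear growth, Dunford--Pettis for weak $L^1$ compactness, weak closedness of the affine and non-negativity constraints against $L^\infty$ test functions, and sequential weak lower semicontinuity of the convex integrand. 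This is more elementary and more transparent as a proof of exactly what the proposition asserts, but it does not by itself recover the essential bounds on $\rho_{\vec{b}}$ that later results rely upon; for those the duality theorem (or Proposition~\ref{propFormRho}) is still needed. Your uniqueness and strict-convexity arguments coincide with the paper's: uniqueness from the pointwise strict convexity of $\phi$ together with convexity of $\mathcal{A}_{\vec{b}}$, and strict convexity of $\psi_s$ by testing the admissible competitor $\epsilon\rho_1+(1-\epsilon)\rho_2$ and noting that $\rho_1\neq\rho_2$ on a set of positive measure whenever $\vec{b}_1\neq\vec{b}_2$.
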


\begin{proof} The minimisation problem is a convex minimisation problem subject to linear constraints, with constraint functions that are pseudo-Haar. Together with the growth conditions imposed on $\phi$, this ensures the existence of a unique minimising $\rho$, with $\rho$ bounded away from $0$ and $+\infty$, so that $\psi_s(b)<+\infty$ \refcite[Theorem 4.8]{borwein1991duality}. To see convexity, first since $\mathcal{Q}$ is convex there is no problem in taking convex combinations of elements of $\mathcal{Q}$. Let $\vec{b}_1,\vec{b}_2 \in \mathcal{Q}$, $\epsilon \in [0,1]$. If $\rho_1,\rho_2$ solve the minimisation problems corresponding to $\vec{b}_1,\vec{b}_2$ respectively, then $\epsilon \rho_1 + (1-\epsilon)\rho_2 \in \mathcal{A}_{\epsilon \vec{b}_1 +(1-\epsilon)\vec{b}_2}$. Hence 
\begin{equation}
\begin{split}
\psi_s\left(\epsilon \vec{b}_1 + (1-\epsilon)\vec{b}_2\right) &= \min\limits_{\rho \in \mathcal{A}_{\epsilon \vec{b}_1 + (1-\epsilon)\vec{b}_2}} \int_X \phi(\rho(t)) \, d\mu(t)\\
&\leq \int_X \phi\left(\epsilon \rho_1(t) + (1-\epsilon)\rho_2(t)\right)\, d\mu(t)\\
&\leq \int_X \epsilon \phi(\rho_1(t)) + (1-\epsilon)\phi(\rho_2(t)) \, d\mu(t)\\
&= \epsilon\psi_s(\vec{b}_1)+(1-\epsilon)\psi_s(\vec{b}_2).
\end{split}
\end{equation}
Furthermore, note that the strict convexity of $\phi$ gives that for $\vec{b}_1 \neq \vec{b}_2$ and $\epsilon \in (0,1)$, $\rho_1\neq \rho_2$ on a set of positive measure, giving that this inequality is strict. \qed
\end{proof}

\begin{theorem}
There exists some $c>0$ such that for all $\vec{b} \in \mathcal{Q}$, $\vec{b}_0 \in \partial\mathcal{Q}$ with $\min\limits_{\tilde{\vec{b}}\in\partial\mathcal{Q}}|\tilde{\vec{b}}-\vec{b}|=|\vec{b}_0-\vec{b}|=\epsilon^2$ and $\epsilon<c$, the singular potential satisfies the inequality
\begin{equation}
\psi_s(\vec{b}) \geq \mu(E_\epsilon^{\vec{u}})\phi\left(\frac{1-\epsilon}{\mu(E_\epsilon^{\vec{u}})}\right) + \mu(X\setminus E_\epsilon^{\vec{u}})\phi\left(\frac{\epsilon}{\mu(X\setminus E_\epsilon^{\vec{u}})}\right),
\end{equation}
where $E_\epsilon^{\vec{u}}$ is as defined in Def. \ref{defES} and $\vec{u}=\frac{1}{|\vec{b}_0-\vec{b}|}(\vec{b}_0-\vec{b})$.
\end{theorem}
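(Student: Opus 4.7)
The plan is to use Jensen's inequality to reduce the integral $\int_X \phi(\rho)\,d\mu$ to a one-variable problem parametrised by the mass of $\rho$ on $E_\epsilon^{\vec{u}}$, then exploit Proposition \ref{propConcentrateRho} to control that mass and Proposition \ref{propUniformConvergenceSets} to harvest the constant $c$ uniformly in $\vec{u}$.

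First I would let $\rho \in \mathcal{A}_{\vec{b}}$ be the (unique) minimiser guaranteed by Proposition \ref{propSingularPotentialWellDefined}, write $E = E_\epsilon^{\vec{u}}$ for brevity, and set $I = \int_E \rho\,d\mu$, so $\int_{X\setminus E}\rho\,d\mu = 1-I$. Applying Jensen's inequality to $\phi$ on each of $E$ and $X\setminus E$ separately gives
\begin{equation}
\psi_s(\vec{b}) = \int_E \phi(\rho)\,d\mu + \int_{X\setminus E} \phi(\rho)\,d\mu \geq \mu(E)\,\phi\!\left(\tfrac{I}{\mu(E)}\right) + \mu(X\setminus E)\,\phi\!\left(\tfrac{1-I}{\mu(X\setminus E)}\right).
\end{equation}
Call the right-hand side $g(I)$. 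The target inequality is precisely $g(1-\epsilon)$, so it remains to show $g(I) \geq g(1-\epsilon)$, knowing from Proposition \ref{propConcentrateRho} that $I \geq 1-\epsilon$.

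Next I would argue monotonicity of $g$ on $[1-\epsilon,1]$. Since $\phi$ is continuously differentiable on $(0,\infty)$,
\begin{equation}
g'(I) = \phi'\!\left(\tfrac{I}{\mu(E)}\right) - \phi'\!\left(\tfrac{1-I}{\mu(X\setminus E)}\right),
\end{equation}
and $\phi'$ is strictly increasing by strict convexity. Hence $g'(I) \geq 0$ is equivalent to $I\,\mu(X\setminus E) \geq (1-I)\,\mu(E)$, which rearranges to $\mu(E) \leq I\,\mu(X)$. For $I \in [1-\epsilon,1]$ it suffices to check this at the left endpoint, giving the sufficient condition
\begin{equation}
\mu(E_\epsilon^{\vec{u}}) \leq (1-\epsilon)\,\mu(X).
\end{equation}

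Finally, Proposition \ref{propUniformConvergenceSets} states that $\mu(E_\epsilon^{\vec{u}}) \to 0$ uniformly in $\vec{u} \in \mathbb{S}^{k-1}$ as $\epsilon \to 0$, so I can pick $c > 0$ small enough that for all $\epsilon < c$ and all $\vec{u}$, the displayed bound on $\mu(E_\epsilon^{\vec{u}})$ holds. Combining the Jensen lower bound with the monotonicity $g(I) \geq g(1-\epsilon)$ for $I \in [1-\epsilon,1]$ then yields the stated inequality. The only subtle step is ensuring the smallness condition on $\mu(E_\epsilon^{\vec{u}})$ can be made independent of $\vec{u}$, which is exactly the content of Proposition \ref{propUniformConvergenceSets} and is really where the pseudo-Haar hypothesis pays off; everything else is a direct computation.
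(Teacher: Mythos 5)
Your proof is correct and follows the same broad strategy as the paper: take the unique minimiser $\rho_{\vec b}$, split the entropy integral over $E = E_\epsilon^{\vec u}$ and $X\setminus E$, apply Jensen's inequality on each piece, invoke Proposition \ref{propConcentrateRho} to bound $I=\int_E\rho\,d\mu$ from below, and use the uniform convergence from Proposition \ref{propUniformConvergenceSets} to choose $c$ independently of $\vec u$. Where you diverge is in the monotonicity step, and your version is a little cleaner. The paper compares each Jensen term with its target separately, using that $\phi$ is increasing for sufficiently large argument (to handle the $E$-term) and decreasing for sufficiently small argument (to handle the $X\setminus E$-term); this tacitly appeals to the unique interior critical point of $\phi$ and produces two constants $c_1,c_2$ whose size depends on where that critical point sits. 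You instead treat the Jensen lower bound as a single function $g(I)$, differentiate, and observe that $g'(I)\ge 0$ reduces, via the strict monotonicity of $\phi'$, to the purely combinatorial condition $\mu(E)\le I\,\mu(X)$; for $I\ge 1-\epsilon$ this becomes the single, $\phi$-independent sufficient condition $\mu(E_\epsilon^{\vec u})\le(1-\epsilon)\mu(X)$. This is a more transparent route to the same conclusion, and in principle gives a larger admissible $c$. One hairline point worth noting: at $I=1$ the argument $(1-I)/\mu(X\setminus E)$ of $\phi'$ is zero, outside the open interval on which $\phi$ is $C^1$; this is harmless because the optimal $\rho_{\vec b}$ is bounded away from zero, so the relevant $I$ is strictly less than $1$ and monotonicity on $[1-\epsilon,1)$ is all that is needed.
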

\begin{proof}
Let $\vec{b} \in \mathcal{Q}$, with corresponding $\vec{b}_0,\epsilon,\vec{u}$. Let $E=E_\epsilon^{\vec{u}}$ for brevity. From Prop. \ref{propConcentrateRho} it is known that $\int_E \rho\,d\mu>1-\epsilon$ for all $\rho$ with $\int_X\rho \vec{a}\,d\mu=\vec{b}$. In particular by taking $\rho\in\mathcal{A}_{\vec{b}}$ to be the unique minimiser given in Prop. \ref{propSingularPotentialWellDefined} so that $\psi_s(b)=\int_X\phi(\rho)\,d\mu$, then 
\begin{equation}
\begin{split}
\psi_s(b) &= \int_E \phi(\rho)\,d\mu + \int_{X\setminus E}\phi(\rho)\,d\mu\\
&\geq \mu(E)\phi\left(\frac{1}{\mu(E)}\int_E \rho\,d\mu\right)+\mu(X\setminus E)\phi\left(\frac{1}{\mu(X\setminus E)}\int_{X\setminus E}\rho\,d\mu\right)
\end{split}
\end{equation}
by applying Jensen's inequality. Since $\frac{1}{\mu(E)}\int_E \rho\,d\mu\geq\frac{1-\epsilon}{\mu(E)}$, using the uniform convergence of $\mu(E)\to 0$ in $u$ from Prop. \ref{propUniformConvergenceSets} and that $\phi$ is an increasing function for sufficiently large argument, it must hold that there exists some $c_1>0$ so that if $\epsilon < c_1$ then 
\begin{equation}
\phi\left(\frac{1}{\mu(E)}\int_E\rho\,d\mu\right)\geq \phi\left(\frac{1-\epsilon}{\mu(E)}\right).
\end{equation}
Similarly for $\epsilon < c_2$ since $\phi$ is decreasing for sufficiently small argument, it must hold that 
\begin{equation}\phi\left(\frac{1}{\mu(X\setminus E)}\int_{X\setminus E}\rho\,d\mu\right) \geq \phi\left(\frac{\epsilon}{\mu(X\setminus E)}\right).
\end{equation}
Combining these inequalities gives the lower bound.\qed
\end{proof}

\begin{corollary}
For $j \in \mathbb{N}$ let $\vec{b}_j \in \mathcal{Q}$ with $\lim\limits_{j \to \infty}\inf\limits_{\tilde{\vec{b}} \in \partial \mathcal{Q}}|\tilde{\vec{b}}-\vec{b}_j|=0$. Then $\lim\limits_{j \to \infty} \psi_s(b_j)=+\infty$. Furthermore the convergence is uniform  in $\inf\limits_{\tilde{\vec{b}} \in \partial Q}|\vec{b}-\vec{b}_j|$, so that 
\begin{equation}
\lim\limits_{\epsilon \to 0} \left(\inf\left\{\psi_s(\vec{b}):d(\vec{b},\partial\mathcal{Q})\leq \epsilon\right\}\right)=+\infty.
\end{equation}
\end{corollary}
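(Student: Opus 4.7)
The plan is to apply the preceding theorem to every $\vec{b}$ with $d(\vec{b}, \partial\mathcal{Q}) = \epsilon^2 \leq c^2$ and verify that its right-hand lower bound tends to $+\infty$ as $\epsilon \to 0$, uniformly in the direction $\vec{u}$ extracted from the nearest boundary point. The corollary will then follow by taking an infimum over all such $\vec{b}$.

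For the second term I would first record that any entropy-like $\phi$ is globally bounded below: since $\phi'$ is a continuous bijection between $(0,\infty)$ and $\mathbb{R}$ (as noted after Def. \ref{defEntropyLike}), there is a unique $x^\ast$ with $\phi'(x^\ast)=0$, which is the global minimum of $\phi$ by strict convexity. Writing $C_0 = -\phi(x^\ast)$, this yields
\[ \mu(X\setminus E_\epsilon^{\vec{u}})\,\phi\!\left(\tfrac{\epsilon}{\mu(X\setminus E_\epsilon^{\vec{u}})}\right) \geq -C_0\,\mu(X), \]
a bound independent of both $\epsilon$ and $\vec{u}$.

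For the first term I would set $y_\epsilon^{\vec{u}} := (1-\epsilon)/\mu(E_\epsilon^{\vec{u}})$ and rewrite the term as $(1-\epsilon)\,\phi(y_\epsilon^{\vec{u}})/y_\epsilon^{\vec{u}}$. Letting $M(\epsilon) := \sup_{\vec{u} \in \mathbb{S}^{k-1}} \mu(E_\epsilon^{\vec{u}})$, Prop. \ref{propUniformConvergenceSets} guarantees $M(\epsilon) \to 0$ as $\epsilon\to 0$, so that $y_\epsilon^{\vec{u}} \geq (1-\epsilon)/M(\epsilon) \to +\infty$ uniformly in $\vec{u}$. Combined with the superlinear growth $\phi(y)/y \to +\infty$ from Def. \ref{defEntropyLike}, this drives the first term to $+\infty$ uniformly in $\vec{u}$.

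Putting the two estimates together, for any $M > 0$ there exists $\delta > 0$ so that every $\vec{b}$ with $d(\vec{b}, \partial\mathcal{Q}) \leq \delta^2$ satisfies $\psi_s(\vec{b}) \geq M$, which is exactly the uniform blow-up statement; the pointwise claim $\psi_s(\vec{b}_j) \to +\infty$ is then an immediate specialisation to any sequence with $d(\vec{b}_j, \partial\mathcal{Q}) \to 0$. The main delicacy is precisely the uniformity in $\vec{u}$: absent Prop. \ref{propUniformConvergenceSets}, a sequence of awkward directions $\vec{u}_j$ could keep $\mu(E_\epsilon^{\vec{u}_j})$ bounded away from zero and short-circuit the blow-up of the first term. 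The uniform convergence in that proposition, which in turn leans on the pseudo-Haar hypothesis, removes this obstacle and reduces the corollary to bookkeeping with the growth conditions of Def. \ref{defEntropyLike}.
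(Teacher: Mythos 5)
Your proof is correct and follows essentially the same route as the paper: bound the second term by the global lower bound of $\phi$, rewrite the first term as $(1-\epsilon)\,\phi(\xi)/\xi$ with $\xi = (1-\epsilon)/\mu(E_\epsilon^{\vec{u}})$, and then use the uniform convergence $\mu(E_\epsilon^{\vec{u}}) \to 0$ from Prop. \ref{propUniformConvergenceSets} together with the superlinear growth of $\phi$. The only difference is that you explicitly justify why $\phi$ is bounded below (via the unique zero of $\phi'$), a fact the paper simply asserts.
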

\begin{proof}
Let $\vec{b} \in \mathcal{Q}$, and $E_\epsilon^{\vec{u}}=E$ for brevity. Using the lower bound for $\psi_s$, and that $\phi$ is bounded below, for some real constant $C$, independent of $\vec{b}$,
\begin{equation}
\psi_s(\vec{b})\geq \mu(E)\phi\left(\frac{1-\epsilon}{\mu(E)}\right)+C=(1-\epsilon)\frac{\phi(\xi)}{\xi}+C,
\end{equation}
where $\xi=\frac{1-\epsilon}{\mu(E)}$. Using that $\mu(E)\to 0$ uniformly as $\vec{b} \to \partial \mathcal{Q}$ and the superlinear growth of $\phi$ gives the required result. \qed
\end{proof}

The precise form of the minimiser can be obtained by considering the dual optimisation problem. For a more thorough account of duality the reader is referred to Ref. \refcite{rockafellar1997convex}, although for completeness a brief heuristic argument will be provided here. 

Let $X$ be some Banach space, $F:X\to\mathbb{R}$ be convex, and $T:X\to \mathbb{R}^k$ be continuous and linear. Let $\vec{b} \in \mathbb{R}^k$ and consider the minimisation problem 
\begin{equation}
\begin{split}
&\min\limits_{x \in X} F(x)\\
\text{subject to}\, \, \, &Tx=\vec{b}.
\end{split}
\end{equation}
Trivially, if $x$ does not satisfy $Tx=\vec{b}$, then $\max\limits_{\lambda \in \mathbb{R}^k} \left(-\lambda \cdot (Tx-\vec{b})\right)=+\infty$, and if $Tx=\vec{b}$, then $(-\lambda\cdot(Tx-\vec{b}))=0$ for all $\lambda \in \mathbb{R}^k$. Therefore the minimisation problem can be written as 
\begin{equation}\label{equationDuality}
\min\limits_{x \in X}\max\limits_{{\bf\lambda} \in \mathbb{R}^k} F(x)-\lambda\cdot(Tx-\vec{b}).
\end{equation}
Let $X^*$ be the dual space to $X$ with duality pairing $\langle \cdot,\cdot\rangle$, $T^*:\mathbb{R}^k \to X^*$ be the adjoint of $T$, and $F^*:X^*\to\mathbb{R}$ be the convex conjugate of $F$ defined by $F^*(x^*)=\sup\limits_{x \in X} \langle x^*,x\rangle-F(x)$. Assuming that the minimum and maximum in \eqref{equationDuality} can be exchanged, this minimisation problem can be written as 
\begin{equation}
\begin{split}
\max\limits_{\lambda \in \mathbb{R}^k}\min\limits_{x \in X} F(x)-\lambda\cdot(Tx-\vec{b}) &= \max\limits_{\lambda \in \mathbb{R}^k}\lambda \cdot \vec{b} -\left(\max\limits_{x \in X} \langle T^*\lambda, x\rangle - F(x)\right)\\
&=\max\limits_{\lambda \in \mathbb{R}^k} \lambda \cdot \vec{b} - F^*(T^*\lambda).
\end{split}
\end{equation}
If the interchange of minimum and maximum is permitted, then the dual problem 
\begin{equation}
\max\limits_{\lambda \in \mathbb{R}^k}\lambda \cdot \vec{b} - F^*(T^*\lambda)
\end{equation}
can provide information about the original so-called primal problem. Furthermore, since the minimisation is over a finite dimensional set without constraints, it may be easier to approach. This is what is done in the work of Borwein and Lewis \cite{borwein1991duality}, and their results will be exploited for the sake of this work. In particular the minimiser of the primal problem can be described in terms of maximiser of the dual problem. 

\begin{proposition}
\label{propFormRho}
Let $\phi: \mathbb{R} \to (-\infty,\infty]$ be an entropy-like function. Then for $\vec{b} \in \mathcal{Q}$, the unique minimiser of 
\begin{equation}
\min\limits_{\rho \in \mathcal{A}_{\vec{b}}} \int_X \phi(\rho) \, d\mu
\end{equation}
is achieved by 
\begin{equation}
\rho_{\vec{b}}(t) = (\phi')^{-1}\left(\alpha + \lambda \cdot \vec{a}(t)\right),
\end{equation}
where $\alpha \in \mathbb{R}$, $\lambda \in \mathbb{R}^k$ are the unique maximisers of the dual optimisation problem,
\begin{equation}
\max\limits_{\tilde{\alpha},\tilde{\lambda}}\, \tilde{\alpha} + \tilde{\lambda} \cdot \vec{b} - \int_X \phi^*\left(\tilde{\alpha} + \tilde{\lambda} \cdot \vec{a}(t)\right)\,d\mu.
\end{equation}
\end{proposition}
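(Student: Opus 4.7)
The plan is to directly invoke the Fenchel-type duality theorem for partially finite convex programs due to Borwein and Lewis \cite{borwein1991duality}, whose hypotheses are tailored exactly to the pseudo-Haar setting and entropy-like objectives considered here. The heuristic computation already given in the text indicates the correct formal Lagrangian manipulation, so the task reduces to justifying the interchange of min and max rigorously in the present setting.

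First, I would formulate the primal problem on $L^1(X)$ by extending $\phi$ to $(-\infty,0)$ by $+\infty$, so that the non-negativity constraint is encoded directly in the objective. The constraint operator $T:L^1(X)\to\mathbb{R}^{k+1}$ sending $\rho$ to $\left(\int_X \rho\,d\mu,\, \int_X \vec{a}(t)\rho(t)\,d\mu(t)\right)$ with target value $(1,\vec{b})$ puts the problem in the precise framework of Borwein-Lewis; multipliers $\tilde{\alpha}$ and $\tilde{\lambda}$ associated to the two constraint groups then produce, by a pointwise Fenchel computation, the dual functional stated in the proposition. The constraint qualification required by the Borwein-Lewis duality theorem is the existence of a feasible $\rho$ bounded away from zero and bounded above, and this is exactly what was invoked, via \cite[Theorem 2.9]{borwein1991duality} and \cite[Theorem 4.8]{borwein1991duality}, in the proof of Proposition \ref{propSingularPotentialWellDefined}. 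Consequently their theorem applies, yielding zero duality gap, attainment of the dual maximum, and the primal-dual relation
\begin{equation}
\rho_{\vec{b}}(t) = (\phi^*)'\bigl(\alpha + \lambda \cdot \vec{a}(t)\bigr).
\end{equation}

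To conclude, since any entropy-like function is of Legendre type (as noted immediately after Definition \ref{defEntropyLike}), we have $(\phi^*)' = (\phi')^{-1}$, and this produces the claimed expression for $\rho_{\vec{b}}$. Uniqueness of the primal minimiser has already been established in Proposition \ref{propSingularPotentialWellDefined}. Uniqueness of the dual maximiser follows from strict concavity of the dual objective: the pseudo-Haar property of $\{1\}\cup\{a_i\}_{i=1}^k$ guarantees that $(\tilde{\alpha},\tilde{\lambda})\mapsto \tilde{\alpha} + \tilde{\lambda}\cdot \vec{a}(\cdot)$ is injective modulo $\mu$-null sets, and combined with the strict convexity of $\phi^*$ on the interior of its domain this gives strict concavity of the dual objective wherever it is finite.

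The principal obstacle is not the duality computation itself, which is formal once the setup is in place, but rather the verification that all the hypotheses of the Borwein-Lewis duality theorem are indeed met in this framework — in particular the Slater-type constraint qualification and the integrability of $\phi^*(\tilde{\alpha} + \tilde{\lambda}\cdot\vec{a})$ on a neighbourhood of the optimal dual pair, so that the dual maximum is attained at an interior point where the first-order condition linking the dual maximiser to the primal minimiser is valid. All of these follow from the pseudo-Haar structure, the strictly positive feasible $\rho$ produced earlier, and the superlinear growth together with the steepness of $\phi$ at $0$ and $\infty$.
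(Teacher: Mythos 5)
Your proposal is correct and takes essentially the same route as the paper: the paper's entire proof is the single sentence ``This is a straightforward application of Ref.\ [Theorem~4.8 of Borwein--Lewis],'' and you are simply unfolding why the hypotheses of that theorem hold (pseudo-Haar structure, the strictly positive feasible $\rho$ from Theorem~2.9, Legendre-type identity $(\phi^*)'=(\phi')^{-1}$, and strict concavity of the dual objective for uniqueness). Your added detail on the constraint qualification and on dual uniqueness is accurate and fills in what the paper leaves implicit.
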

\begin{proof}
This is a straightforward application of Ref. \refcite[Theorem 4.8]{borwein1991duality}, .\qed
\end{proof}

The next stage is to establish smoothness properties of the singular potential. In order to show differentiability of $\psi_s$, the key point is to observe that the map from the Lagrange multipliers to the moments $\vec{b}$ is explicitly given and invertible. As such the differentiability of $\psi_s$ is inherited from the explicit map from the Lagrange multipliers to $\mathcal{Q}$.

\begin{proposition}
\label{propSingularPotSmooth} Let $\phi: \mathbb{R} \to (-\infty,\infty]$ be entropy-like, $C^m$ on $(0,\infty)$ for $m \geq 2$, with $\phi''$ positive everywhere. Then the map $\vec{b} \mapsto(\alpha,\lambda)$ is $C^{m-1}$, where $\alpha,\lambda$ solve the dual optimisation problem.
\end{proposition}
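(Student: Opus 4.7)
The plan is to apply the inverse function theorem to the explicit parameter-to-moment map built from Prop.~\ref{propFormRho}. Define $G:\mathbb{R}\times\mathbb{R}^k\to\mathbb{R}^{k+1}$ by
\begin{equation}
G(\alpha,\lambda)=\int_X (\phi')^{-1}\!\bigl(\alpha+\lambda\cdot\vec{a}(t)\bigr)\begin{pmatrix}1\\ \vec{a}(t)\end{pmatrix}\,d\mu(t),
\end{equation}
so that $(\alpha,\lambda)$ solves the dual problem for $\vec{b}$ if and only if $G(\alpha,\lambda)=(1,\vec{b})$. It then suffices to show that $G$ is a $C^{m-1}$ diffeomorphism onto its image near any such point, which immediately yields that $\vec{b}\mapsto(\alpha,\lambda)$ is $C^{m-1}$.

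First I would check regularity of $G$. Since $\phi\in C^m((0,\infty))$ with $\phi''>0$, $\phi'$ is a strictly increasing $C^{m-1}$ diffeomorphism from $(0,\infty)$ onto $\mathbb{R}$, so $(\phi')^{-1}$ is $C^{m-1}$ with $((\phi')^{-1})'(s)=1/\phi''((\phi')^{-1}(s))$. To differentiate under the integral, note that for $(\alpha,\lambda)$ in a small neighbourhood the corresponding minimiser $\rho=(\phi')^{-1}(\alpha+\lambda\cdot\vec{a})$ takes values in a fixed compact subset of $(0,\infty)$ (using boundedness of $\vec{a}\in L^\infty$ and continuity of $(\phi')^{-1}$), so the integrand and all relevant derivatives are bounded by integrable functions on the finite measure space $X$; hence $G\in C^{m-1}$.

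Next I would compute the Jacobian. A direct differentiation gives
\begin{equation}
DG(\alpha,\lambda)=\int_X \frac{1}{\phi''(\rho(t))}\begin{pmatrix}1\\ \vec{a}(t)\end{pmatrix}\begin{pmatrix}1 & \vec{a}(t)^T\end{pmatrix}\,d\mu(t),
\end{equation}
which is the Gram matrix of $\{1,a_1,\ldots,a_k\}$ with strictly positive weight $1/\phi''(\rho)$. For any $\vec{v}=(v_0,v_1,\ldots,v_k)\in\mathbb{R}^{k+1}$,
\begin{equation}
\vec{v}^T DG(\alpha,\lambda)\vec{v}=\int_X \frac{1}{\phi''(\rho(t))}\Bigl(v_0+\sum_{i=1}^k v_i a_i(t)\Bigr)^2 d\mu(t)\geq 0,
\end{equation}
with equality only if $v_0+\sum_i v_i a_i=0$ $\mu$-almost everywhere. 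Since $\{1,a_1,\ldots,a_k\}$ is pseudo-Haar and in particular linearly independent, this forces $\vec{v}=0$. Hence $DG(\alpha,\lambda)$ is symmetric positive definite, and in particular invertible.

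By the inverse function theorem, $G$ is a local $C^{m-1}$ diffeomorphism at every point, so its local inverse is $C^{m-1}$. Since for each $\vec{b}\in\mathcal{Q}$ the pair $(\alpha,\lambda)$ is unique by Prop.~\ref{propFormRho}, these local inverses patch into a globally defined $C^{m-1}$ map $\vec{b}\mapsto(\alpha,\lambda)$, as required. The main technical step is the verification that the weighted Gram matrix is positive definite, for which the pseudo-Haar hypothesis on $\{1,a_1,\ldots,a_k\}$ is essential; the other potentially delicate point is justifying the interchange of differentiation and integration, but this is routine once one uses that $\rho$ is locally uniformly bounded away from $0$ and $\infty$.
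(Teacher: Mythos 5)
Your proof is correct and follows essentially the same approach as the paper: you define the same parameter-to-moment map (the paper calls it $h$), establish its $C^{m-1}$ regularity via the inverse function theorem applied to $\phi'$, identify its Jacobian as a weighted Gram matrix whose positive definiteness follows from the pseudo-Haar hypothesis, and then invoke the inverse function theorem together with global uniqueness of the dual pair. Your additional care in justifying differentiation under the integral (via local uniform boundedness of $\rho$ away from $0$ and $\infty$) is a worthwhile detail the paper leaves implicit.
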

\begin{proof}
For $(\alpha,\lambda) \in \mathbb{R} \times \mathbb{R}^k$, consider the map
\begin{equation}
h:(\alpha,\lambda) \mapsto \int_X \left(1,\vec{a}(t)\right)\left(\phi'\right)^{-1}\left(\alpha + \lambda \cdot \vec{a}(t) \right) \, d\mu(t) = (\beta,\vec{b}).
\end{equation}
Since $\phi''$ is positive, the inverse function theorem can be applied to $\left(\phi'\right)^{-1}$ , and the differentiability of $\phi$ gives that $\left(\phi'\right)^{-1}$ is $C^{m-1}$ on $\mathbb{R}$. In particular, this implies that $h \in C^{m-1}\left(\mathbb{R}^{k+1},\mathbb{R}^{k+1}\right)$. The Jacobian matrix of $h$ is given by 
\begin{equation}
\begin{split}
\label{eqdLamdB}
\frac{\partial h(\alpha,\lambda)}{\partial (\alpha,\lambda)} &= \int_X (1,\vec{a}(t))\otimes (1,\vec{a}(t)) \left(\phi''\circ \left(\phi'\right)^{-1}\left(\alpha+\lambda \cdot \vec{a}(t)\right) \right)^{-1} \, d\mu(t).
\end{split}
\end{equation}
Here $\circ$ denotes function composition. If this Jacobian matrix failed to be invertible, there would exist some $\xi \in \mathbb{R}^{k+1}$ with $\xi\neq 0$ so that 
\begin{equation}
0 = \int_X \left((1,\vec{a}(t))\cdot \xi\right)^2 \left(\phi''\circ \left(\phi'\right)^{-1}\left(\alpha+\lambda \cdot \vec{a}(t)\right) \right)^{-1} \, d\mu(t).
\end{equation}
Since $\phi''$ is positive everywhere, this implies that $\left(\xi \cdot (1,\vec{a}(t))\right)^2=0$ almost everywhere in $X$; however by the pseudo-Haar property this cannot hold, a contradiction. By applying the inverse function theorem to $h$, this gives that the map 
\begin{equation}
\left(\beta,\vec{b}\right) \mapsto (\alpha,\lambda)
\end{equation}
is $C^{m-1}$ for $(\beta,\vec{b})\in h(\mathbb{R}^{k+1})$. In particular, $h^{-1}(1,\vec{b})$, which maps $\vec{b}\in\mathcal{Q}$ to $(\alpha,\lambda)$ is also a $C^{m-1}$ function. Global invertibility is not problematic, since the existence and uniqueness of solutions for dual problem ensures the existence and unique pair $(\alpha,\lambda)$ for each $\vec{b} \in \mathcal{Q}$, and non-existence otherwise.\qed
\end{proof}

\begin{proposition}
\label{propSingularFirstDerivative}
Let $\phi$ be a $C^2$ entropy-like function with $\phi''$ positive. Then for $\vec{b} \in \mathcal{Q}$, $\frac{\partial \psi_s}{\partial \vec{b}}(\vec{b})=\lambda$, where $\lambda$ solves the dual optimisation problem corresponding to $\vec{b}$. 
\end{proposition}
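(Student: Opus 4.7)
The plan is to exploit strong duality together with the smoothness from Prop.~\ref{propSingularPotSmooth}, so that $\psi_s$ is an explicit expression in $(\alpha(\vec{b}), \lambda(\vec{b}), \vec{b})$ which can be differentiated by the chain rule. By the duality result cited in Prop.~\ref{propFormRho} (i.e.\ Ref.~\refcite[Theorem 4.8]{borwein1991duality}), strong duality holds, so
\begin{equation*}
\psi_s(\vec{b}) = \alpha(\vec{b}) + \lambda(\vec{b})\cdot \vec{b} - \int_X \phi^*\bigl(\alpha(\vec{b}) + \lambda(\vec{b})\cdot \vec{a}(t)\bigr)\,d\mu(t),
\end{equation*}
where $(\alpha(\vec{b}),\lambda(\vec{b}))$ is the unique dual maximiser. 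Prop.~\ref{propSingularPotSmooth} ensures that $\vec{b}\mapsto(\alpha,\lambda)$ is at least $C^1$, so all the terms above depend in a $C^1$ fashion on $\vec{b}$.

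First, I would justify differentiating under the integral sign. Since $\phi$ is entropy-like of Legendre type, $(\phi^*)'=(\phi')^{-1}$ is continuous on $\mathbb{R}$, and since $\alpha,\lambda$ vary continuously with $\vec{b}$ and $\vec{a}\in L^\infty(X)$, the argument $\alpha+\lambda\cdot \vec{a}(t)$ stays uniformly bounded on compact neighbourhoods of a fixed $\vec{b}$, giving an $L^\infty$ (hence $L^1$, since $\mu(X)<\infty$) dominating function. Hence
\begin{equation*}
\frac{\partial}{\partial b_i}\int_X \phi^*(\alpha+\lambda\cdot\vec{a}(t))\,d\mu(t) = \int_X (\phi^*)'(\alpha+\lambda\cdot \vec{a}(t))\left(\frac{\partial \alpha}{\partial b_i}+\frac{\partial \lambda}{\partial b_i}\cdot \vec{a}(t)\right)d\mu(t).
\end{equation*}

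Next, I would use the explicit form $\rho_{\vec{b}}(t)=(\phi')^{-1}(\alpha+\lambda\cdot \vec{a}(t))=(\phi^*)'(\alpha+\lambda\cdot \vec{a}(t))$ together with the admissibility conditions $\int_X \rho_{\vec{b}}\,d\mu=1$ and $\int_X \vec{a}\,\rho_{\vec{b}}\,d\mu=\vec{b}$. Applying the chain rule to the displayed expression for $\psi_s$ and substituting these two identities,
\begin{equation*}
\frac{\partial \psi_s}{\partial b_i} = \frac{\partial \alpha}{\partial b_i} + \lambda_i + \sum_{j}\frac{\partial \lambda_j}{\partial b_i}b_j - \frac{\partial \alpha}{\partial b_i}\int_X \rho_{\vec{b}}\,d\mu - \sum_j \frac{\partial \lambda_j}{\partial b_i}\int_X a_j\rho_{\vec{b}}\,d\mu,
\end{equation*}
and the $\partial\alpha/\partial b_i$ and $\partial\lambda_j/\partial b_i$ terms cancel in pairs, leaving $\partial \psi_s/\partial b_i=\lambda_i$ as required.

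I do not anticipate a serious obstacle here: the argument is essentially an envelope-theorem calculation, and everything needed (strong duality, smoothness of the dual variables, uniqueness of the minimiser, and the constraint identities $\int \rho_{\vec{b}}=1$, $\int \vec{a}\rho_{\vec{b}}=\vec{b}$) has already been established. The only point requiring care is the interchange of differentiation and integration, which is routine once one notes that $(\alpha,\lambda)$ depends continuously on $\vec{b}$ and $\vec{a}\in L^\infty(X)$, yielding a uniform bound on the integrand in a neighbourhood of any fixed $\vec{b}\in\mathcal{Q}$.
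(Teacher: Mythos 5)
Your proposal is correct and follows essentially the same route as the paper: write $\psi_s$ via strong duality as $\alpha + \lambda\cdot\vec{b} - \int_X \phi^*(\alpha+\lambda\cdot\vec{a})\,d\mu$, invoke Prop.~\ref{propSingularPotSmooth} for $C^1$ dependence of $(\alpha,\lambda)$ on $\vec{b}$, and differentiate so the $\partial\alpha/\partial b_i$ and $\partial\lambda/\partial b_i$ terms cancel against the constraint identities. You additionally spell out the dominated-convergence justification for differentiating under the integral, which the paper leaves implicit.
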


\begin{proof}
From Ref. \refcite{borwein1991duality} it is possible to write the singular potential in terms of the Lagrangian dual optimisation problem, so that for dual optimal $\alpha,\lambda$, the singular potential can be written as 
\begin{equation}
\psi_s(\vec{b})= \alpha + \lambda\cdot \vec{b} - \int_X \phi^*(\alpha+\lambda\cdot \vec{a}(t))\,d\mu(t)
\end{equation}
Using from Prop. \ref{propSingularPotSmooth} that the map $\vec{b}\mapsto (\alpha,\lambda)$ is at least $C^1$, differentiating through gives that 
\begin{equation}
\begin{split}
\frac{\partial \psi_s}{\partial \vec{b}} &=\frac{\partial \alpha}{\partial \vec{b}} + \frac{\partial \lambda}{\partial \vec{b}}\cdot \vec{b} +\lambda- \int_X (\phi')^{-1}\left(\alpha + \lambda \cdot \vec{a}(t)\right) \left( \frac{\partial \alpha}{\partial \vec{b}}+\frac{\partial \lambda}{\partial \vec{b}}\cdot \vec{a}(t) \right) \,d\mu(t)\\
&=\lambda + \frac{\partial \alpha}{\partial \vec{b}}\left(1-\int_X\rho_{\vec{b}}(t)\,d\mu(t)\right)+\frac{\partial \lambda}{\partial \vec{b}}\cdot\left(\vec{b}-\int_X \rho_{\vec{b}}(t) \vec{a}(t)\,d\mu(t)\right)\\
&=\lambda .
\end{split}
\end{equation}\qed
\end{proof}

\begin{corollary}
\label{corollarySingularCM}
If $\phi$ is entropy-like, $C^m$ for $m\geq 2$ on $(0,\infty)$ and $\phi''$ is positive, then $\psi_s\in C^m(\mathcal{Q},\mathbb{R})$. 
\end{corollary}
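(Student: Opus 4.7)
The proof is essentially an immediate combination of the two preceding propositions, so the plan is short. The key observation is that Prop. \ref{propSingularFirstDerivative} identifies the gradient of $\psi_s$ explicitly as the dual optimal Lagrange multiplier $\lambda$, while Prop. \ref{propSingularPotSmooth} asserts that the map $\vec{b} \mapsto (\alpha, \lambda)$ is $C^{m-1}$ under the hypotheses on $\phi$.

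First I would record that $\psi_s$ is finite on the open convex set $\mathcal{Q}$ (from Prop. \ref{propSingularPotentialWellDefined}), hence continuous. Next, applying Prop. \ref{propSingularFirstDerivative}, which only requires $\phi \in C^2$ with $\phi'' > 0$ (a weaker hypothesis than $C^m$), one obtains
\begin{equation}
\nabla \psi_s(\vec{b}) = \lambda(\vec{b}) \quad \text{for all } \vec{b} \in \mathcal{Q},
\end{equation}
where $\lambda(\vec{b})$ is the Lagrange multiplier component of the unique dual optimiser associated with $\vec{b}$. Then invoking Prop. \ref{propSingularPotSmooth}, the assignment $\vec{b} \mapsto \lambda(\vec{b})$ inherits $C^{m-1}$ regularity from the inverse function theorem applied to the map $h$ defined there. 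Consequently $\nabla \psi_s \in C^{m-1}(\mathcal{Q}, \mathbb{R}^k)$, which by definition gives $\psi_s \in C^m(\mathcal{Q}, \mathbb{R})$.

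There is essentially no obstacle; the only point worth checking is that the $C^2$ hypothesis needed to invoke Prop. \ref{propSingularFirstDerivative} to identify $\nabla \psi_s$ is subsumed by the $C^m$ hypothesis of the corollary (since $m \geq 2$), and that the positivity of $\phi''$ carries through so that Prop. \ref{propSingularPotSmooth} applies verbatim. The rest is bootstrapping regularity of $\psi_s$ from that of its gradient.
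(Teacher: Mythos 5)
Your proposal is correct and matches the paper's proof: both combine Prop.~\ref{propSingularFirstDerivative} (identifying $\nabla\psi_s=\lambda$) with Prop.~\ref{propSingularPotSmooth} (giving $\vec{b}\mapsto\lambda$ is $C^{m-1}$) to conclude $\psi_s\in C^m(\mathcal{Q},\mathbb{R})$. The only addition in your write-up is the explicit (and correct) remark that the $C^2$ hypothesis needed for Prop.~\ref{propSingularFirstDerivative} is subsumed by $m\geq2$.
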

\begin{proof}
Combining the results of Prop. \ref{propSingularPotSmooth} and \ref{propSingularFirstDerivative}, it holds that 
\begin{equation}\frac{\partial \psi_s}{\partial \vec{b}}=\lambda \in C^{m-1}(\mathcal{Q};\mathbb{R}^k),
\end{equation}
so that $\psi_s \in C^m(\mathcal{Q},\mathbb{R})$.\qed
\end{proof}

\begin{proposition}
\label{propSecondDerivative}
Assume $\phi$ is entropy-like, $C^m$ for $m\geq 2$ on the interior of its domain and that $\phi''$ is positive. Let $\vec{b}_0 \in \mathcal{Q}$, and denote $\rho=\rho_{\vec{b}_0}$, $\sigma = \frac{1}{\phi''(\rho)}$, $Z=\int_X \sigma \,d\mu$. and $\tilde{\sigma}=\frac{1}{Z}\sigma$. Then 
\begin{equation}
\frac{\partial\lambda}{\partial \vec{b}}(\vec{b}_0) = \frac{1}{Z}\left(\int_X \vec{a} \otimes \vec{a} \tilde{\sigma}\,d\mu-\int_X \vec{a} \tilde{\sigma}\,d\mu\otimes \int_X \vec{a}\tilde{\sigma}\,d\mu\right)^{-1}.
\end{equation}
In particular, if $\phi$ is the Shannon entropy $\phi(x)=x\ln x$, then 
\begin{equation}
\frac{\partial\lambda}{\partial \vec{b}}(\vec{b}_0) =\left(\int_X \vec{a} \otimes \vec{a} \rho\,d\mu-\vec{b}_0\otimes \vec{b}_0\right)^{-1}.
\end{equation}
\end{proposition}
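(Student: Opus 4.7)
The plan is to compute $\partial\lambda/\partial\vec{b}$ as a block of the inverse Jacobian of the map $h$ introduced in Prop. \ref{propSingularPotSmooth}, using the Schur complement formula.

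First I would substitute $\rho_{\vec{b}_0}(t) = (\phi')^{-1}(\alpha + \lambda\cdot\vec{a}(t))$ into the Jacobian expression \eqref{eqdLamdB}, which gives $(\phi''\circ(\phi')^{-1})(\alpha+\lambda\cdot\vec{a}) = \phi''(\rho)$, so that $((\phi''\circ(\phi')^{-1})(\alpha+\lambda\cdot\vec{a}))^{-1} = \sigma$. Thus
\begin{equation}
\frac{\partial h}{\partial(\alpha,\lambda)}(\alpha,\lambda) \;=\; \int_X (1,\vec{a}(t))\otimes(1,\vec{a}(t))\,\sigma(t)\,d\mu(t) \;=\; \begin{pmatrix} Z & Z\vec{p}^{\,T}\\ Z\vec{p} & ZR\end{pmatrix} =: M,
\end{equation}
where I write $\vec{p} = \int_X \vec{a}\,\tilde{\sigma}\,d\mu$ and $R = \int_X \vec{a}\otimes\vec{a}\,\tilde{\sigma}\,d\mu$ for brevity.

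Next I would note that, since the normalization constraint fixes the first component of $h(\alpha,\lambda)$ to equal $1$ (corresponding to $\rho \in \mathcal{P}(X)$), the derivative $\partial\lambda/\partial\vec{b}$ coincides with the lower-right $k\times k$ block of $M^{-1}$. Prop. \ref{propSingularPotSmooth} has already verified that $M$ is invertible (via the pseudo-Haar assumption and positivity of $\phi''$), so by the Schur complement formula applied with scalar upper-left entry $Z>0$,
\begin{equation}
\frac{\partial\lambda}{\partial\vec{b}}(\vec{b}_0) \;=\; \bigl(ZR - (Z\vec{p})\,Z^{-1}\,(Z\vec{p}^{\,T})\bigr)^{-1} \;=\; \frac{1}{Z}\bigl(R - \vec{p}\otimes\vec{p}\bigr)^{-1},
\end{equation}
which is exactly the claimed formula. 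The bracketed matrix is the covariance of $\vec{a}$ under the probability density $\tilde{\sigma}$, and its invertibility follows again from the pseudo-Haar property: any null direction would produce a non-trivial linear combination of the $a_i$ and the constant that vanishes $\tilde{\sigma}$-almost everywhere, and hence on a set of positive $\mu$-measure since $\tilde{\sigma}>0$ a.e.

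Finally, for the Shannon entropy specialisation $\phi(x)=x\ln x$ I would compute $\phi''(x) = 1/x$, so that $\sigma = \rho$ and $Z = \int_X\rho\,d\mu = 1$, giving $\tilde{\sigma}=\rho$ and $\vec{p} = \int_X \vec{a}\rho\,d\mu = \vec{b}_0$. Substituting into the general formula yields $(\int_X \vec{a}\otimes\vec{a}\,\rho\,d\mu - \vec{b}_0\otimes\vec{b}_0)^{-1}$. I do not anticipate any real obstacle here; the only care needed is in identifying $\partial\lambda/\partial\vec{b}$ with the correct block of $M^{-1}$ (with $\beta$ held fixed at $1$) and in the bookkeeping of the Schur complement.
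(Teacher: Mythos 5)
Your proof is correct and is essentially the same argument as the paper's: both compute $\partial\lambda/\partial\vec{b}$ by implicitly differentiating the constraint $h(\alpha,\lambda)=(1,\vec{b})$, inverting the resulting $(k+1)\times(k+1)$ linear system, and appealing to the pseudo-Haar condition and positivity of $\phi''$ for invertibility. The only difference is presentation: you extract the lower-right $k\times k$ block of $M^{-1}$ via the Schur complement formula, whereas the paper performs the same elimination by hand, first solving the normalisation row for $\partial\alpha/\partial b_i$ and substituting it into the moment rows.
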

\begin{proof}
Under the standing assumptions $\alpha,\lambda$ as functions of $\vec{b}$ are at least $C^1$. Following a similar argument to Prop. \ref{propSingularPotSmooth}, 
\begin{equation}
\begin{split}
1 &= \int_X \left(\phi'\right)^{-1}(\alpha+\lambda\cdot\vec{a})\,d\mu\\
\Rightarrow 0 &= \int_X \left(\frac{\partial \alpha}{\partial b_i} + \frac{\partial \lambda}{\partial b_i}\cdot \vec{a}\right)\sigma\,d\mu\\
&=Z\frac{\partial \alpha}{\partial b_i} + \int_X \vec{a}\sigma \,d\mu \cdot \frac{\partial \lambda}{\partial b_i}.
\end{split}
\end{equation}
Similarly, considering the relation between $\alpha,\lambda$ and $\vec{b}$ gives 
\begin{equation}
\begin{split}
b_i &= \int_X a_i \left(\phi'\right)^{-1}(\alpha+\lambda\cdot \vec{a})\,d\mu\\
\Rightarrow \delta_{ij} &= \int_X \left(a_i \frac{\partial \alpha}{\partial b_j} + a_i \frac{\partial \lambda}{\partial b_j}\cdot \vec{a}\right) \sigma\,d\mu\\
&=-\frac{1}{Z}\int_X a_i\sigma\,d\mu\int_X\vec{a}\sigma\,d\mu\cdot \frac{\partial \lambda}{\partial b_j}+\int_X a_i a\sigma\,d\mu\cdot \frac{\partial \lambda}{\partial b_j}\\
\Rightarrow I &=Z\left(\int_X \vec{a} \otimes \vec{a} \tilde{\sigma}\,d\mu - \int_X \vec{a} \tilde{\sigma}\,d\mu \otimes \int_X \vec{a}\tilde{\sigma}\,d\mu\right)\frac{\partial\lambda}{\partial \vec{b}}.
\end{split}
\end{equation}
Since $\phi''$, and therefore $\sigma$ is positive, and $\tilde{\sigma}=\frac{1}{\int_X \sigma \,d\mu}\sigma$, this gives that $\tilde{\sigma} \in \mathcal{P}(X)$, and furthermore due to the bounds on $\rho$, $\tilde{\sigma}$ is bounded away from zero. Therefore by use of the pseudo-Haar condition analogously to in Prop. \ref{propSingularPotSmooth}, the matrix 
\begin{equation}
Z\left(\int_X \vec{a} \otimes \vec{a} \tilde{\sigma}\,d\mu - \int_X \vec{a} \tilde{\sigma}\,d\mu \otimes \int_X \vec{a}\tilde{\sigma}\,d\mu\right)
\end{equation}
is invertible, so that taking its inverse gives the result. To see the case when $\phi$ is the Shannon entropy follows immediately since $\phi''(x)=\frac{1}{x}$, which gives $\sigma=\rho$, so $Z=1$ and $\tilde{\sigma}=\rho$.\qed
\end{proof}

\begin{proposition}
\label{propBToRhoContinuous}
Let $\phi: \mathbb{R} \to (-\infty,\infty]$ be an entropy-like function, $C^2$ with $\phi''$ positive on $(0,\infty)$. Then the map $F:\mathcal{Q} \to \mathcal{P}(X)$ given by $F(\vec{b})= \rho_{\vec{b}}$ is continuous with respect to the $L^\infty$ topology on $\mathcal{P}(X)$. 
\end{proposition}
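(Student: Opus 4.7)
The plan is to exploit the explicit representation of the minimizer obtained in Prop. \ref{propFormRho}, namely
\begin{equation*}
\rho_{\vec{b}}(t) = (\phi')^{-1}\bigl(\alpha(\vec{b}) + \lambda(\vec{b}) \cdot \vec{a}(t)\bigr),
\end{equation*}
and write $F$ as the composition of two continuous maps: the map $\vec{b} \mapsto (\alpha(\vec{b}), \lambda(\vec{b}))$ from $\mathcal{Q}$ to $\mathbb{R}^{k+1}$, and the evaluation map $(\alpha,\lambda) \mapsto (\phi')^{-1}(\alpha + \lambda \cdot \vec{a}(\cdot))$ from $\mathbb{R}^{k+1}$ to $L^\infty(X)$.

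First I would invoke Prop. \ref{propSingularPotSmooth}, which under the hypotheses $\phi \in C^2$ and $\phi'' > 0$ (taking $m = 2$) tells us that $\vec{b} \mapsto (\alpha,\lambda)$ is at least $C^1$, hence continuous. So given $\vec{b}_j \to \vec{b}_0$ in $\mathcal{Q}$, we get $(\alpha_j, \lambda_j) \to (\alpha_0, \lambda_0)$ in $\mathbb{R}^{k+1}$. Next, writing $g_j(t) = \alpha_j + \lambda_j \cdot \vec{a}(t)$, the elementary estimate
\begin{equation*}
\|g_j - g_0\|_{L^\infty(X)} \leq |\alpha_j - \alpha_0| + |\lambda_j - \lambda_0|\,\esssup\limits_{t \in X}|\vec{a}(t)|
\end{equation*}
shows $g_j \to g_0$ in $L^\infty(X)$. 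In particular, the essential ranges of all $g_j$ lie in a common bounded interval $K \subset \mathbb{R}$ for $j$ sufficiently large.

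Finally, since $\phi$ is entropy-like and $\phi'' > 0$ on $(0,\infty)$, the derivative $\phi'$ is a $C^1$ bijection from $(0,\infty)$ onto $\mathbb{R}$ with continuous inverse, so $(\phi')^{-1}$ is continuous on $\mathbb{R}$ and hence uniformly continuous on the compact set $K$. Therefore
\begin{equation*}
\bigl\|(\phi')^{-1}(g_j) - (\phi')^{-1}(g_0)\bigr\|_{L^\infty(X)} \longrightarrow 0,
\end{equation*}
which is precisely $\rho_{\vec{b}_j} \to \rho_{\vec{b}_0}$ in $L^\infty(X)$, establishing continuity of $F$.

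There is no serious obstacle here; the proof is essentially a composition of two continuous maps. The only point that requires mild care is confirming that the essential image of the affine functions $g_j$ remains inside a compact subset of $\mathbb{R}$ as $j \to \infty$, so that continuity of $(\phi')^{-1}$ can be upgraded to uniform continuity on that set, yielding uniform rather than merely pointwise convergence of $\rho_{\vec{b}_j}$.
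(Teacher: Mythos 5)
Your proof is correct and follows essentially the same route as the paper: decompose $F$ as $\vec{b}\mapsto(\alpha,\lambda)\mapsto(\phi')^{-1}(\alpha+\lambda\cdot\vec{a}(\cdot))$, invoke Prop.~\ref{propSingularPotSmooth} for continuity of the first factor, and confine the essential range of the affine functions to a compact interval so the second factor inherits uniform control. The only cosmetic difference is that the paper exploits local Lipschitz continuity of $(\phi')^{-1}$ (since it is $C^1$), whereas you use uniform continuity on compacts; both yield the same $L^\infty$ estimate.
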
 

\begin{proof}
From the argument in Prop. \ref{propSingularPotSmooth} it is sufficient to show that the map $G:(\alpha,\lambda) \mapsto \left(\phi'\right)^{-1}(\alpha + \lambda \cdot \vec{a})$ is continuous as a function from $\mathbb{R}^{k+1}$ to $L^\infty$. That $G(\alpha,\lambda) \in L^\infty$ is a consequence of $\left(\phi'\right)^{-1}$ being continuous and the inequality $|\alpha+\lambda\cdot \vec{a}(t) |<|\alpha| + |\lambda| \, ||a||_\infty$. To show continuity, fix $\alpha_0,\lambda_0$. Define $K = |\alpha_0| + |\lambda_0|\,||\vec{a}||_\infty$. Consider $\alpha,\lambda$ with $|\alpha-\alpha_0| + |\lambda - \lambda_0| \, ||\vec{a}||_\infty < \epsilon$ for $\epsilon >0$. Then 
\begin{equation}
\begin{split}
|\alpha + \lambda \cdot \vec{a}(t) | &= |\alpha - \alpha_0 + (\lambda - \lambda_0)\cdot \vec{a}(t) + \alpha_0 + \lambda_0\cdot \vec{a}(t) |\\
&\leq \epsilon + K.
\end{split}
\end{equation}
Since $\left(\phi'\right)^{-1}$ is $C^1$ on $\mathbb{R}$, it is Lipschitz on $[-(K+\epsilon),K+\epsilon]$ with Lipschitz constant $L$, say. Hence 
\begin{equation}
\begin{split}
\left|\left(\phi'\right)^{-1}(\alpha+\lambda \cdot \vec{a}(t)) - \left(\phi'\right)^{-1}(\alpha_0 + \lambda_0\cdot(\vec{a}(t))\right| &\leq L |\alpha-\alpha_0 + (\lambda - \lambda_0) \cdot \vec{a}(t)|\\
&\leq L\left(|\alpha-\alpha_0| + |\lambda-\lambda_0 |\, ||\vec{a}||_\infty\right)\\
&\leq L\epsilon.
\end{split}
\end{equation}\qed
\end{proof}

\section{Applications}
\label{sectionApplications}
For the remainder of this section, $\phi$ will be assumed to be entropy-like and $C^2$ with positive second derivative on $(0,\infty)$. In particular from Corollary \ref{corollarySingularCM} this implies $\psi_s$ is $C^2$. Given $\vec{b} \in \mathcal{Q}$, $\rho_{\vec{b}}$ will be as in Proposition \ref{propFormRho}, and given $\rho \in \mathcal{P}(X)$, define $\vec{b}_\rho=\int_X \vec{a}\rho\,d\mu$. Within this section all the examples of constraint functions can be identified with linearly independent sets of analytic functions on connected subsets of $\mathbb{R}^n$, with corresponding measure absolutely continuous with respect to the Lebesgue measure, so that by \cite{lewis1995consistencyReport}, the pseudo-Haar condition is satisfied.  

\subsection{Mean Field and Nonlinear Constraint Models}
\label{SubsecMeanField}
The aim of this section is to establish some elementary results on minimisation problems for certain functionals on $\mathcal{P}(X)$ by reducing the problem to a finite dimensional problem via the machinery of the singular potential. The key point is to establish equivalence between global minima, local minima and critical points of functionals on $\mathcal{P}(X)$ and global minima, local minima and critical points (respectively) of functions on $\mathcal{Q}$. Once this has been shown it is possible to establish standard results such as existence of minimisers by using simple tools of real analysis rather than having to resort to more complicated arguments based on functional analysis. Critical points of the functionals on $\mathcal{P}(X)$ will be expressed as implicit relations on the dual variables. This loosely provides a framework for dealing with such minimisation problems, whereby the model is derived as a problem in $\mathcal{P}(X)$ by some physical argument, then the singular potential allows analytical questions to be asked in the simpler language of real analysis as a problem in $\mathcal{Q}$, and then finally the dual variables provide a framework that is attractive from the point of view of numerical analysis.

Let $\emptyset \neq A \subset \mathcal{Q}$ be relatively closed in $\mathcal{Q}$. Let $f:\mathcal{Q}\to\mathbb{R}$ be continuous on $A$ and bounded from below, with $f|_{\mathcal{Q}\setminus A}=+\infty$. Define the functionals $\mathcal{I}_{\mathcal{P}}:\mathcal{P}(X)\to\mathbb{R}$ and $\mathcal{I}_{\mathcal{Q}}:\mathcal{Q}\to\mathbb{R}$ by 
\begin{equation}
\begin{split}
\mathcal{I}_{\mathcal{P}}(\rho)&=\int_X\phi(\rho)\,d\mu+f(\vec{b}_\rho),\\
\mathcal{I}_{\mathcal{Q}}(\vec{b})&=\psi_s(\vec{b})+f(\vec{b})=\mathcal{I}_{\mathcal{P}}(\rho_{\vec{b}}).
\end{split}
\end{equation}
Consider the minimisation problems (P1) and (P2) given by
\begin{equation}
\begin{split}
\text{(P1)}\,\,\,\,\, &\min\limits_{\rho \in \mathcal{P}(X)}\mathcal{I}_{\mathcal{P}}(\rho)\\
\text{(P2)}\,\,\,\,\, &\min\limits_{\vec{b} \in \mathcal{Q}}\mathcal{I}_{\mathcal{Q}}(\vec{b}).
\end{split}
\end{equation}
The aim of this section is to show equivalence of global minima, local minima and critical points for (P1) and (P2) and characterise such points. For applications (P1) will be obtained through a physical argument, and (P2) loosely speaking is a simpler, macroscopic, equivalent model. The motivation for the problem (P1)  comes from two different areas. Firstly one can see maximum entropy methods subject to nonlinear constraints as an example. Let $g:\mathcal{Q} \to \mathbb{R}$ be a continuous function. Consider the minimisation problem 
\begin{equation}
\begin{split}
\min\limits_{\rho \in \mathcal{P}(X)} \int_X \phi(\rho)\,d\mu\\
\text{subject to } g(\vec{b}_\rho)=0
\end{split}
\end{equation}

Define $f(\vec{b})=0$ if $g(\vec{b})=0$ and $f(\vec{b})=+\infty$ if $g(\vec{b})\neq 0$ so that $A=\{\vec{b} \in \mathcal{Q}:g(\vec{b})=0\}$, which by the continuity of $g$ is relatively closed. Then the minimisation problem 
$$\min\limits_{\rho \in \mathcal{P}(X)} \int_X \phi(\rho)\,d\mu + f(\vec{b}_\rho)$$
is equivalent to the nonlinear constraint model. Examples from the literature of such models are seen in statistical models of isotropic elasticity \cite{treloar1975physics}, where $X=\mathbb{S}^2$, $a_i(p)=p_i$ (in Cartesian coordinates) for $\vec{p}\in\mathbb{S}^2$ represents the orientation of molecules in a polymer chain and $b$ corresponds to the end to end vector spanned by the entire chain. The constraint in this case is of the form $|\vec{b}| = r$ for $r \in [0,1)$. Maximum entropy methods with to nonlinear constraints were also considered by Decarreau {\it et.al.}\cite{decarreau1992dual} as a method for dealing with the phase problem in crystallography. In diffraction experiments it is much simpler to observe the intensity of a wave than its phase, and mathematically this corresponds to having knowledge of the modulus of a Fourier coefficient representing the wave whilst its argument is unknown. For example, Decarreau {\it et al.} consider minimisation problems such as 
\begin{displaymath}
\left\{\begin{array}{l}
\min\limits_{\rho \in \mathcal{P}(\Omega)}\int_\Omega \phi(\rho(x))\,dx\\
\left(\int_\Omega \cos(n\cdot x)\rho(x)\,dx\right)^2 +\left(\int_\Omega \sin(n \cdot x)\rho(x)\,dx\right)^2 = m_n^2\,\,\,\forall n \in N
\end{array}\right. ,
\end{displaymath}
where $\Omega=[0,2\pi]$, $N$ is some finite subset of $\mathbb{N}$ and $m_n$ are given real numbers.

A second example of problems of the form (P1) are given by the mean field approximation. Given some state space $X$ and constraint functions $(a_i)_{i=1}^k$, the free energy is typically of the form 
\begin{equation}
T\int_X \phi(\rho)\,d\mu -\frac{1}{2}\tens{K}\vec{b}_\rho\cdot \vec{b}_\rho -\vec{H}\cdot \vec{b}_\rho
\end{equation}
where $T>0$ represents temperature, $\tens{K}$ is a positive definite $k\times k$ matrix that represents some kind of interaction potential, and $\vec{H} \in \mathbb{R}^k$ is representative of some kind of external influence such as magnetic/electric fields. In practice the Shannon entropy $\phi(x)=x \ln x$ is almost exclusively used, although for this analysis this is not necessary. 

In the literature these minimisation problems often are not dealt with rigorously, and in particular the issue of non-differentiability of the objective function $\phi$ at $0$ is often neglected, so that the given solution is obtained by considering only points where the first variation of the free energy is zero. The following results will aim to show that under rather non-restrictive assumptions the minimisation problems are well posed and the solutions behave as one would expect with a non-rigorous analysis. Through use of the singular potential the minimisation problem can be reduced to an analytically simpler but equivalent problem (P2) in finite dimensions, for which one can easily obtain the desired results.

\begin{proposition}
\label{propGlobalMinsEquiv}
There exists global minimisers $\vec{b}^*$ and $\rho^*$ of $\mathcal{I}_{\mathcal{Q}}$ and $\mathcal{I}_{\mathcal{P}}$ respectively, and all global minimisers of $\mathcal{I}_{\mathcal{Q}}$ are in one-to-one correspondence with global minimisers of $\mathcal{I}_{\mathcal{P}}$ via the map $\vec{b}\mapsto \rho_{\vec{b}}$. In particular, global minimisers of $\mathcal{I}_{\mathcal{P}}$ are bounded away from zero and infinity.
\end{proposition}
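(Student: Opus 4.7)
The plan is to exploit the identity $\mathcal{I}_{\mathcal{Q}}(\vec{b}) = \mathcal{I}_{\mathcal{P}}(\rho_{\vec{b}})$ together with the variational inequality $\mathcal{I}_{\mathcal{P}}(\rho) \geq \mathcal{I}_{\mathcal{Q}}(\vec{b}_\rho)$, which holds by the very definition of $\psi_s$ as the minimum over $\mathcal{A}_{\vec{b}_\rho}$. These two facts immediately give $\inf_{\mathcal{P}(X)} \mathcal{I}_{\mathcal{P}} = \inf_{\mathcal{Q}} \mathcal{I}_{\mathcal{Q}}$, so the problem reduces to producing a minimiser of the finite-dimensional problem (P2) and then transferring it back.

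First I would establish existence for (P2) by the direct method. Take a minimising sequence $\vec{b}_n \in \mathcal{Q}$; since $f$ is bounded below, values along the sequence must be finite, so $\vec{b}_n \in A$. Boundedness of $\mathcal{Q}$ lets us extract a subsequence converging to some $\vec{b}^* \in \overline{\mathcal{Q}}$. The blow-up corollary for $\psi_s$ rules out $\vec{b}^* \in \partial\mathcal{Q}$, because otherwise $\psi_s(\vec{b}_n) \to +\infty$ together with the lower bound on $f$ would force $\mathcal{I}_{\mathcal{Q}}(\vec{b}_n) \to +\infty$, contradicting the assumed existence of a competitor in $A$. Hence $\vec{b}^* \in \mathcal{Q}$, and since $A$ is relatively closed in $\mathcal{Q}$, $\vec{b}^* \in A$. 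Continuity of $\psi_s$ on $\mathcal{Q}$ (from Corollary \ref{corollarySingularCM}) and of $f$ on $A$ then yield $\mathcal{I}_{\mathcal{Q}}(\vec{b}^*) = \lim_n \mathcal{I}_{\mathcal{Q}}(\vec{b}_n) = \inf \mathcal{I}_{\mathcal{Q}}$.

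Next I would set $\rho^* = \rho_{\vec{b}^*}$ and observe
\begin{equation}
\mathcal{I}_{\mathcal{P}}(\rho^*) = \mathcal{I}_{\mathcal{Q}}(\vec{b}^*) = \inf_{\mathcal{Q}} \mathcal{I}_{\mathcal{Q}} \leq \inf_{\mathcal{P}(X)} \mathcal{I}_{\mathcal{P}} \leq \mathcal{I}_{\mathcal{P}}(\rho^*),
\end{equation}
using $\mathcal{I}_{\mathcal{P}}(\rho) \geq \mathcal{I}_{\mathcal{Q}}(\vec{b}_\rho) \geq \inf \mathcal{I}_{\mathcal{Q}}$ for the middle inequality. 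So $\rho^*$ minimises $\mathcal{I}_{\mathcal{P}}$ and the two infima coincide. The bijective correspondence is then straightforward: if $\vec{b}$ minimises $\mathcal{I}_{\mathcal{Q}}$, the same chain shows $\rho_{\vec{b}}$ minimises $\mathcal{I}_{\mathcal{P}}$; conversely, if $\tilde\rho$ minimises $\mathcal{I}_{\mathcal{P}}$ and $\tilde{\vec{b}} = \vec{b}_{\tilde\rho}$, the equality case of $\mathcal{I}_{\mathcal{P}}(\tilde\rho) \geq \psi_s(\tilde{\vec{b}}) + f(\tilde{\vec{b}})$ forces $\int_X \phi(\tilde\rho)\,d\mu = \psi_s(\tilde{\vec{b}})$, and the uniqueness clause in Proposition \ref{propSingularPotentialWellDefined} gives $\tilde\rho = \rho_{\tilde{\vec{b}}}$. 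This shows the map $\vec{b} \mapsto \rho_{\vec{b}}$ is a bijection between the sets of global minimisers; its injectivity is automatic from $\vec{b}_{\rho_{\vec{b}}} = \vec{b}$. Finally, the bounds away from $0$ and $\infty$ for any global minimiser follow since every such minimiser is of the form $\rho_{\vec{b}^*} = (\phi')^{-1}(\alpha + \lambda\cdot\vec{a})$ by Proposition \ref{propFormRho}, and $\alpha + \lambda\cdot\vec{a} \in L^\infty(X)$ together with continuity of $(\phi')^{-1}$ on $\mathbb{R}$ does the rest.

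The only non-routine step is verifying that the minimising sequence in $\mathcal{Q}$ cannot drift to $\partial\mathcal{Q}$; this is precisely where the uniform blow-up of $\psi_s$ near $\partial\mathcal{Q}$ plays its essential role, everything else being bookkeeping on the identity $\mathcal{I}_{\mathcal{Q}} = \mathcal{I}_{\mathcal{P}} \circ (\vec{b} \mapsto \rho_{\vec{b}})$ and the uniqueness of $\rho_{\vec{b}}$.
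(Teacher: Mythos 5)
Your proof is correct and follows essentially the same route as the paper's: establish that the infima of $\mathcal{I}_{\mathcal{P}}$ and $\mathcal{I}_{\mathcal{Q}}$ coincide, obtain a minimiser of the finite-dimensional problem via the direct method together with the blow-up of $\psi_s$ near $\partial\mathcal{Q}$, and then transfer back through $\vec{b}\mapsto\rho_{\vec{b}}$ using the uniqueness of the maximal-entropy representative. You simply spell out the direct-method argument and the equality chain in more detail than the paper's terser version, and you cite Proposition \ref{propFormRho} for the boundedness claim rather than leaning directly on the clause in Proposition \ref{propSingularPotentialWellDefined}, but the substance is identical.
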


\begin{proof}
From the assumptions on $\phi,f$, it is immediate that these functionals are bounded below. Furthermore, their infima coincide, since 
\begin{equation}
\begin{split}
\inf\limits_{\rho \in \mathcal{P}(X)}\mathcal{I}_{\mathcal{P}}(\rho) &= \inf\limits_{\vec{b} \in \mathcal{Q}}\left(\inf\limits_{\rho \in \mathcal{A}_{\vec{b}}} \mathcal{I}_{\mathcal{P}}(\rho)\right)\\
&=\inf\limits_{\vec{b} \in \mathcal{Q}}\mathcal{I}_{\mathcal{Q}}(\vec{b}).
\end{split}
\end{equation}
Since $\mathcal{Q}$ is a precompact set, $\mathcal{I}_{\mathcal{Q}}$ is continuous on the relatively closed set $A$, and $\lim\limits_{\vec{b} \to \partial\mathcal{Q}}\mathcal{I}_{\mathcal{Q}}(\vec{b})=+\infty$, it must hold that a minimum exists for $\mathcal{I}_{\mathcal{Q}}$, with corresponding minimiser $\vec{b}^*$. Furthermore, since $\mathcal{I}_{\mathcal{P}}(\rho_{\vec{b}^*})=\mathcal{I}_{\mathcal{Q}}(\vec{b}^*)$, the minimum of $\mathcal{I}_{\mathcal{P}}$ must be attained at $\rho_{\vec{b}^*}$. Finally, if $\rho^*$ is a global minimiser for $\mathcal{I}_{\mathcal{P}}$, then it must hold that $\rho^*=\rho_{\vec{b}^*}$ for $\vec{b}^*=\vec{b}_{\rho^*}$, since otherwise $\mathcal{I}_{\mathcal{P}}(\rho^*)>\mathcal{I}_{\mathcal{Q}}(\vec{b}^*)=\mathcal{I}_{\mathcal{P}}(\rho_{\vec{b}^*})$. \qed
\end{proof}

\begin{proposition}
\label{propLocalMins}
There is a one-to-one correspondence between local minimisers of $\mathcal{I}_{\mathcal{Q}}$ and $L^1$-local minimisers of $\mathcal{I}_{\mathcal{P}}$, given by the map $\vec{b}\mapsto \rho_{\vec{b}}$, so that $\rho^* \in \mathcal{P}(X)$ is an $L^1$-local minimiser if and only if $\vec{b}_{\rho^*} \in \mathcal{Q}$ is a local minimiser, and vice versa. In particular, all local minimisers of $\mathcal{I}_{\mathcal{P}}$ are bounded away from zero and infinity. The equivalence also holds for strict local minimisers.
\end{proposition}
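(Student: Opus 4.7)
The plan is to establish both directions of the correspondence by exploiting three structural facts: the pointwise inequality $\mathcal{I}_{\mathcal{P}}(\rho)\geq \mathcal{I}_{\mathcal{Q}}(\vec{b}_\rho)$ with equality when $\rho=\rho_{\vec{b}_\rho}$, the Lipschitz-type bound $|\vec{b}_\rho-\vec{b}_{\rho'}|\leq \|\vec{a}\|_\infty\|\rho-\rho'\|_{L^1}$, and the continuity of $\vec{b}\mapsto \rho_{\vec{b}}$ in $L^\infty$ (hence in $L^1$, since $\mu(X)<\infty$) from Prop.~\ref{propBToRhoContinuous}.

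For the direction from $\mathcal{Q}$ to $\mathcal{P}(X)$, I would fix a local minimiser $\vec{b}^*$ of $\mathcal{I}_{\mathcal{Q}}$ and set $\rho^*=\rho_{\vec{b}^*}$. For $\rho\in\mathcal{P}(X)$ with $\|\rho-\rho^*\|_{L^1}$ small, the Lipschitz bound keeps $\vec{b}_\rho$ inside the neighbourhood of $\vec{b}^*$ on which $\mathcal{I}_{\mathcal{Q}}$ is minimised, and the chain $\mathcal{I}_{\mathcal{P}}(\rho)\geq \mathcal{I}_{\mathcal{Q}}(\vec{b}_\rho)\geq \mathcal{I}_{\mathcal{Q}}(\vec{b}^*)=\mathcal{I}_{\mathcal{P}}(\rho^*)$ gives the local minimality. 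For strictness, if $\rho\neq\rho^*$ then either $\vec{b}_\rho\neq \vec{b}^*$ (and strict local minimality in $\mathcal{Q}$ gives strict inequality) or $\vec{b}_\rho=\vec{b}^*$ (and the uniqueness of the minimiser in $\mathcal{A}_{\vec{b}^*}$ from Prop.~\ref{propSingularPotentialWellDefined} gives $\int_X\phi(\rho)\,d\mu>\psi_s(\vec{b}^*)$).

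The reverse direction requires first showing that an $L^1$-local minimiser $\rho^*$ necessarily equals $\rho_{\vec{b}^*}$, where $\vec{b}^*=\vec{b}_{\rho^*}$. I would argue by contradiction: if $\rho^*\neq \rho_{\vec{b}^*}$ on a set of positive measure, then for $t\in(0,1]$ the convex combination $\rho_t=(1-t)\rho^*+t\rho_{\vec{b}^*}$ still lies in $\mathcal{A}_{\vec{b}^*}$, is $L^1$-close to $\rho^*$ for small $t$, and by the strict convexity of $\phi$ combined with $\int_X\phi(\rho_{\vec{b}^*})\,d\mu\leq \int_X\phi(\rho^*)\,d\mu$ satisfies $\int_X\phi(\rho_t)\,d\mu<\int_X\phi(\rho^*)\,d\mu$. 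Since $f(\vec{b}_{\rho_t})=f(\vec{b}^*)$, this contradicts $L^1$-local minimality. This step is the technical heart of the proof; the key subtlety is checking that the strict pointwise convexity of $\phi$ promotes to strict inequality under the integral, which follows from the assumption that $\rho^*$ and $\rho_{\vec{b}^*}$ differ on a positive measure set. Once $\rho^*=\rho_{\vec{b}^*}$ is established, boundedness away from $0$ and $\infty$ is inherited from the explicit form $\rho_{\vec{b}^*}=(\phi')^{-1}(\alpha+\lambda\cdot\vec{a})$ in Prop.~\ref{propFormRho} together with $\vec{a}\in L^\infty$.

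To conclude, I would show $\vec{b}^*$ is a local minimiser of $\mathcal{I}_{\mathcal{Q}}$ by a further contradiction argument: any sequence $\vec{b}_n\to \vec{b}^*$ in $A$ with $\mathcal{I}_{\mathcal{Q}}(\vec{b}_n)<\mathcal{I}_{\mathcal{Q}}(\vec{b}^*)$ yields $\rho_n=\rho_{\vec{b}_n}\to \rho^*$ in $L^\infty$, hence in $L^1$, by Prop.~\ref{propBToRhoContinuous}, with $\mathcal{I}_{\mathcal{P}}(\rho_n)=\mathcal{I}_{\mathcal{Q}}(\vec{b}_n)<\mathcal{I}_{\mathcal{P}}(\rho^*)$, violating $L^1$-local minimality of $\rho^*$. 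The strict version follows by the same contradiction argument with weak inequality replacing strict inequality, noting that $\vec{b}_n\neq \vec{b}^*$ forces $\rho_n\neq\rho^*$ through the map $\rho\mapsto\vec{b}_\rho$. The principal obstacle is the convex-combination reduction to $\rho^*=\rho_{\vec{b}^*}$; once that is in hand, the remaining implications are routine applications of continuity and the envelope identity $\mathcal{I}_{\mathcal{Q}}=\mathcal{I}_{\mathcal{P}}\circ(\vec{b}\mapsto\rho_{\vec{b}})$.
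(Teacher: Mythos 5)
Your proposal follows essentially the same route as the paper's proof: the forward direction via the Lipschitz bound $|\vec{b}_\rho-\vec{b}_{\rho'}|\le\|\vec{a}\|_\infty\|\rho-\rho'\|_{L^1}$ and the envelope inequality $\mathcal{I}_{\mathcal{P}}(\rho)\ge\mathcal{I}_{\mathcal{Q}}(\vec{b}_\rho)$, the reverse direction by first reducing to $\rho^*=\rho_{\vec{b}^*}$ through the convex-combination argument and then invoking Prop.~\ref{propBToRhoContinuous}. The only stylistic differences are that you conclude the reverse direction by contradiction via a sequence $\vec{b}_n\to\vec{b}^*$ while the paper argues directly, and that you spell out the strictness bookkeeping that the paper dismisses as ``identical''; both are correct.
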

\begin{proof}
The proof for strict local minimisers is identical to that for non-strict minimisers, so the proof for strict local minimisers will be omitted. Let $\vec{b}^* \in \mathcal{Q}$ be a local minimiser, so that $\mathcal{I}_{\mathcal{Q}}(\vec{b}^*)\leq \mathcal{I}_{\mathcal{Q}}(\vec{b})$ for all $\vec{b} \in \mathcal{Q}$ with $|\vec{b}-\vec{b}^*|<\epsilon$. Define $\rho^* =\rho_{\vec{b}^*}$. Let $\rho \in \mathcal{P}(X)$ with $||\rho-\rho^*||_1<\delta$. Then if $\vec{b}=\vec{b}_\rho$, $|\vec{b}-\vec{b}^*|<\delta||\vec{a}||_\infty $. Finally, 
\begin{equation}
\begin{split}
\mathcal{I}_{\mathcal{P}}(\rho)-\mathcal{I}_{\mathcal{P}}(\rho^*) & = \int_X \phi(\rho)\,d\mu + f(\vec{b})-\mathcal{I}_{\mathcal{Q}}(\vec{b}^*)\\
&\geq \psi_s(\vec{b}) + f(\vec{b}) -\mathcal{I}_{\mathcal{Q}}(\vec{b}^*)\\
&=\mathcal{I}_{\mathcal{Q}}(\vec{b})-\mathcal{I}_{\mathcal{Q}}(\vec{b}^*).
\end{split}
\end{equation}
Therefore if $\delta < \frac{\epsilon}{||\vec{a}||_\infty}$, it holds that $|\vec{b}-\vec{b}^*|<\epsilon$ so that consequently $\mathcal{I}_{\mathcal{P}}(\rho)\geq \mathcal{I}_{\mathcal{P}(X)}(\rho^*)$.

To show the converse statement, note that if $\rho^*$ is an $L^1$-local minimiser of $\mathcal{I}_{\mathcal{P}}$, then $\rho^*=\rho_{\vec{b}^*}$. This can be seen by considering $\rho=(1-\gamma)\rho^*+\gamma\rho_{\vec{b}^*}$ for $\gamma>0$ small, and using the strict convexity of $\phi$, which gives that $\mathcal{I}_{\mathcal{P}(X)}(\rho)< \mathcal{I}_{\mathcal{P}(X)}(\rho^*)$.

Now let $\rho^*$ be an $L^1$-local minimiser, so that for all $\rho \in \mathcal{P}(X)$ with \linebreak $||\rho-\rho^*||_1<\epsilon$, $\mathcal{I}_{\mathcal{P}}(\rho)\geq\mathcal{I}_{\mathcal{P}}(\rho^*)$. Now consider $\vec{b} \in \mathcal{Q}$. Then
\begin{equation}
\mathcal{I}_{\mathcal{Q}}(\vec{b})-\mathcal{I}_{\mathcal{Q}}(\vec{b}^*) = \mathcal{I}_{\mathcal{P}}(\rho_{\vec{b}})-\mathcal{I}_{\mathcal{P}}(\rho^*).
\end{equation}
All that remains to show is that there exists some $\delta>0$ so that for all $\tilde{\vec{b}} \in \mathcal{Q}$ with $|\tilde{\vec{b}}-\vec{b}^*| < \delta$, then $||\rho_{\vec{b}}-\rho^*||_1<\epsilon$. This holds, since from Prop. \ref{propBToRhoContinuous} it is known that the map $\vec{b}\mapsto \rho_{\vec{b}}$ is continuous in $L^\infty$, and hence is continuous in $L^1$.\qed
\end{proof}

For the following result, a critical point of a function $F:\mathcal{P}(X)\to\mathbb{R}$ is defined to be any $\rho$ in $\mathcal{P}(X)$ that is bounded away from zero, such that for all $\xi \in L^\infty(X)$ with $\int_X \xi\,d\mu=0$, 
\begin{equation}
\left.\frac{d}{d \tau}  F(\rho+\tau\xi)\right|_{\tau=0}=0.
\end{equation}
\begin{proposition}
\label{propCriticalPointsEquivalent}
Let $\textup{int}(A) \neq \emptyset$ and let $f$ be $C^1$ on $\textup{int}(A)$. Then $\vec{b}^* \in \textup{int}(A)$ is a critical point of $\mathcal{I}_{\mathcal{Q}}$ if and only if $\rho^*=\rho_{\vec{b}^*}$ is a critical point of $\mathcal{I}_{\mathcal{P}}$ in $\{\rho \in \mathcal{P}(X): \vec{b}_\rho \in \textup{int}(A)\}$.
\end{proposition}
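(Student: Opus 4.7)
The plan is to rewrite both criticality conditions in terms of the dual Lagrange multipliers $(\alpha,\lambda)$ associated with $\vec{b}^*$ by Prop.~\ref{propFormRho}, and show that each reduces to the single equation $\lambda + \nabla f(\vec{b}^*) = 0$.

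For the macroscopic problem, $\psi_s \in C^2(\mathcal{Q})$ by Corollary~\ref{corollarySingularCM} and $f$ is $C^1$ on $\textup{int}(A)$, so $\mathcal{I}_{\mathcal{Q}}$ is $C^1$ near $\vec{b}^*$ and criticality is $\nabla\psi_s(\vec{b}^*)+\nabla f(\vec{b}^*)=0$. By Prop.~\ref{propSingularFirstDerivative}, $\nabla\psi_s(\vec{b}^*)=\lambda$, so this is precisely $\lambda + \nabla f(\vec{b}^*)=0$.

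For the microscopic problem, set $\rho^*=\rho_{\vec{b}^*}$, which by Prop.~\ref{propSingularPotentialWellDefined} is bounded away from $0$ and $+\infty$. For any $\xi \in L^\infty(X)$ with $\int_X \xi\,d\mu = 0$ and $|\tau|$ small, $\rho^*+\tau\xi\in\mathcal{P}(X)$, and since $\vec{b}_{\rho^*+\tau\xi}=\vec{b}^*+\tau\int_X \vec{a}\xi\,d\mu$, it remains in $\textup{int}(A)$. Because $\rho^*+\tau\xi$ stays inside a compact subinterval of $(0,\infty)$ uniformly for small $\tau$ and $\phi$ is $C^2$ there, dominated convergence yields
\begin{equation*}
\left.\frac{d}{d\tau}\mathcal{I}_{\mathcal{P}}(\rho^*+\tau\xi)\right|_{\tau=0} = \int_X \phi'(\rho^*)\xi\,d\mu + \nabla f(\vec{b}^*)\cdot\int_X \vec{a}\xi\,d\mu.
\end{equation*}
Substituting the explicit form $\phi'(\rho^*(t))=\alpha + \lambda\cdot\vec{a}(t)$ from Prop.~\ref{propFormRho} and using $\int_X \xi\,d\mu=0$ eliminates the $\alpha$-contribution and collapses the variation to $\bigl(\lambda + \nabla f(\vec{b}^*)\bigr)\cdot\int_X \vec{a}\xi\,d\mu$.

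It remains to show this quantity vanishes for every admissible $\xi$ if and only if $\lambda + \nabla f(\vec{b}^*)=0$. Under the normalisation $\int_X a_i\,d\mu=0$, $\int_X a_ia_j\,d\mu=\delta_{ij}$ adopted in Section~\ref{sectionMomentProblem}, the choice $\xi = \vec{c}\cdot\vec{a}$ for arbitrary $\vec{c}\in\mathbb{R}^k$ is mean-zero and produces $\int_X \vec{a}\xi\,d\mu=\vec{c}$, so the linear map $\xi \mapsto \int_X \vec{a}\xi\,d\mu$ is surjective onto $\mathbb{R}^k$ on mean-zero $L^\infty$ test functions (without the normalisation, surjectivity follows from invertibility of the Gram matrix, guaranteed by pseudo-Haar). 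This forces $\lambda + \nabla f(\vec{b}^*)=0$, matching the macroscopic condition and closing the equivalence. The only delicate point is the justification of differentiation under the integral and the requirement that the perturbed density remain in the admissible class $\{\rho : \vec{b}_\rho \in \textup{int}(A)\}$; both are immediate from $\rho^*$ being bounded away from zero and infinity together with $\vec{b}^*\in \textup{int}(A)$, so no substantial obstacle arises.
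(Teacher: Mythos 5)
Your proof is correct and takes essentially the same approach as the paper: translate macroscopic criticality to $\lambda + \nabla f(\vec{b}^*) = 0$ via $\nabla\psi_s = \lambda$, substitute $\phi'(\rho^*) = \alpha + \lambda\cdot\vec{a}$ into the first variation of $\mathcal{I}_{\mathcal{P}}$, and use $\int_X \xi\,d\mu = 0$ to kill the $\alpha$-term. Your treatment is slightly more explicit than the paper's on the converse direction, where you verify surjectivity of $\xi \mapsto \int_X \vec{a}\xi\,d\mu$ by testing with $\xi = \vec{c}\cdot\vec{a}$, whereas the paper simply states the converse ``follows by the same argument.''
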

\begin{proof}
Since $\vec{b}^*$ is a critical point of the $C^1$ function $\mathcal{I}_{\mathcal{Q}}$, it holds that 
\begin{equation}
\begin{split}
0&= \nabla \mathcal{I}_{\mathcal{Q}}(\vec{b}^*)\\
&=\lambda(\vec{b}^*)+\nabla f(\vec{b}^*).
\end{split}
\end{equation}
This implies that $\phi'(\rho^*)=\alpha - \nabla f(\vec{b}^*)\cdot \vec{a}$. Consider any $\xi \in L^\infty(X)$ with $\int_X \xi \,d\mu=0$. This gives
\begin{equation}
\begin{split}
\left.\frac{d}{d\tau}\mathcal{I}_{\mathcal{P}}(\rho^*+\tau\xi)\right|_{\tau=0} & = \int_X \left(\phi'(\rho^*)-\nabla f(\vec{b}^*)\cdot a \right)\xi\,d\mu\\
&= \alpha\int_X \xi \, d\mu\\
&=0.
\end{split}
\end{equation}
The converse follows by the same argument.\qed
\end{proof}

\begin{remark}
Whilst Proposition \ref{propLocalMins} shows that there is a one-to-one correspondence between local minima of $\mathcal{I}_{\mathcal{P}}$ and $\mathcal{I}_{\mathcal{Q}}$, an analogous result does not hold for local maxima. Let $\vec{b}\in\mathcal{Q}$ be a local maximum for $\mathcal{I}_{\mathcal{Q}}$, and let $\xi \in L^\infty(X)\setminus\{0\}$ be such that $\int_X \xi \,d\mu=0$ and $\int_X \vec{a}\xi\,d\mu=\vec{0}$. Then $\rho_{\vec{b}}+\tau\xi \in \mathcal{A}_{\vec{b}}$ for $\tau \in \mathbb{R}$ sufficiently small to ensure the non-negativity constraint is satisfied, and hence 
\begin{equation}
\begin{split}
\mathcal{I}_{\mathcal{P}}(\rho_{\vec{b}}+\tau \xi ) &= \int_{\mathbb{S}^2}\phi(\rho_{\vec{b}}+\tau\xi) +f(\vec{b})\\
&>\psi_s(\vec{b})+f(\vec{b})\\
&=\mathcal{I}_{\mathcal{Q}}(\vec{b})\\
&=\mathcal{I}_{\mathcal{P}}(\rho_{\vec{b}}).
\end{split}
\end{equation}
By taking $\tau$ sufficiently small shows that $\rho_{\vec{b}}$ is not a local maximum, although if $f\in C^1$, $\rho_{\vec{b}}$ will be a critical point by Prop. \ref{propCriticalPointsEquivalent}. 
\end{remark}

\begin{corollary}\label{corollaryCriticalPointCondition1}
Consider the minimisation problem $(\textup{P}1)$. If $A=\mathcal{Q}$ and $f \in C^1(\overline{\mathcal{Q}};\mathbb{R})$ then there exists a global minimum, and all $L^1$-local minima and critical points satisfy
\begin{equation}
\rho^*(t)=\left(\phi'\right)^{-1}\left(\alpha - \nabla f(\vec{b}_\rho)\cdot \vec{a}(t)\right)
\end{equation}
for some constant $\alpha \in \mathbb{R}$. 
\end{corollary}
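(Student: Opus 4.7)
The plan is to recognise this corollary as a direct synthesis of the machinery assembled in this section, with essentially no new work beyond verifying that the hypotheses $A=\mathcal{Q}$ and $f \in C^1(\overline{\mathcal{Q}};\mathbb{R})$ match the requirements of each invoked result.

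First I would handle existence by applying Prop. \ref{propGlobalMinsEquiv}. The set $A=\mathcal{Q}$ is trivially relatively closed in $\mathcal{Q}$, and since $\overline{\mathcal{Q}}$ is compact and $f$ is continuous on it, $f$ is bounded and in particular bounded below; the ``$+\infty$ on $\mathcal{Q}\setminus A$'' condition is vacuous. Hence a global minimiser $\vec{b}^*$ of $\mathcal{I}_{\mathcal{Q}}$ exists, and $\rho^* = \rho_{\vec{b}^*}$ is a global minimiser of $\mathcal{I}_{\mathcal{P}}$.

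Next I would derive the explicit form at $L^1$-local minima. Under the standing hypotheses on $\phi$, Corollary \ref{corollarySingularCM} gives $\psi_s \in C^2(\mathcal{Q};\mathbb{R})$, and the assumption $f\in C^1(\overline{\mathcal{Q}};\mathbb{R})$ makes $\mathcal{I}_{\mathcal{Q}}$ at least $C^1$ on the open set $\mathcal{Q}$. By Prop. \ref{propLocalMins}, an $L^1$-local minimiser $\rho^*$ of $\mathcal{I}_{\mathcal{P}}$ must be of the form $\rho_{\vec{b}^*}$ for a local minimiser $\vec{b}^*$ of $\mathcal{I}_{\mathcal{Q}}$, which being interior to $\mathcal{Q}$ satisfies $\nabla \mathcal{I}_{\mathcal{Q}}(\vec{b}^*)=0$. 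Using Prop. \ref{propSingularFirstDerivative} to identify $\nabla \psi_s(\vec{b}^*)=\lambda(\vec{b}^*)$ gives $\lambda(\vec{b}^*)=-\nabla f(\vec{b}^*)$, and substituting this into the representation $\rho_{\vec{b}^*}(t) = (\phi')^{-1}(\alpha + \lambda(\vec{b}^*)\cdot \vec{a}(t))$ from Prop. \ref{propFormRho} produces the stated identity.

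For arbitrary critical points $\rho^*$ I would invoke Prop. \ref{propCriticalPointsEquivalent}: since $\textup{int}(A)=\mathcal{Q}$, such $\rho^*$ corresponds to a critical point $\vec{b}_{\rho^*}$ of $\mathcal{I}_{\mathcal{Q}}$, after which the same chain of substitutions applies. The only detail requiring a sentence of care, and the nearest thing to an obstacle in the argument, is ensuring that a critical point $\rho^*$ coincides with $\rho_{\vec{b}_{\rho^*}}$: the Euler--Lagrange condition $\int_X \bigl(\phi'(\rho^*) + \nabla f(\vec{b}_{\rho^*})\cdot \vec{a}\bigr)\xi\,d\mu = 0$ for every mean-zero $\xi \in L^\infty(X)$ forces $\phi'(\rho^*) + \nabla f(\vec{b}_{\rho^*})\cdot \vec{a}$ to be almost-everywhere constant, and strict monotonicity of $\phi'$ then recovers $\rho^*$ uniquely from $\vec{b}_{\rho^*}$ in the form prescribed by Prop. \ref{propFormRho}. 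The remaining care is purely bookkeeping: tracking sign conventions between $\lambda$ and $\nabla f$, and confirming the differentiability needed for local minima to be stationary points of $\mathcal{I}_{\mathcal{Q}}$.
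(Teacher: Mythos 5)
Your argument is correct and takes essentially the same route as the paper, whose proof simply cites Propositions \ref{propGlobalMinsEquiv}, \ref{propLocalMins} and \ref{propCriticalPointsEquivalent} as implying the result directly. You unpack that chain more explicitly -- verifying the hypotheses, using Proposition \ref{propSingularFirstDerivative} to identify $\nabla\psi_s=\lambda$, and substituting into the representation from Proposition \ref{propFormRho} -- but this is precisely the bookkeeping the paper's terse proof leaves to the reader.
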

\begin{proof}
This is a straightforward application of Propositions  \ref{propGlobalMinsEquiv}, \ref{propLocalMins} and \ref{propCriticalPointsEquivalent}.\qed
\end{proof}

\begin{corollary}\label{corollaryCriticalPointCondition2}
Consider the minimisation problem $(\textup{P}1)$. Let $g\in C^1(\mathcal{Q},\mathbb{R})$ and $A=\{\vec{b}\in\mathcal{Q}:g(\vec{b})=0\}\neq \emptyset$. Then there exists a global minimum, and all $L^1$-local minima and critical points satisfy 
\begin{equation}
\rho^*(t)=\left(\phi'\right)^{-1}\left(\alpha+ \eta\nabla g(\vec{b})\cdot \vec{a}(t)\right)
\end{equation}
for all $t \in X$ and some Lagrange multiplier $\eta \in \mathbb{R}$.
\end{corollary}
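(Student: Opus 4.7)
The plan is to combine the reduction to the finite-dimensional problem (P2) with the classical Lagrange multiplier theorem applied on the $C^1$ constraint hypersurface $\{g=0\}\subset\mathcal{Q}$. The setup from the preceding definitions applies: with $f$ defined to be $0$ on $A$ and $+\infty$ elsewhere, $A$ is relatively closed in $\mathcal{Q}$ by the continuity of $g$, and the problems (P1), (P2) are coupled through the identity $\mathcal{I}_{\mathcal{Q}}(\vec{b})=\mathcal{I}_{\mathcal{P}}(\rho_{\vec{b}})$.

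First, I would dispose of the existence of a global minimum by citing Proposition \ref{propGlobalMinsEquiv}: since $f$ satisfies the standing hypotheses (continuous and bounded below on $A$, $+\infty$ off $A$) and $\mathcal{I}_{\mathcal{Q}}$ blows up at $\partial\mathcal{Q}$ by the growth result for $\psi_s$, a minimizer $\vec{b}^*\in A$ exists, and $\rho^*:=\rho_{\vec{b}^*}$ is the unique global minimizer of $\mathcal{I}_{\mathcal{P}}$. Then, using Proposition \ref{propLocalMins}, any $L^1$-local minimizer of $\mathcal{I}_{\mathcal{P}}$ is of the form $\rho_{\vec{b}^*}$ where $\vec{b}^*\in A$ is a local minimizer of $\psi_s$ constrained to $A$. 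For critical points, I would spell out that testing the first variation of $\mathcal{I}_{\mathcal{P}}$ against $\xi\in L^\infty(X)$ with $\int_X\xi\,d\mu=0$ that is admissible (i.e.\ that preserves the constraint $g(\vec{b}_\rho)=0$ to first order) forces $\nabla g(\vec{b}^*)\cdot\int_X\vec{a}\xi\,d\mu=0$, so the problem in $\mathcal{P}(X)$ descends to a stationarity problem on the smooth hypersurface $\{g=0\}$ in $\mathcal{Q}$.

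Next, the key step is the classical Lagrange multiplier theorem: if $\vec{b}^*\in A$ is a local minimizer or critical point of $\psi_s|_A$ and $\nabla g(\vec{b}^*)\neq 0$, there exists $\eta\in\mathbb{R}$ with
\begin{equation}
\nabla\psi_s(\vec{b}^*)=\eta\,\nabla g(\vec{b}^*).
\end{equation}
By Proposition \ref{propSingularFirstDerivative}, the gradient $\nabla\psi_s(\vec{b}^*)$ equals the dual variable $\lambda$ associated with $\vec{b}^*$, so $\lambda=\eta\,\nabla g(\vec{b}^*)$. Substituting this into the explicit representation of the minimizer given by Proposition \ref{propFormRho} yields
\begin{equation}
\rho^*(t)=(\phi')^{-1}\bigl(\alpha+\eta\,\nabla g(\vec{b}^*)\cdot\vec{a}(t)\bigr),
\end{equation}
as claimed.

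The main obstacle is the degenerate case $\nabla g(\vec{b}^*)=0$, where the standard Lagrange multiplier theorem does not apply directly. I would handle this by observing that in this situation every $\xi\in L^\infty(X)$ with $\int_X\xi\,d\mu=0$ is admissible to first order, so the first variation condition forces $\phi'(\rho^*)$ to equal a constant $\mu$-a.e., equivalently $\lambda=0$ by the pseudo-Haar hypothesis. Then the claimed formula holds trivially with $\eta=0$ (any $\eta$ in fact), since $\eta\,\nabla g(\vec{b}^*)=0$. Modulo this ad hoc verification, the result reduces to Corollaries \ref{corollarySingularCM}, Propositions \ref{propFormRho}, \ref{propSingularFirstDerivative}, \ref{propGlobalMinsEquiv} and \ref{propLocalMins} together with finite-dimensional Lagrange multipliers.
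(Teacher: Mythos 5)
Your proof is correct and follows the same high-level strategy the paper sketches (reduce from $\mathcal{P}(X)$ to $\mathcal{Q}$ via Propositions \ref{propGlobalMinsEquiv} and \ref{propLocalMins}, then invoke a finite-dimensional stationarity argument and read off the formula through Propositions \ref{propFormRho} and \ref{propSingularFirstDerivative}). But you have actually been more careful than the paper itself: the paper dispatches this corollary by citing Proposition \ref{propCriticalPointsEquivalent} alongside the two equivalence propositions, and Proposition \ref{propCriticalPointsEquivalent} is stated under the hypothesis $\textup{int}(A)\neq\emptyset$, which fails here since $A=\{g=0\}$ is generically a hypersurface of codimension one in $\mathcal{Q}$. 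Your substitution of the finite-dimensional Lagrange multiplier theorem for that citation is the right way to close the gap, and your separate treatment of the degenerate case $\nabla g(\vec{b}^*)=0$ (where the classical multiplier rule does not apply and you instead argue directly that $\phi'(\rho^*)$ must be constant, hence $\lambda=0$ by pseudo-Haar, so the formula holds with $\eta=0$) addresses a corner the paper passes over silently. One small point of presentation: the sentence claiming that admissibility of $\xi$ ``forces'' $\nabla g(\vec{b}^*)\cdot\int_X\vec{a}\,\xi\,d\mu=0$ reads backwards — that relation is the definition of first-order admissibility, not a consequence of stationarity — but the underlying idea is right, and an alternative one-step route is available: stationarity means $\int_X\phi'(\rho^*)\xi\,d\mu=0$ for every $\xi\in L^\infty$ orthogonal to both $1$ and $\nabla g(\vec{b}^*)\cdot\vec{a}$, so $\phi'(\rho^*)$ lies in the span of those two functions, giving the formula directly and covering the case $\nabla g(\vec{b}^*)=0$ uniformly.
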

\begin{proof}
Again this is a straightforward application of Propositions   \ref{propGlobalMinsEquiv}, \ref{propLocalMins} and \ref{propCriticalPointsEquivalent}.\qed
\end{proof}

Although the equivalent problem (P2) is simpler by virtue of being finite-dimensional, the non-explicit representation of the energy limits its usefulness. However, an order parameter that is in some sense dual to $\vec{b}\in\mathcal{Q}$ can be considered, and reduces the dual optimisation scheme needed at each point in the domain to a simple 1-dimensional problem. First, a lemma is required. 

\begin{lemma}
Let $\lambda \in \mathbb{R}^k$. Then there exists a unique $\alpha_\lambda \in\mathbb{R}$ such that $\int_X \left(\phi'\right)^{-1}(\alpha_\lambda +\lambda \cdot a(t))\,d\mu(t)=1$, which can be found by solving 
\begin{equation}
\max\limits_{\alpha \in\mathbb{R}} \alpha -\int_X \phi^*(\alpha+\lambda\cdot a(t))\,d\mu(t).\label{eqMax1}
\end{equation}
Furthermore, if $\phi$ is $C^m$ on its effective domain for $m\geq 2$ then $\alpha$ is a $C^{m-1}$ function of $\lambda$.
\end{lemma}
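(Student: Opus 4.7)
The plan is to split the lemma into three parts (existence/uniqueness of $\alpha_\lambda$, the variational characterisation as a maximiser, and smoothness in $\lambda$) and handle each in turn, all of them reducing to one-dimensional analysis.

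First, I would define $F(\alpha):=\int_X (\phi')^{-1}(\alpha+\lambda\cdot\vec{a}(t))\,d\mu(t)$ and exploit that, because $\phi$ is entropy-like, $\phi'$ is a continuous bijection $(0,\infty)\to\mathbb{R}$, so $(\phi')^{-1}$ is continuous, strictly increasing on $\mathbb{R}$ with $(\phi')^{-1}(s)\to 0$ as $s\to-\infty$ and $(\phi')^{-1}(s)\to\infty$ as $s\to\infty$. Combined with $\vec{a}\in L^\infty$ and $\mu(X)<\infty$, the monotone convergence theorem then yields that $F$ is continuous, strictly increasing, with limits $0$ and $+\infty$ at $\pm\infty$. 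The intermediate value theorem provides a unique $\alpha_\lambda$ with $F(\alpha_\lambda)=1$.

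Next I would consider $G(\alpha):=\alpha-\int_X\phi^*(\alpha+\lambda\cdot\vec{a}(t))\,d\mu(t)$. The superlinear growth of $\phi$ at $+\infty$ together with $\phi\equiv+\infty$ on $(-\infty,0)$ forces $\phi^*$ to be finite and convex on all of $\mathbb{R}$, and the Legendre-type structure identifies $(\phi^*)'=(\phi')^{-1}$, which is strictly increasing, so $\phi^*$ is strictly convex and $G$ is strictly concave. Boundedness of $\vec{a}$ allows differentiation under the integral, giving $G'(\alpha)=1-F(\alpha)$, so the unique critical point of $G$ is precisely $\alpha_\lambda$, which is therefore its unique global maximiser; moreover, the coercivity of $-G$ forces the supremum to be attained, as required.

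For the smoothness assertion, I would apply the implicit function theorem to $H(\alpha,\lambda):=F(\alpha)-1$. The hypotheses $\phi\in C^m$ and $\phi''>0$ on $(0,\infty)$ give $(\phi')^{-1}\in C^{m-1}(\mathbb{R})$ by the classical inverse function theorem, hence $H\in C^{m-1}(\mathbb{R}\times\mathbb{R}^k)$ after differentiation under the integral (again justified by local uniform bounds from $\vec{a}\in L^\infty$). Since
$\dfrac{\partial H}{\partial\alpha}(\alpha,\lambda)=\int_X\bigl(\phi''\circ(\phi')^{-1}(\alpha+\lambda\cdot\vec{a}(t))\bigr)^{-1}\,d\mu(t)>0,$
the implicit function theorem delivers $\alpha_\lambda\in C^{m-1}(\mathbb{R}^k)$, completing the argument.

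The only genuine technical care is in justifying the differentiation under the integral sign for $G'$ and for $H$; both follow from continuity of the integrands together with uniform local bounds coming from $\vec{a}\in L^\infty$ and the continuity of $(\phi^*)'$ and of $1/(\phi''\circ(\phi')^{-1})$, so I do not expect any serious obstacle.
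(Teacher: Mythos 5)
Your proof is correct and follows the same strategy as the paper's: intermediate value theorem for existence and uniqueness of $\alpha_\lambda$ (using monotonicity of $(\phi')^{-1}$), identification of $\alpha_\lambda$ as the unique critical point of the strictly concave dual functional, and the implicit function theorem for $C^{m-1}$ dependence on $\lambda$. You simply supply explicit detail (limits of $F$ at $\pm\infty$, differentiation under the integral, coercivity of $-G$) that the paper leaves implicit or delegates to a reference to Proposition \ref{propSingularPotSmooth}.
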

\begin{proof}
Existence of such an $\alpha_\lambda$ can be seen by recalling that $\left(\phi'\right)^{-1}$ is an increasing bijection between $\mathbb{R}$ and $(0,\infty)$ and the map $\alpha \mapsto \int_X \left(\phi'\right)^{-1}(\alpha+\lambda \cdot a(t))\,d\mu(t)$ is a continuous function of $\alpha$ by applying the intermediate value theorem. Uniqueness follows from the strict monotonicity of $\left(\phi'\right)^{-1}$. The regularity follows from the implicit function theorem, using an argument analogous to Proposition \ref{propSingularPotSmooth}. The existence of a maximiser for the problem in Equation \eqref{eqMax1} follows from the existence of a unique critical point and the strict concavity of the functional. 
\end{proof}

\begin{remark}\label{remarkShannonConstant}
In the case of the Shannon entropy, the additive property of the exponential greatly simplifies the previous lemma, since $\alpha_\lambda$ can explicitly be given as 
\begin{equation}
\alpha_\lambda = 1-\ln\left(\int_X \exp\left(\lambda \cdot a(t)\right)\,d\mu(t)\right).
\end{equation}
\end{remark}

We now define $\lambda \in\mathbb{R}^k$ to be the dual order parameter. It corresponds to the classical order parameter through the relationship 
\begin{equation}
\vec{b}=\int_X \left(\phi'\right)^{-1}\left(\alpha_\lambda +\lambda \cdot a(t)\right)a(t)\,d\mu(t).
\end{equation}
If $\phi$ is $C^m$ for $m\geq 2$, then it follows that the relationship between the dual and classical order parameters is a $C^{m-1}$ bijection between $\mathbb{R}^k$ and $\mathcal{Q}$. Furthermore, if $\vec{b}^\lambda\in\mathcal{Q}$ is the classical order parameter corresponding to a dual order parameter $\lambda$, then it is immediate that 
\begin{equation}\mathcal{I}_{\mathcal{Q}}(\vec{b}^\lambda)=\alpha_\lambda + \lambda \cdot \vec{b}^\lambda + f(\vec{b}^\lambda)=\mathcal{I}_{\mathbb{R}^k}(\lambda)\end{equation}
The map $\lambda \mapsto \vec{b}^\lambda$ is a continuous open map, which implies that local minimisers of $\mathcal{I}_{\mathbb{R}^k}$ and $\mathcal{I}_{\mathcal{Q}}$ are in one-to-one correspondence also. The advantage of solving $\mathcal{I}_{\mathbb{R}^k}$ is that the domain is no longer a constrained set, and at each point one must only solve a 1-dimensional optimisation problem rather than a $(k+1)$-dimensional one. In the case of the Shannon entropy, the explicit expression of $\alpha_\lambda$ as mentioned in Remark \ref{remarkShannonConstant} requires no optimisation problem to be solved in order to evaluate the energy. In the case of the Onsager energy, the following proposition explicitly states these conclusions.

\begin{proposition}
Let $\lambda \in\mathbb{R}^k$. Define 
\begin{equation}
\begin{split}
Z_\lambda =& \int_X \exp(\lambda \cdot \vec{a}(t))\,d\mu(t),\\
\vec{b}^\lambda=&\frac{1}{Z_\lambda} \int_X \exp(\lambda \cdot \vec{a}(t))\vec{a}(t)\,d\mu(t).
\end{split}
\end{equation}
Then all global minimisers, $L^1$-local minimisers and critical points (respectively) of Onsager's free energy 
\begin{equation}\mathcal{I}_{\mathcal{P}(X)}(\rho)=T\int_X \rho(t)\ln\rho(t)\,d\mu(t)-\frac{1}{2}\tens{K}\vec{b}_\rho \cdot \vec{b}_\rho \end{equation}
are in one-to-one correspondence with global minimisers, local minimisers and critical points (respectively) of the function 
\begin{equation}
T\left(\lambda \cdot \vec{b}^\lambda - \ln(Z_\lambda)\right)-\frac{1}{2}\tens{K}\vec{b}^\lambda \cdot \vec{b}^\lambda
\end{equation}
by the relation
\begin{equation}
\rho=\frac{1}{Z_\lambda}\exp(\lambda \cdot \vec{a}(t)).\end{equation}
\end{proposition}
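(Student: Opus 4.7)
The plan is to assemble this proposition from the machinery built up earlier in the section, treating the result as essentially a specialisation of Corollaries \ref{corollaryCriticalPointCondition1} together with the dual-order-parameter reformulation described just before the statement. First I would identify the Onsager functional with the generic form $\mathcal{I}_{\mathcal{P}}$ from Section \ref{SubsecMeanField} by taking $\phi(x)=Tx\ln x$ (which is entropy-like, strictly convex, $C^\infty$ on $(0,\infty)$ with $\phi''(x)=T/x>0$) and $f(\vec{b})=-\tfrac{1}{2}\tens{K}\vec{b}\cdot \vec{b}$. Since $\mathcal{Q}$ is bounded, $f\in C^\infty(\overline{\mathcal{Q}};\mathbb{R})$, so taking $A=\mathcal{Q}$ the hypotheses of Corollary \ref{corollaryCriticalPointCondition1} hold and we obtain the equivalence of global minimisers, $L^1$-local minimisers and critical points of $\mathcal{I}_{\mathcal{P}}$ with those of
\begin{equation*}
\mathcal{I}_{\mathcal{Q}}(\vec{b}) = T\psi_s(\vec{b}) - \tfrac{1}{2}\tens{K}\vec{b}\cdot \vec{b},
\end{equation*}
via the correspondence $\vec{b}\mapsto \rho_{\vec{b}}$.

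Next I would invoke the dual-order-parameter bijection $\lambda\mapsto \vec{b}^\lambda$, established in the paragraphs immediately preceding the proposition, which is a $C^\infty$ diffeomorphism between $\mathbb{R}^k$ and $\mathcal{Q}$ for the Shannon entropy. By that discussion, $\mathcal{I}_{\mathcal{Q}}(\vec{b}^\lambda) = \mathcal{I}_{\mathbb{R}^k}(\lambda)$, and since a $C^1$ diffeomorphism preserves local and global minimisers and critical points, the minimisers of $\mathcal{I}_{\mathcal{Q}}$ and of $\mathcal{I}_{\mathbb{R}^k}$ are in one-to-one correspondence. Composing with the first step then gives the one-to-one correspondence between minimisers/critical points of $\mathcal{I}_{\mathcal{P}}$ and of the stated functional on $\mathbb{R}^k$.

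It then remains to make the functional on $\mathbb{R}^k$ explicit for the Shannon entropy. Here the plan is to use Remark \ref{remarkShannonConstant}, which gives $\alpha_\lambda = 1 - \ln Z_\lambda$, so that the optimal density takes the stated form
\begin{equation*}
\rho_{\vec{b}^\lambda}(t) = (\phi')^{-1}(\alpha_\lambda + \lambda\cdot \vec{a}(t)) = \exp(\alpha_\lambda - 1 + \lambda\cdot \vec{a}(t)) = \frac{1}{Z_\lambda}\exp(\lambda\cdot\vec{a}(t)).
\end{equation*}
Substituting this into $\int_X \rho\ln\rho\,d\mu$ and using $\int_X\rho\,d\mu=1$ and $\int_X \vec{a}\rho\,d\mu = \vec{b}^\lambda$ yields $\psi_s(\vec{b}^\lambda) = \lambda\cdot\vec{b}^\lambda - \ln Z_\lambda$, and multiplying by $T$ and adding $f(\vec{b}^\lambda)$ gives exactly the functional in the statement.

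There is no serious obstacle; the proof is essentially bookkeeping once the general framework is in place. The only mild care point is verifying that the scaling by the temperature $T$ is compatible with the framework (handled by taking $\phi(x)=Tx\ln x$, which is still entropy-like with positive second derivative, so Corollary \ref{corollaryCriticalPointCondition1} applies verbatim), and that the regularity of $f$ on the closure $\overline{\mathcal{Q}}$ is available so that the hypotheses of that corollary are met; both are immediate here because $\tens{K}$ is a constant symmetric matrix and $\mathcal{Q}$ is bounded.
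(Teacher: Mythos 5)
Your proof is correct and follows essentially the same route the paper intends: the paper's proof is simply ``This follows from the preceding discussion,'' and you have unpacked that discussion in the right order (the equivalence of $\mathcal{I}_{\mathcal{P}}$ and $\mathcal{I}_{\mathcal{Q}}$ via Propositions \ref{propGlobalMinsEquiv}, \ref{propLocalMins}, \ref{propCriticalPointsEquivalent}, then the dual-order-parameter bijection $\lambda\mapsto\vec{b}^\lambda$, then Remark \ref{remarkShannonConstant} and the explicit evaluation $\psi_s(\vec{b}^\lambda)=\lambda\cdot\vec{b}^\lambda-\ln Z_\lambda$). One small point of care: absorbing $T$ into the objective as $\phi(x)=Tx\ln x$ changes the dual multiplier by a factor of $T$, so when you then invoke the dual-order-parameter map ``for the Shannon entropy'' you are implicitly switching conventions; it is cleaner to keep $\phi(x)=x\ln x$, write $\mathcal{I}_{\mathcal{Q}}=T\psi_s+f$ directly, and note that the extra factor $T$ is harmless since it scales but does not change minimisers or critical points.
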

\begin{proof}
This follows from the preceding discussion. 
\end{proof}

Finally using the machinery of the singular potential it is possible to obtain some simple estimates on the stability of the isotropic phase $\rho(t)=\frac{1}{\mu(X)}$ in mean field free energy models. In the following $\phi$ will be taken as the Shannon entropy $\phi(x)=x\ln x$, and the constraint functions $a_i$ are taken to be orthonormal in $L^2(X)$. Define the mean field free energy on $\mathcal{P}(X)$ for $T>0$ and a positive definite $k \times k$ matrix $\tens{K}$ as
\begin{equation}
\mathcal{I}_{\mathcal{P}}(\rho)=T\int_X \rho \ln \rho\,d\mu - \frac{1}{2}\tens{K}\vec{b}\cdot \vec{b}
\end{equation}
\begin{proposition}\label{propLocalStability}
If $T>||\vec{a}||_\infty^2\lambda_{\max}(\tens{K})$ then the isotropic state is globally stable and there are no other critical points of $\mathcal{I}_{\mathcal{P}}$. If $T>\frac{1}{\mu(X)}\lambda_{\max}(\tens{K})$ then the isotropic state is at least locally stable. If $T< \frac{1}{\mu(X)}\lambda_{\max}(\tens{K})$ then the isotropic state is unstable.
\end{proposition}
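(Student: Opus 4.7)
The plan is to translate the whole question into the macroscopic picture via Propositions \ref{propGlobalMinsEquiv}, \ref{propLocalMins} and \ref{propCriticalPointsEquivalent}. Under the orthonormality convention, the uniform state $\rho_U=1/\mu(X)$ corresponds to $\vec{b}=\vec{0}\in\mathcal{Q}$, and by the symmetry of the Shannon problem $\rho_{\vec{0}}=\rho_U$ with dual multiplier $\lambda=0$, so $\nabla\psi_s(\vec{0})=0$ by Prop.~\ref{propSingularFirstDerivative}. Hence $\vec{0}$ is always a critical point of $\mathcal{I}_{\mathcal{Q}}(\vec{b})=T\psi_s(\vec{b})-\tfrac12\tens{K}\vec{b}\cdot\vec{b}$, and the stability questions reduce to sign/definiteness questions for the symmetric matrix
\begin{equation}
H(\vec{b})=T\,\nabla^2\psi_s(\vec{b})-\tens{K}.
\end{equation}

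The key computation is to control $\nabla^2\psi_s$. By Prop.~\ref{propSecondDerivative} applied to the Shannon entropy,
\begin{equation}
\nabla^2\psi_s(\vec{b})=\left(\int_X\vec{a}\otimes\vec{a}\,\rho_{\vec{b}}\,d\mu-\vec{b}\otimes\vec{b}\right)^{-1}=\left(\int_X(\vec{a}-\vec{b})\otimes(\vec{a}-\vec{b})\,\rho_{\vec{b}}\,d\mu\right)^{-1},
\end{equation}
which is the inverse of the covariance of $\vec{a}$ under $\rho_{\vec{b}}$. For any unit $\vec{u}\in\mathbb{R}^k$, the scalar estimate
\begin{equation}
\vec{u}\cdot\left(\int_X\vec{a}\otimes\vec{a}\,\rho_{\vec{b}}\,d\mu-\vec{b}\otimes\vec{b}\right)\vec{u}\leq\int_X(\vec{u}\cdot\vec{a})^2\rho_{\vec{b}}\,d\mu\leq\|\vec{a}\|_\infty^2
\end{equation}
gives the matrix bound $\nabla^2\psi_s(\vec{b})\geq\|\vec{a}\|_\infty^{-2}I$ uniformly in $\vec{b}\in\mathcal{Q}$. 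Thus if $T>\|\vec{a}\|_\infty^2\lambda_{\max}(\tens{K})$ then $H(\vec{b})\geq(T\|\vec{a}\|_\infty^{-2}-\lambda_{\max}(\tens{K}))I>0$ on $\mathcal{Q}$, so $\mathcal{I}_{\mathcal{Q}}$ is strictly convex; combined with the blow-up of $\psi_s$ at $\partial\mathcal{Q}$, this yields a unique critical point, which must be $\vec{0}$, and by Props.~\ref{propGlobalMinsEquiv}–\ref{propCriticalPointsEquivalent} the same holds in $\mathcal{P}(X)$.

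For the local statements I specialise at $\vec{b}=\vec{0}$. Using $\rho_{\vec{0}}=1/\mu(X)$ and the $L^2$-orthonormality $\int_X a_ia_j\,d\mu=\delta_{ij}$,
\begin{equation}
\nabla^2\psi_s(\vec{0})=\left(\frac{1}{\mu(X)}\int_X\vec{a}\otimes\vec{a}\,d\mu\right)^{-1}=\mu(X)I,
\end{equation}
so $H(\vec{0})=T\mu(X)I-\tens{K}$. If $T>\lambda_{\max}(\tens{K})/\mu(X)$ then $H(\vec{0})>0$, so $\vec{0}$ is a strict local minimum of $\mathcal{I}_{\mathcal{Q}}$ and therefore, by Prop.~\ref{propLocalMins}, $\rho_U$ is a strict $L^1$-local minimum of $\mathcal{I}_{\mathcal{P}}$. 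Conversely, if $T<\lambda_{\max}(\tens{K})/\mu(X)$, pick an eigenvector $\vec{v}$ of $\tens{K}$ for $\lambda_{\max}(\tens{K})$ and Taylor expand: $\mathcal{I}_{\mathcal{Q}}(\varepsilon\vec{v})-\mathcal{I}_{\mathcal{Q}}(\vec{0})=\tfrac{\varepsilon^2}{2}(T\mu(X)-\lambda_{\max}(\tens{K}))|\vec{v}|^2+o(\varepsilon^2)<0$ for small $\varepsilon$, contradicting local minimality in $\mathcal{Q}$; the $L^1$-continuity of $\vec{b}\mapsto\rho_{\vec{b}}$ from Prop.~\ref{propBToRhoContinuous} transfers the instability to $\mathcal{P}(X)$.

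The only mildly delicate point is the first claim, where one needs the \emph{uniform} lower bound on $\nabla^2\psi_s$ over all of $\mathcal{Q}$ (not just at the origin), since without it one could only deduce local, not global, strict convexity; the bound $\nabla^2\psi_s\geq\|\vec{a}\|_\infty^{-2}I$ above handles exactly this. The remaining steps are straightforward once the Hessian formula of Prop.~\ref{propSecondDerivative} and the reduction to finite dimensions are in place.
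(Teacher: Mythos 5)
Your proposal is correct and follows essentially the same route as the paper: reduce to the macroscopic functional $\mathcal{I}_{\mathcal{Q}}$, apply the Hessian formula from Prop.~\ref{propSecondDerivative}, bound the covariance matrix from above by $\|\vec{a}\|_\infty^2 I$ via H\"older to get the uniform lower bound on $\nabla^2\psi_s$, evaluate at the origin using the $L^2$-orthonormality to get $\nabla^2\psi_s(\vec{0})=\mu(X)I$, and transfer back to $\mathcal{P}(X)$ via Propositions~\ref{propGlobalMinsEquiv} and~\ref{propLocalMins}. The only cosmetic differences (the covariance rewrite of the Hessian and the explicit Taylor step for the unstable case) do not change the argument.
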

\begin{proof}
Consider the corresponding free energy functional $\mathcal{I}_\mathcal{Q}:\mathcal{Q} \to \mathbb{R}$ defined by 
$$\mathcal{I}_{\mathcal{Q}}(\vec{b})=T\psi_s(\vec{b})-\frac{1}{2}\tens{K}\vec{b}\cdot \vec{b}$$
Note first that the isotropic state $b=0$ is always a critical point for $\mathcal{I}_{\mathcal{Q}}$. The Hessian matrix of $\mathcal{I}_{\mathcal{Q}}$ at $\vec{b}$ is given by 
\begin{equation}
\tens{H}(\vec{b})=\frac{\partial^2 \mathcal{I}_{\mathcal{Q}}}{\partial \vec{b}^2}(\vec{b})=T\left(\int_X \vec{a} \otimes \vec{a} \rho_{\vec{b}} - \vec{b} \otimes \vec{b} \right)^{-1} - \tens{K}.
\end{equation}
The H\"older inequality and that $\vec{b}\otimes \vec{b}$ is positive semi-definite gives
\begin{equation}
\int_X \vec{a} \otimes \vec{a} \rho_{\vec{b}}- \vec{b} \otimes \vec{b}\leq \int_X \vec{a} \otimes \vec{a} \rho_{\vec{b}} \leq ||\vec{a}||_\infty^2 \tens{I}
\end{equation}
so that $\tens{H}(b)\geq \frac{T}{||a||_\infty^2}\tens{I-K}$. This gives that for $T>||a||_\infty^2 \lambda_{\max}(\tens{K})$ that $\tens{H}(\vec{b})$ is positive definite for all $\vec{b} \in \mathcal{Q}$ so that $\mathcal{I}_{\mathcal{Q}}$ is a strictly convex function on $\mathcal{Q}$, so that there exists at most one critical point. This implies that the isotropic state must be the global minimum and only critical point. Applying Proposition \ref{propGlobalMinsEquiv} gives the corresponding result for $\mathcal{I}_{\mathcal{P}}$. Due the the normalisation of the functions $(a_i)_{i=1}^k$ it holds that $\tens{H}(0)=T\mu(X)\tens{I-K}$. Consequently if $T>\frac{1}{\mu(X)}\lambda_{\max}(\tens{K})$ then $\tens{H}(\vec{0})$ is positive definite so that $\vec{b}=\vec{0}$ is a local minimum. Conversely, if $T<\frac{1}{\mu(X)}\lambda_{\max}(\tens{K})$ then $\tens{H}(\vec{0})$ has a negative eigenvalue so that $\vec{b}=\vec{0}$ is unstable. Applying Proposition \ref{propLocalMins} gives the desired result for the functional $\mathcal{I}_{\mathcal{P}}$.\qed
\end{proof}

\begin{remark}
If $L^\infty$-local minimisers are {\it a priori} known to be bounded away from zero and $+\infty$, then $L^\infty$ variations can obtain a critical point condition for $L^\infty$-local minimisers analogous to Corollaries \ref{corollaryCriticalPointCondition1} and \ref{corollaryCriticalPointCondition2}. Similarly, taking the second variation can provide estimates analogous to Proposition \ref{propLocalStability} to assess the stability under $L^\infty$ perturbations. The advantages of the methods presented in this work are twofold; firstly that all statements are in reference to $L^1$-local minimisers, a stronger condition which could not be obtained by two-sided variations since $\mathcal{P}(X)$ has empty relative interior in $L^1(X)$, and secondly the technicality that local minimisers must be shown to be bounded away from zero and $+\infty$ is removed. 
\end{remark}

\begin{remark}
\label{remarkMeanFieldInfinite}
More general mean-field like free energies can be defined by 
\begin{equation}
\mathcal{I}_{\mathcal{P}}(\rho)=\int_X \rho \ln \rho \, d\mu - \int_X \int_X K(t,s)\rho(t)\rho(s)\,d\mu(t)\,d\mu(s)
\end{equation}
with $K\in L^\infty(X \times X)$ a symmetric kernel. If, for example, $X$ is a connected, analytic, bounded, Riemannian manifold then the eigenvectors of the Laplace-Beltrami operator on $X$, denoted $(a_i)_{i=1}^\infty$, form a countable, dense, orthogonal basis for $L^2(X)$ with $a_i$ analytic for $i\in \mathbb{N}$ \cite{berard1986spectral}. By Prop. \ref{propPseudoHaarManifold} they must therefore form a pseudo-Haar set. This allows the decomposition of $K$ as 
\begin{equation}
\label{eqDecompose} K(t,s)=\sum\limits_{i,j=1}^\infty c_{ij}a_i(t)a_j(s),
\end{equation}
so that by truncating this series to finitely many terms the kernel can be approximated, and the approximating model fits into the framework presented in this paper. In many models related to liquid crystals this decomposition would be superfluous, as the kernel is assumed to have a decomposition of the form in \eqref{eqDecompose} with $c_{ij}$ non-zero for only finitely many $i,j$. For example, if $V_\lambda(X)$ denotes the eigenspace of the Laplace-Beltrami operator on $X$ with corresponding eigenvalue $\lambda$, then the constraint functions corresponding to the Maier-Saupe potential (dipolar potential, respectively) for nematic liquid crystals are in $V_{-6}(\mathbb{S}^2)$ ($V_{-2}(\mathbb{S}^2)$, respectively) \cite{fatkullin2005critical}, the McMillan model (see Subsubsection \ref{subsubsecMcMillan}) has two constraint functions, one in $V_{-6}(\mathbb{S}^2\times[0,1])$ and another in $V_{2\pi}(\mathbb{S}^2\times[0,1])$, and the Strayley model \cite{straley1974ordered} uses elements of $V_{-6}(\textup{SO}(3))$. 
\end{remark}

\begin{remark}
Propositions  \ref{propGlobalMinsEquiv}, \ref{propLocalMins} and \ref{propCriticalPointsEquivalent} presented together imply that globally stable, locally stable and unstable equilibrium points of the functionals on $\mathcal{P}(X)$ can be found by considering a macroscopic functional defined only on $\mathcal{Q}$, combined with the maximum entropy assumption.
\end{remark}

\subsection{Models with Spatial Inhomogeneities}
\label{subsecInhomo}

If, as in the Q-tensor theory of liquid crystals, one wishes to consider models with inhomogeneities in a domain $\Omega \subset \mathbb{R}^n$, then one could appeal to the calculus of variations and consider functions $b:\Omega \to \overline{\mathcal{Q}}$ (note that the closure of $\mathcal{Q}$ is taken for compactness properties), and investigate minimisers of the functional 
\begin{equation}
\mathcal{I}(\vec{b})=\int_\Omega W(\nabla \vec{b}(\vec{x}),\vec{b}(\vec{x})) + F(\vec{b}(\vec{x}))\,d\vec{x}
\end{equation}
in some appropriate function space, where $W:\mathbb{R}^{k\times n}\times \mathcal{Q}\to\mathbb{R}$ represents the energy of distortions, and $F:\overline{\mathcal{Q}} \to\mathbb{R}$ represents the free energy of a homogeneous system. However, if $F$ were to be bounded on $\partial\mathcal{Q}$, as is the case for a Landau expansion, then it is possible that minimisers could satisfy $\vec{b}(x)\in\partial\mathcal{Q}$ for all $\vec{x}$ in some set of positive measure, providing unphysical solutions. Furthermore this is undesirable from a mathematical perspective since the minimiser would not satisfy the Euler-Lagrange equation. However, if one considers a functional $\mathcal{I}_s$, given by 
\begin{equation}
\mathcal{I}_s(\vec{b})=\int_\Omega W(\nabla \vec{b}(\vec{x}),\vec{b}(\vec{x})) + \psi_s(\vec{b}(\vec{x}))+f(\vec{b}(\vec{x}))\,dx
\end{equation}
for some $f,W$ bounded away from $-\infty$, then provided finite energy configurations exist, it is immediate that minimisers satisfy $\vec{b}(\vec{x}) \in \mathcal{Q}$ for almost every $\vec{x}\in\Omega$. Furthermore, in particular simple models it is possible to show strict physicality, in the sense that there exists some compact set $K \subset \mathcal{Q}$ for which any global minimiser $\vec{b}$ satisfies $\vec{b}\in K$ almost everywhere, which has the consequence that global minimisers satisfy the Euler-Lagrange equation allowing PDE methods to be applied to the problem. This is shown by a projection method based similar to that in Ref. \refcite{ball2014equilibrium}.

\begin{lemma}
\label{lemmaPhysicality}
Let $g:\mathcal{Q}\to\mathbb{R}$ be Lipschitz continuous with Lipschitz constant $L$. Given $M\in \mathbb{R}$ define the set $K_M=\{\vec{b} \in \mathcal{Q}:\psi_s(\vec{b})\leq M\}$, and the nearest point projection $P_M:\mathcal{Q} \to K_M$. Furthermore given $\vec{b} \in \mathcal{Q}$ denote $\vec{b}_M = P_M\vec{b}$. Then there exists $M_0\in \mathbb{R}$ so that for all $M>M_0$ and $\vec{b} \in \mathcal{Q}$, \begin{equation}
\psi_s(\vec{b})+g(\vec{b})\geq \psi_s(\vec{b}_M)+g(\vec{b}_M).
\end{equation}
\end{lemma}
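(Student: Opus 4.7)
The inequality is trivial when $\vec{b} \in K_M$, since then $\vec{b}_M = \vec{b}$; so I assume $\vec{b}\notin K_M$, whence $\psi_s(\vec{b})>M$ and, by continuity of $\psi_s$, the projection $\vec{b}_M$ lies on the level set $\{\psi_s = M\}$. For $M$ larger than $\min_{\mathcal{Q}} \psi_s$ (which exists by strict convexity and the boundary blow-up of $\psi_s$), $K_M$ is a nonempty, closed (in $\mathbb{R}^k$, using the boundary blow-up to rule out limit points on $\partial\mathcal{Q}$), convex subset of $\mathcal{Q}$, so $P_M$ is well-defined on all of $\mathcal{Q}$.

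Using the Lipschitz property of $g$, it suffices to establish the stronger inequality
\[
\psi_s(\vec{b}) - \psi_s(\vec{b}_M) \geq L\,|\vec{b} - \vec{b}_M|
\]
for all sufficiently large $M$. Convexity of $\psi_s$ gives $\psi_s(\vec{b}) - \psi_s(\vec{b}_M) \geq \nabla \psi_s(\vec{b}_M) \cdot (\vec{b} - \vec{b}_M)$, and because $K_M$ is the smooth convex sublevel set $\{\psi_s \leq M\}$ with $\nabla\psi_s(\vec{b}_M)\neq 0$ (since $M > \min \psi_s$ and strict convexity forces the global minimiser to be the unique critical point), the outward normal cone at $\vec{b}_M$ is the ray $\{t\nabla\psi_s(\vec{b}_M): t\geq 0\}$. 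The variational characterisation of the nearest point projection onto a closed convex set places $\vec{b} - \vec{b}_M$ in that normal cone, so $\vec{b} - \vec{b}_M = s\nabla\psi_s(\vec{b}_M)$ for some $s>0$, and consequently
\[
\psi_s(\vec{b}) - \psi_s(\vec{b}_M) \geq |\nabla\psi_s(\vec{b}_M)|\,|\vec{b} - \vec{b}_M|.
\]
Everything then reduces to a uniform lower bound on $|\nabla\psi_s|$ along high level sets.

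The key remaining step, and the main technical obstacle, is to show: for every $L>0$ there exists $M_0$ with $|\nabla\psi_s(\vec{b})| \geq L$ whenever $\psi_s(\vec{b}) \geq M_0$. By Proposition \ref{propSingularFirstDerivative}, $\nabla\psi_s(\vec{b}) = \lambda(\vec{b})$, so I argue contrapositively. Suppose $|\lambda(\vec{b})| \leq L$. By the continuous dependence of $\alpha_\lambda$ on $\lambda$ (established in the lemma preceding Remark \ref{remarkShannonConstant}), $\alpha$ stays uniformly bounded, so $\alpha + \lambda \cdot \vec{a}(t)$ is uniformly bounded in $(\vec{b},t)$. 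Applying the continuous function $(\phi')^{-1}$ bounds $\rho_{\vec{b}}$ uniformly away from $0$ and $+\infty$; hence $\phi(\rho_{\vec{b}})$ is uniformly bounded, and so is $\psi_s(\vec{b}) = \int_X \phi(\rho_{\vec{b}})\,d\mu$ by some constant $M_0$. Taking $M \geq M_0$ yields $|\nabla\psi_s(\vec{b}_M)| \geq L$ on $\{\psi_s = M\}$, which combined with the previous display completes the proof.
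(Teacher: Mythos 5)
Your proof is correct, and it reduces to the same intermediate target as the paper's proof — a uniform lower bound on $|\nabla\psi_s|$ over the level set $\{\psi_s = M\}$, extracted via the normal-cone characterisation of the nearest-point projection onto the convex sublevel set $K_M$ and the convexity gradient inequality — but you establish that gradient bound by a genuinely different route. The paper argues by contradiction: if the lemma fails, a sequence $\vec{c}_k \in \partial K_{M^k}$ with $M^k \to \infty$ and $|\nabla\psi_s(\vec{c}_k)| \leq L$ must, by compactness of $\overline{\mathcal{Q}}$, accumulate at $\partial\mathcal{Q}$, and the paper then invokes the soft fact that a convex function blowing up at the boundary of a bounded open set forces its gradient to blow up there too. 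You instead give a direct, quantitative argument through the dual variables: $\nabla\psi_s = \lambda$ by Prop.~\ref{propSingularFirstDerivative}, the normalising multiplier $\alpha_\lambda$ depends continuously on $\lambda$ and is therefore bounded on the compact set $\{|\lambda| \le L\}$, whence $\rho_{\vec b} = (\phi')^{-1}(\alpha + \lambda\cdot\vec a)$ is uniformly bounded away from $0$ and $\infty$ in $L^\infty$, making $\psi_s(\vec b) = \int_X \phi(\rho_{\vec b})\,d\mu$ bounded by some $M_0$; the contrapositive gives the gradient lower bound. The paper's argument is shorter and purely geometric, requiring only convexity and the boundary blow-up of $\psi_s$; yours is constructive, producing $M_0$ from $L$ explicitly through the entropy and dual structure, and uses the boundary blow-up of $\psi_s$ only to ensure $K_M$ is closed in $\mathbb{R}^k$.
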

\begin{proof}
First note that the projection is well defined, since the convexity and continuity of $\psi_s$ ensure that $K_M$ is a closed convex set. Suppose that the result is false. Then there must exist sequences $M^k\in\mathbb{R}$, $\vec{b}_k\in\mathcal{Q}$ so that $\lim\limits_{k \to \infty}M^k=+\infty$, and defining $\vec{c}_k=P_{M^k}\vec{b}_k$, for each $k$ the inequality
$$\psi_s(\vec{c}_k)+g(\vec{c}_k)>\psi_s(\vec{b}_k)+g(\vec{b_k})$$
is satisfied. By taking a subsequence if necessary (not relabelled), assume that $(M^k)_{k\in\mathbb{N}}$ is an increasing sequence. Using the fact that $g$ is Lipschitz and elementary inequalities for convex functions, it holds that
\begin{equation}
\begin{split}
L|\vec{c}_k-\vec{b}_k| & \geq g(\vec{c}_k)-g(\vec{b}_k)\\
&> \psi_s(\vec{b}_k)-\psi_s(\vec{c}_k)\\
&\geq \nabla \psi_s(\vec{c}_k)\cdot (\vec{b}_k - \vec{c}_k).
\end{split}
\end{equation}
From the construction of $K_M$, $\partial K_M$ is a level set of $\psi_s$, so that $\nabla\psi_s(\vec{c}_k)$ is normal to the surface $\partial K_{M^k}$ at $\vec{c}_k$. Also from the properties of projections it holds that $\vec{b}_k - \vec{c}_k$ is normal to $\partial K_{M^k}$ at $\vec{c}_k$, with the same sign as $\nabla \psi_s(\vec{c}_k)$ in the sense that $\nabla\psi_s(\vec{c}_k) = s(\vec{b}_k-\vec{c}_k)$ for some $s>0$. This implies $\nabla \psi_s(\vec{c}_k)\cdot(\vec{b}_k-\vec{c}_k)=|\nabla\psi_s(\vec{c}_k)|\,|\vec{b}_k-\vec{c}_k|$. Combining this with the previous chain of inequalities gives 
\begin{equation}
\begin{split}
L|\vec{c}_k-\vec{b}_k| & \geq |\nabla \psi_s(\vec{c}_k)| \, |\vec{c}_k - \vec{b}_k|\\
\Rightarrow L & \geq |\nabla \psi_s (\vec{c}_k)|
\end{split}
\end{equation}
for all $k$. However since $\mathcal{Q}$ is bounded, a subsequence (not relabelled) can be taken so that $\vec{c}_k \to \vec{c}^*$, and since $\vec{c}_k\in \partial K_{M^k}$ for all $k$, this gives that $\vec{c}^* \in \partial \mathcal{Q}$. Since $\lim\limits_{\vec{b}\to \partial \mathcal{Q}}\psi_s(\vec{b}) = +\infty$, and $\psi_s$ is convex, this also implies that $\lim\limits_{\vec{b} \to \partial \mathcal{Q}}|\nabla \psi_s(\vec{b})|=+\infty$. Taking the limit as $k\to\infty$ of the inequality $L \geq |\nabla \psi_s(\vec{c}_k)|$ gives a contradiction, completing the proof.\qed
\end{proof}

\begin{theorem}
Let $\Omega \subset \mathbb{R}^n$. Consider the minimisation problem
\begin{equation}
\min\limits_{\vec{b} \in \mathcal{A}}\int_\Omega |\nabla \vec{b}(\vec{x})|^p + \psi_s(\vec{b}(\vec{x}))+g(\vec{b}(\vec{x}))\,dx,
\end{equation}
where $\mathcal{A}=\{\vec{b} \in W^{1,p}(\Omega;\overline{\mathcal{Q}}): \vec{b}|_{\partial\Omega}=\vec{b}_0\}\neq \emptyset$, $g:\overline{\mathcal{Q}}\to\mathbb{R}$ is Lipschitz continuous with Lipschitz constant $L$, $p>1$ and there exists some compact $K \subset \mathcal{Q}$ such that $\vec{b}_0(x)\in K$ for $\mathcal{H}^{n-1}$-almost every $x \in \partial \Omega$. Then there exists a global minimiser, and all global minimisers are strictly physical in the sense that $\psi_s(\vec{b})\in L^\infty(\Omega,\mathbb{R})$. In particular, any global minimiser $\vec{b}^*:\Omega \to \overline{\mathcal{Q}}$ satisfies the Euler-Lagrange equation for the energy functional if $g\in C^1(\mathcal{Q})$.
\end{theorem}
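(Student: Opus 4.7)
The plan is to combine the direct method of the calculus of variations with the projection scheme from Lemma \ref{lemmaPhysicality}. First I would produce a finite-energy competitor: since $K \subset \mathcal{Q}$ is compact and $\psi_s$ is continuous on $\mathcal{Q}$, there exists $M_1$ with $K \subset K_{M_1}$. For any $\vec{b}^\dagger \in \mathcal{A}$, the pointwise projection $P_{M_1} \circ \vec{b}^\dagger$ has boundary trace $\vec{b}_0$ (because $P_{M_1}$ fixes $K_{M_1} \supset K$), lies in $W^{1,p}(\Omega;K_{M_1})$ since $P_{M_1}$ is $1$-Lipschitz as projection onto a convex set, and satisfies $\psi_s \leq M_1$ with $g$ bounded, giving finite energy.

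Next I would run the direct method. Along a minimizing sequence $(\vec{b}_k)$ the gradient term yields coercivity in $W^{1,p}$ (since $\overline{\mathcal{Q}}$ is bounded, $\vec{b}_k$ is automatically bounded in $L^\infty$), so I extract a subsequence with $\vec{b}_k \rightharpoonup \vec{b}^*$ in $W^{1,p}$ and $\vec{b}_k \to \vec{b}^*$ pointwise a.e.; trace continuity preserves the Dirichlet condition. Weak lower semicontinuity of $\int_\Omega |\nabla \vec{b}|^p\,dx$ is standard. For the singular potential I extend $\psi_s$ by $+\infty$ on $\mathbb{R}^k \setminus \mathcal{Q}$; this extension is convex (Proposition \ref{propSingularPotentialWellDefined}), lower semicontinuous (interior continuity from Corollary \ref{corollarySingularCM}, and the blow-up corollary of the growth bound handles the boundary), and bounded below (since $\phi$ is bounded below), so Fatou applies to $\int_\Omega \psi_s(\vec{b}_k)\,dx$. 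Lipschitz continuity of $g$ on $\overline{\mathcal{Q}}$ together with dominated convergence handles the $g$-term. Hence $\vec{b}^*$ is a global minimizer.

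For strict physicality, given any global minimizer $\vec{b}^*$ I fix $M$ larger than both $M_0$ from Lemma \ref{lemmaPhysicality} and $\sup_{\vec{b}\in K}\psi_s(\vec{b})$, and set $\vec{b}^*_M(x):=P_M(\vec{b}^*(x))$. The trace is preserved since $\vec{b}_0 \in K \subset K_M$, and the chain rule for Sobolev compositions with Lipschitz maps gives $\vec{b}^*_M \in W^{1,p}$ with $|\nabla \vec{b}^*_M(x)| \leq |\nabla \vec{b}^*(x)|$ a.e. By Lemma \ref{lemmaPhysicality},
\[
\psi_s(\vec{b}^*(x)) + g(\vec{b}^*(x)) \geq \psi_s(\vec{b}^*_M(x)) + g(\vec{b}^*_M(x)) \quad \text{a.e.},
\]
so integrating and adding the gradient inequality, the energy of $\vec{b}^*_M$ is no greater than that of $\vec{b}^*$. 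A closer reading of the proof of Lemma \ref{lemmaPhysicality} shows that the pointwise inequality is strict wherever $\vec{b}^*(x) \notin K_M$, provided $M$ is large enough that $|\nabla \psi_s| > L$ on $\partial K_M$ (which holds since $|\nabla\psi_s| \to \infty$ at $\partial \mathcal{Q}$, as exploited inside that proof). Minimality then forces $\{\vec{b}^*(x)\notin K_M\}$ to have measure zero, so $\psi_s(\vec{b}^*) \in L^\infty(\Omega)$.

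Finally, assuming $g\in C^1(\mathcal{Q})$, since $\vec{b}^*$ takes values a.e.\ in the compact set $K_M \subset \mathcal{Q}$ on which $\psi_s$ is $C^2$ (Corollary \ref{corollarySingularCM}) with bounded derivatives, perturbations $\vec{b}^* + \tau \xi$ for $\xi \in C_c^\infty(\Omega;\mathbb{R}^k)$ remain in a compact subset of $\mathcal{Q}$ for small $\tau$, and differentiating the energy at $\tau=0$ under the integral yields the weak Euler--Lagrange equation. The main obstacle in this plan is the strict physicality step: the stated form of Lemma \ref{lemmaPhysicality} only gives a non-strict inequality, so one must either sharpen it to a strict pointwise inequality off $K_M$ by choosing $M$ large enough that the gradient lower bound in its proof becomes strict, or use a two-scale argument with thresholds $M < M'$. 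The auxiliary chain-rule estimate $|\nabla(P_M \circ \vec{b}^*)| \leq |\nabla \vec{b}^*|$ is a standard Sobolev--Lipschitz composition fact but deserves explicit invocation.
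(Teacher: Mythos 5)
Your proposal is correct and follows the paper's route: direct method for existence, then the projection $P_M$ from Lemma \ref{lemmaPhysicality} for strict physicality (the paper does indeed cite Lemma 10 of Ball--Majumdar for precisely the estimate $|\nabla(P_M\circ\vec{b})|\leq|\nabla\vec{b}|$ that you flagged as deserving explicit invocation). The one place you diverge is the ``main obstacle'' you identify. You propose sharpening Lemma \ref{lemmaPhysicality} to a pointwise strict inequality off $K_M$ by arranging $|\nabla\psi_s|>L$ on $\partial K_M$; that works, but the paper avoids any reopening of the lemma's proof by a simpler contradiction. Assume $\psi_s(\vec{b})\notin L^\infty(\Omega)$; then $\psi_s(\vec{b}_M)+g(\vec{b}_M)$ is a.e.\ bounded by $M+\sup_{K_M}|g|$, while the contradiction hypothesis furnishes a set of positive measure on which $\psi_s(\vec{b})+g(\vec{b})$ exceeds that bound, so the non-strict pointwise inequality from the lemma integrates to a strict one; adding the gradient estimate gives a competitor with strictly smaller energy. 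So the obstacle you worried about does not in fact arise: the bounded-versus-unbounded comparison does the work, and is essentially the ``two-scale'' alternative you mentioned, taken to its simplest form. The rest of your write-up (finite-energy competitor via projection onto $K_{M_1}\supset K$, lower semicontinuity of the extended $\psi_s$, and the Euler--Lagrange variation inside a compact subset of $\mathcal{Q}$) matches the paper's unstated details and is sound.
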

\begin{proof}
The existence of minimisers follows from a standard direct method argument \cite{dacorogna2007direct}. To see that the global minimiser must be strictly physical, assume for the sake of contradiction that there exists a global minimiser $\vec{b} \in \mathcal{A}$ such that $\psi_s(\vec{b})$ is unbounded on $\Omega$. Let $M>M_0$ (as defined in Lemma \ref{lemmaPhysicality}) and $\vec{b}_M$ be the projection of $\vec{b}$ onto the set $K_M$ as defined in Lemma \ref{lemmaPhysicality}. Take $M$ sufficiently large so that $K \subset K_M$, so that $\vec{b}_M$ satisfies the same boundary conditions as $\vec{b}$. Then $\psi_s(\vec{b}_M)+g(\vec{b}_M) \leq \psi_s(\vec{b})+g(\vec{b})$ almost everywhere in $\Omega$ by Lemma \ref{lemmaPhysicality}. Furthermore, since the term on the left is bounded, and by assumption the term on the right is unbounded this equality is strict on a set of positive measure. Finally since $|\nabla P\vec{b}| \leq |\nabla \vec{b}|$ for any function in $W^{1,p}$ and projection $P$ onto a convex set (see Lemma 10 in Ref. \refcite{ball2014equilibrium}), it holds that 
\begin{equation}
\int_\Omega |\nabla \vec{b}_M(\vec{x})|^p + \psi_s(\vec{b}_M(\vec{x}))+g(\vec{b}_M(\vec{x}))\,d\vec{x} < \int_\Omega |\nabla \vec{b}(\vec{x})|^p + \psi_s(\vec{b}(\vec{x}))+g(\vec{b}(\vec{x}))\,d\vec{x},
\end{equation}
contradicting that $\vec{b}$ is a global minimiser. Since the minimiser is bounded away from $\partial \mathcal{Q}$ it is possible to take smooth variations so that the minimiser satisfies the Euler-Lagrange equation. \qed
\end{proof}

\subsection{Approximation of the Singular Potential by Everywhere Defined Functions}
In some situations it may be preferable to approximate the singular potential by a globally defined function. In particular, as mentioned in Subsection \ref{subsecInhomo}, in models with inhomogeneities the blow up property can pose issues with minimisers satisfying the Euler-Lagrange equation. One tool kit for providing such an approximation is to use the Landau theory \cite{toledano1987landau}, which argues that the free energy of a system must be analytic in the order parameters, therefore it is possible to replace the free energy with a polynomial in the order parameters that respects the symmetry of the system. Typically, this polynomial will be taken to fourth order. However, as will be shown in this section, the Landau expansion is not necessarily compatible with free energies of the form given in Subsection \ref{SubsecMeanField} , since the Taylor approximation does not necessarily preserve shape properties possessed by the singular potential. This section will only be concerned with the Shannon entropy, due to the relative simplicity of its Taylor approximation as well as its physical relevance.

\subsubsection{One Dimensional Examples and Counterexamples}
The fourth order Taylor approximation about zero of the singular potential is derived in Appendix \ref{appendixTaylorExpand}. For simplicity consider only a single constraint function, and state space $X=[-1,1]$. Define $m_i=\frac{1}{2}\int_X a(x)^i\,dx$. The the fourth order Taylor approximation $\psi_s^4:\mathbb{R}\to\mathbb{R}$ is given by 
\begin{displaymath}
\begin{split}
\psi_s^4(b)=&\psi_s(0)+\sum\limits_{j=2}^4\frac{d^j \psi_s}{db^j}(0) \frac{b^j}{j!}\\
=&\frac{1}{2m_2}b^2 -\frac{m_3}{6m_2^3}b^3+\frac{3m_3^2-m_2m_4+3m_2^3}{24m_2^5}b^4+\psi_s(0).
\end{split}
\end{displaymath}
If the Taylor approximation is to represent a kind of macroscopic version of entropy, and approximate the singular potential, then at the very least it should be convex, posses a single local minimum, and be coercive so that the approximation blows up to $+\infty$ as $b \to \pm \infty$. Coercivity perhaps the most important property, since if it fails then energy minimisation is not possible. The second condition prevents the appearance of ``phantom" minimisers for the entropy which should not exist. Finally convexity is a useful property in minimisation problems that one would want to inherit from the singular potential. In the one dimensional case presented, these three conditions have a chain of implication, so that convexity implies the existence of a single critical point, and a single critical point implies coercivity. Note that these implications rely on the fact that $\psi_s^4$ is convex in some neighbourhood of the origin, and that it is not linear. Similarly since $\psi_s^4$ is a polynomial that is not affine, convexity implies strict convexity.

Fortunately in the one dimensional case, verifying if these properties hold for a given example is relatively straightforward. In order to be coercive, the coefficient of $b^4$ must be positive, which gives the condition 
$$d_1=3m_3^2-m_2m_4-3m_2^3>0.$$
If $b\neq 0$ is a critical point of $\psi_s^4$, this implies the existence of a non-zero real solution to 
$$\frac{d\psi_s^4}{db}(b)=b\left(\frac{1}{m_2}-\frac{1}{2m_2^3}m_3b+\frac{1}{6m_2^5}\left(3m_2^3-m_2m_4+3m_3^2\right)b^2\right)=0.$$
By considering the discriminant of the quadratic, this means that $\psi_s^4$ possesses no non-trivial critical points if and only if 
$$d_2=72m_2^3-24m_2m_4+63m_3^2>0.$$
Finally for convexity, the second derivative of $\psi_s^4$ is a quadratic in $b$ given by 
$$\frac{d^2\psi_s^4}{db^2}(b)=\frac{1}{m_2}-\frac{m_3}{m_2^3}b+\frac{1}{2m_2^5}\left(3m_2^3-m_2m_4+3m_3^2\right)b^2,$$
so by the same argument $\psi_s^4$ is convex if and only if 
$$d_3=6m_2^3-2m_2m_4+5m_3^2>0.$$
Within the chain of implication that
$$\text{Convexity}\Rightarrow \text{A single critical point} \Rightarrow \text{Coercivity},$$ 
none of the implications are equivalent for the singular potential. Using the domain $X=[-1,1]$, there are four examples that produces all four possibilities.

\begin{figure}[H]
\begin{center}
\begin{tabular}{|m{3cm}|m{0.5cm}|m{3cm}|m{1.9cm}|m{1.3cm}|@{}m{0cm}@{}}
\hline
 $a(x)=$ &	$x$ 	&	 $x^4-7x^3-2x^2+3x+\frac{7}{15}$	&	$ 7x^3-x^2+x+\frac{1}{3}$	&	 $x^3$&	\\[1.5ex] \hline
	$m_2=$ 	&	$\frac{1}{3}$	&	$\frac{904}{525}$	&	$\frac{92}{9}$	&	$\frac{1}{7}$&	\\[1.2ex] \hline
	$m_3=$	&	$0$	&	$\frac{-6796096}{3378375}$	&	 $\frac{-12112}{945}$	&	$0$	&\\[1.2ex] \hline
	$m_4=$	&	$\frac{1}{5} $	&	$\frac{4297061248}{287161875}$	&	 $\frac{6938192}{19305}$	&	$\frac{1}{13}$	&\\[1.2ex] \hline
	$d_1=$	&	$\frac{2}{45}$	&	$\frac{765059068785152}{452732233078125}$	&	$\frac{998000512}{42567525}$	&	$\frac{-10}{4459}$	&\\[1.2ex] \hline
	Coercive 	&Yes	&Yes		&	Yes 	&No		&\\[1.2ex] \hline
	$d_2=$	&	$\frac{16}{15} $	&	$\frac{624228970176512}{150910744359375}$	&	$\frac{-618770176}{675675}$	&	$\frac{-240}{4459}$	&\\[1.2ex] \hline
	Single Critical Point	&	Yes&Yes		&No	 	&No		&\\[1.2ex]\hline
	$d_3=$	&	$\frac{4}{45}$	&	$\frac{-905889167393792}{1358196699234375}$	&	$\frac{-2998034944}{25540515}$	&	$\frac{-20}{4459}$	&\\[1.2ex] \hline
	Convex	&	Yes&No		&	No 	&No		&\\[1.2ex]\hline
\end{tabular}
\caption{Table of examples and discriminant values}
\end{center}
\end{figure}

As these simple examples show, the Taylor expansion approach to providing a polynomial approximation to the free energy will not necessarily work in general, and this is perhaps unsurprising given that the Taylor approximation is generally only a local approximation of a function. However, the following Weierstrass-type result shows that convex polynomial approximation is possible.

\begin{proposition}[From  Ref. \refcite{shvedov1981coconvex}]
Let $M \subset \mathbb{R}^k$ be convex and compact, and $f:M \to \mathbb{R}$ be convex and continuous. Then for every $\epsilon >0$ there exists some polynomial $p_\epsilon$ which is convex on $\mathbb{R}^k$ such that 
$$\sup\limits_{\vec{x} \in M}|f(\vec{x})-p_\epsilon(\vec{x})|<\epsilon.$$
\end{proposition}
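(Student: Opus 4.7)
The plan is to smooth and strongly convexify $f$, approximate the resulting function in $C^2$ on a large ball by a polynomial, and then globalise convexity by adding a high-order polynomial correction whose Hessian dominates at infinity.

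First, extend $f$ to a continuous convex function $\tilde f$ on all of $\mathbb{R}^k$ (this is standard; one can, for example, take a suitable supremum of affine minorants of $f$ on $M$, or apply a Whitney-type convex-preserving extension). Convolving with a standard smooth nonnegative mollifier $\eta_\delta$ gives $f_\delta := \tilde f \ast \eta_\delta$, which is $C^\infty$ and convex (convolution of a convex function with a nonnegative kernel preserves convexity), and converges uniformly to $\tilde f$ on compact sets. Set $g := f_\delta + \alpha \|\vec{x}\|^2$ for a small $\alpha > 0$; this is $C^\infty$ on $\mathbb{R}^k$ with Hessian at least $2\alpha I$ everywhere, and for $\alpha, \delta$ sufficiently small, $\|g - f\|_{L^\infty(M)} < \epsilon/3$.

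Next, fix a radius $R_0$ with $M \subset B_{R_0}$. By the standard multivariate Weierstrass theorem for $C^2$ functions on compact sets, there is a polynomial $p$ with $\|p - g\|_{C^2(\bar B_{R_0})} < \alpha$; the smallest eigenvalue of $D^2 p(\vec{x})$ is then at least $\alpha$ for all $\vec{x} \in \bar B_{R_0}$, so $p$ is strictly convex on $B_{R_0}$, and $\|p - f\|_{L^\infty(M)} < \epsilon/3 + \alpha$. Define
\begin{equation}
p_\epsilon(\vec{x}) := p(\vec{x}) + \lambda(1+\|\vec{x}\|^2)^m,
\end{equation}
where $m > \deg(p)/2$ is an integer and $\lambda > 0$ is to be chosen. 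A direct computation shows that $(1+\|\vec{x}\|^2)^m$ is convex on $\mathbb{R}^k$, with minimum Hessian eigenvalue at least $2m(1+\|\vec{x}\|^2)^{m-1}$, growing like $\|\vec{x}\|^{2m-2}$ at infinity. On $\bar B_{R_0}$ both summands are convex, so $p_\epsilon$ is convex there for any $\lambda > 0$. Outside $B_{R_0}$, the operator norm of $D^2 p$ is at most $C_p \|\vec{x}\|^{\deg(p)-2}$ for some constant $C_p$ depending only on $p$; since $m > \deg(p)/2$, the Hessian of $\lambda(1+\|\vec{x}\|^2)^m$ dominates $-D^2 p$ in the positive semi-definite order provided $\lambda$ exceeds a certain finite threshold $\Lambda = \Lambda(p, R_0, m)$, in which case $p_\epsilon$ is convex on all of $\mathbb{R}^k$.

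The main obstacle is ensuring $\lambda > \Lambda$ while simultaneously $\lambda(1+R_M^2)^m < \epsilon/3$ (so that the correction perturbs $f$ on $M$ by less than $\epsilon/3$, where $R_M := \max_{\vec{x} \in M}\|\vec{x}\|$). The parameter $R_0$ supplies the needed freedom: a direct estimate gives $\Lambda \leq C_p R_0^{\deg(p)-2}/[2m(1+R_0^2)^{m-1}]$, which tends to zero as $R_0 \to \infty$ since $2(m-1) > \deg(p)-2$. Thus enlarging $R_0$, at the cost of needing a higher-degree polynomial approximant $p$ from the $C^2$-Weierstrass step, makes the admissible window for $\lambda$ non-empty, and the resulting $p_\epsilon$ is a convex polynomial on $\mathbb{R}^k$ with $\sup_{\vec{x} \in M}|p_\epsilon(\vec{x}) - f(\vec{x})| < \epsilon$.
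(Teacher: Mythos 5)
The paper states this proposition with a citation to Shvedov and gives no proof of its own, so there is nothing internal to compare against; judged on its own terms, however, your proposal has a genuine gap at the very first step, and a secondary weakness in the way the threshold $\Lambda$ is controlled.

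The first step asserts that a continuous convex $f$ on a convex compact $M$ can be extended to a continuous (finite-valued) convex $\tilde f$ on all of $\mathbb{R}^k$, and suggests the supremum of affine minorants as one way to do it. This is false in general. Take $k=1$, $M=[0,1]$, $f(x)=-\sqrt{x(1-x)}$: this is continuous and convex on $M$, but its subdifferential at the endpoint $0$ (relative to $M$) is empty, since the one-sided derivative diverges to $-\infty$. Any finite convex $g$ on $\mathbb{R}$ has $\partial g(0)\neq\emptyset$, and if $g|_{[0,1]}=f$ then $\partial g(0)\subset\partial f(0)=\emptyset$, a contradiction; so no extension exists. Relatedly, the supremum of affine minorants of $f$ on $M$ equals $+\infty$ at every point of $\mathbb{R}^k\setminus M$ in this example, so it is not a finite extension, and it cannot be mollified. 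The repair is to \emph{approximate before extending}: since $f$ is the pointwise supremum of its affine minorants on the compact $M$ and is continuous there, for any $\eta>0$ a Dini/compactness argument yields finitely many affine functions $\ell_1,\dots,\ell_N$ with $\max_i\ell_i\le f\le\max_i\ell_i+\eta$ on $M$. The polyhedral function $h=\max_i\ell_i$ is finite, convex and globally Lipschitz on $\mathbb{R}^k$, and you can now run your mollification and strong-convexification on $h$ instead of on a nonexistent $\tilde f$. (The global Lipschitz bound on $h$ also gives you uniform control of $D^2(h\ast\eta_\delta)$, which you will want for the next issue.)

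The second weakness is the claim that $\Lambda\to 0$ as $R_0\to\infty$. The quantities $\deg p$ and $C_p$ both change with $R_0$ (you acknowledge the degree grows), and nothing in the estimate $\Lambda\le C_pR_0^{\deg p-2}/\bigl[2m(1+R_0^2)^{m-1}\bigr]$ controls $C_p$ or the degree as a function of $R_0$, so the stated limit is unjustified. A cleaner route is to \emph{fix} $R_0$ once and for all with $R_0>\sqrt{1+R_M^2}$ and $M\subset B_{R_0}$, fix $p$, and then note that because $\|D^2p\|_{\mathrm{op}}\le C$ on $\bar B_{R_0}$ (with $C$ controlled by $g$ and $\alpha$), a Chebyshev-type one-dimensional comparison on rays gives $\|D^2p(\vec x)\|_{\mathrm{op}}\le C\bigl(2\|\vec x\|/R_0\bigr)^{d-2}$ for $\|\vec x\|\ge R_0$, where $d=\deg p$. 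With this bound the threshold satisfies $\Lambda\le C\,2^{d-2}/\bigl(2m R_0^{2m-2}\bigr)$, and then $\Lambda(1+R_M^2)^m\le \dfrac{C\,2^{d-2}}{2m}\bigl((1+R_M^2)/R_0^2\bigr)^{m-1}(1+R_M^2)\to 0$ as $m\to\infty$. So you should send $m\to\infty$ at fixed $R_0$, not $R_0\to\infty$ at fixed degree. With these two corrections the overall strategy (polyhedral approximation, mollification plus quadratic strong convexification, $C^2$-Weierstrass on a ball, and globalisation by a dominating high-order convex correction) is sound.
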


The main limitations of this result however are that the approximation can only be performed on compact subsets of $\mathcal{Q}$, and also, much like the classical Weierstrass result, the proof is non-constructive. It should also be noted that this result requires the function to be only continuous, rather than analytic, so that the Landau theory's assumption of an analytic free energy is unnecessary. One might hope that it is possible to establish a necessary and sufficient condition for the fourth order Taylor approximation to be convex, but in general the problem of establishing if a given polynomial is convex is an NP-hard problem \cite{ahmadi2013np}. Rather than pursuing a polynomial approximation, the Yosida-Moreau regularisation may be more appropriate, due to its shape preservation properties.

\begin{definition}
For $J>0$, define the Yosida-Moreau regularisation of $\psi_s$, denoted $\psi^J$ by 
\begin{equation}
\label{eqYMdef}
\psi^J(\vec{b})=\min\limits_{\tilde{\vec{b}} \in \mathcal{Q}}\psi_s(\vec{b})+\frac{J}{2}|\tilde{\vec{b}}-\vec{b}|^2.
\end{equation}
\end{definition}

The following proposition outlines several of the key properties which suggest that the Yosida-Moreau approximation is an appropriate approximation for the singular potential. The results presented are for the general entropy-like objective functions and pseudo-Haar constraint functions. 

\begin{proposition}
\begin{enumerate}
\item \label{itemMins} The Yosida-Moreau approximation preserves minima, in the sense that for all $J>0$, $\min\psi^J=\psi^J(\vec{0})=\psi_s(\vec{0})=\min\psi_s$. For all $\vec{b} \in \mathbb{R}^k$, $J>0$, the minimisation problem defining $\psi^J$, as defined in Equation \eqref{eqYMdef}, admits a unique solution denoted $G_J(\vec{b})$.
\item \label{itemConv} For every $\vec{b} \in \mathbb{R}^k$, $\psi^J(\vec{b})\nearrow \psi_s(\vec{b})$, where the limit is infinite for $\vec{b} \not\in\mathcal{Q}$. 
\item \label{itemRegular} For all $J>0$, $\psi^J$ is a convex, differentiable function. Furthermore, the gradient is Lipschitz with Lipschitz constant $J$. The derivative of $\psi^J$ can be given in terms of $G_J(\vec{b})$ by 
\begin{equation}
\frac{\partial \psi^J}{\partial b}(\vec{b})=J\left(\vec{b}-G_J(\vec{b})\right)=\lambda\left((G_J(\vec{b})\right).
\end{equation}
\item \label{itemDual} The Yosida-Moreau approximation can be evaluated numerically by the dual problem,
\begin{equation}
\psi^J(\vec{b})=\max\limits_{\alpha \in \mathbb{R},\lambda \in \mathbb{R}^k} \alpha + \lambda \cdot \vec{b} -\int_X \phi^*(\alpha+\lambda\cdot a)\,d\mu -\frac{1}{2J}|\lambda|^2.
\end{equation}
\end{enumerate}
\end{proposition}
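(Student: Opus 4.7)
The plan is to extend $\psi_s$ by $+\infty$ outside $\mathcal{Q}$ so that $\psi^J$ is the classical Moreau envelope of a proper lower semicontinuous convex function on $\mathbb{R}^k$; the four claims then follow from standard Moreau envelope theory combined with specific properties of $\psi_s$ established earlier in the paper.

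For the first claim, the map $\tilde{\vec{b}} \mapsto \psi_s(\tilde{\vec{b}}) + \frac{J}{2}|\tilde{\vec{b}} - \vec{b}|^2$ is strictly convex and coercive on $\mathcal{Q}$ (coercivity from the uniform blow-up of $\psi_s$ near $\partial \mathcal{Q}$), giving a unique minimiser $G_J(\vec{b})$. Next, $\psi_s$ attains its global minimum at $\vec{b} = \vec{0}$ by Jensen applied to the uniform distribution, recalling the normalisation $\int_X a_i\,d\mu = 0$. Hence $\psi^J(\vec{b}) \geq \inf_{\tilde{\vec{b}}} \psi_s(\tilde{\vec{b}}) = \psi_s(\vec{0})$ for every $\vec{b}$; taking $\tilde{\vec{b}} = \vec{0}$ in the infimum at $\vec{b} = \vec{0}$ yields $\psi^J(\vec{0}) \leq \psi_s(\vec{0})$, so $\vec{0}$ minimises $\psi^J$ and all four values in the chain of equalities agree.

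For the second claim, monotonicity in $J$ is immediate from the definition. For $\vec{b} \in \mathcal{Q}$, taking $\tilde{\vec{b}} = \vec{b}$ gives $\psi^J(\vec{b}) \leq \psi_s(\vec{b})$, and continuity of $\psi_s$ together with $G_J(\vec{b}) \to \vec{b}$ (which follows from the optimality condition and boundedness of $\nabla \psi_s$ on compact subsets of $\mathcal{Q}$) gives $\psi^J(\vec{b}) \to \psi_s(\vec{b})$. The main obstacle is $\vec{b} \not\in \mathcal{Q}$. If $\vec{b} \not\in \overline{\mathcal{Q}}$, then $|G_J(\vec{b}) - \vec{b}| \geq d(\vec{b}, \overline{\mathcal{Q}}) > 0$, so the penalty term diverges. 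The boundary case $\vec{b} \in \partial \mathcal{Q}$ I would handle by contradiction: bounded $\psi^J(\vec{b})$ forces $\psi_s(G_J(\vec{b}))$ bounded; by the uniform blow-up corollary, the $G_J(\vec{b})$ would then lie in a compact subset of $\mathcal{Q}$, hence at uniform positive distance from the boundary point $\vec{b}$, making the quadratic penalty explode, a contradiction.

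The third claim is standard Moreau envelope theory: the envelope of a proper lsc convex function is convex and differentiable with $J$-Lipschitz gradient $\nabla \psi^J(\vec{b}) = J(\vec{b} - G_J(\vec{b}))$. The identity $J(\vec{b} - G_J(\vec{b})) = \lambda(G_J(\vec{b}))$ follows from the inner first-order condition $\nabla \psi_s(G_J(\vec{b})) + J(G_J(\vec{b}) - \vec{b}) = 0$ combined with Proposition \ref{propSingularFirstDerivative}, which identifies $\nabla \psi_s$ with $\lambda$. For the final claim I would use Fenchel duality: $\psi^J$ is the infimal convolution of $\psi_s$ with $\frac{J}{2}|\cdot|^2$, so $(\psi^J)^* = \psi_s^* + \frac{1}{2J}|\cdot|^2$ and by biconjugacy $\psi^J(\vec{b}) = \sup_\lambda[\lambda \cdot \vec{b} - \psi_s^*(\lambda) - \frac{1}{2J}|\lambda|^2]$. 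The conjugate $\psi_s^*(\lambda)$ is evaluated by writing $\psi_s$ in its primal form, swapping the supremum over $\vec{b}$ with the minimum over $\rho \in \mathcal{P}(X)$, Lagrange-multiplying the normalisation constraint by $\alpha$, and computing the pointwise inner optimum in $\rho$ in terms of $\phi^*$. Merging the resulting supremum over $\alpha$ with the outer supremum over $\lambda$ gives the joint dual formula claimed in the proposition.
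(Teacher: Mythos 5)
Your proposal is correct, and it is more self-contained than the paper's own proof, which gives only the one-line observation for item 1 (``testing $\tilde{\vec{b}}=\vec{0}$'') and delegates items 2--4 to citations of Moreau, Br\'ezis and Decarreau \emph{et al.} Your framing---extend $\psi_s$ by $+\infty$ on $\mathbb{R}^k\setminus\mathcal{Q}$, note this gives a proper lower semicontinuous convex function (the blow-up corollary is exactly what guarantees lower semicontinuity on $\partial\mathcal{Q}$), and apply classical Moreau-envelope theory---is precisely the machinery those references use, so you are not taking a different route so much as filling in what the paper leaves implicit. Two small points worth tidying if you write this up. First, in item 1 the paper's ``immediate from the definition'' hides a step that you correctly make explicit: you need $\psi_s(\vec{0})=\min\psi_s$ (via Jensen and the normalisation $\int_X a_i\,d\mu=0$, so that the uniform distribution generates $\vec{0}$) before the sandwich $\min\psi_s\le\psi^J(\vec{b})$ and $\psi^J(\vec{0})\le\psi_s(\vec{0})$ closes. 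Second, in item 4 the phrase ``swapping the supremum over $\vec{b}$ with the minimum over $\rho$'' is slightly misleading: since $-\min_\rho=\max_\rho(-\cdot)$, the computation of $\psi_s^*(\lambda)$ is a double supremum over $(\vec{b},\rho)$ with the linear link $\vec{b}=\int_X\vec{a}\rho\,d\mu$ eliminating $\vec{b}$; the only genuine minimax interchange needed is the Lagrangian one for the normalisation constraint $\int_X\rho\,d\mu=1$, which is justified by Slater (the uniform density is strictly feasible). An equally clean alternative for item 4, closer to Decarreau's presentation, is to substitute the paper's dual representation $\psi_s(\tilde{\vec{b}})=\max_{\alpha,\lambda}\bigl[\alpha+\lambda\cdot\tilde{\vec{b}}-\int_X\phi^*(\alpha+\lambda\cdot\vec{a})\,d\mu\bigr]$ directly into the definition of $\psi^J$, interchange $\min_{\tilde{\vec{b}}}$ with $\max_{\alpha,\lambda}$, and minimise the resulting quadratic in $\tilde{\vec{b}}$ explicitly; this produces the term $-\tfrac{1}{2J}|\lambda|^2$ without first computing $\psi_s^*$.
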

\begin{proof}
The preservation of minima in statement \ref{itemMins} is immediate from the definition, by testing $\tilde{\vec{b}}=\vec{0}$. The existence and uniqueness of a minimiser follow from the coercivity and strict convexity of the objective function. Statement \ref{itemConv} can be found in Ref. \refcite{moreau1965proximite}. Statement \ref{itemRegular} can be found in Ref. \refcite{brezis1973ope}. Statement \ref{itemDual} is given in Ref. \refcite{decarreau1992dual}.\qed
\end{proof}
\begin{remark}
The previous proposition has several consequences. Firstly, statements 1 and 2 say that $\psi^J$ approximates $\psi_s$, as well as preserving desirable shape properties. Statement 3 gives that $\psi^J$ is sufficiently regular for first order methods to be used. Lastly, statement 4 shows that the dual optimisation problem defining to $\psi^J$ is, at face value, no harder to approach than the optimisation problem defining $\psi_s$.
\end{remark}

\subsubsection{The McMillan Model}\label{subsubsecMcMillan}
As an illustrative and physically meaningful example consider the McMillan model for Isotropic-Nematic-Smectic-A phase transitions \cite{mcmillan1971simple}. This is a mean field theory as described previously where the state space is $X=\mathbb{S}^2\times [0,1]$, and two constraint functions are given by 
\begin{displaymath}
\begin{split}
a_1(\vec{p},x)=&\frac{1}{2}\left(3(\vec{p}\cdot \vec{e}_1)^2-1\right),\\
a_2(\vec{p},x)=&\frac{1}{2}\left(3(\vec{p}\cdot \vec{e}_1)^2-1\right)\cos(2\pi x).
\end{split}
\end{displaymath}
Here $\vec{e}_1$ is a unit vector, physically corresponding to the orientation of the material. Due to the rotational symmetry of the constraint functions, it is possible to consider only state variables $(\theta,x)\in [0,\pi]\times[0,1]$, where $\cos(\theta)=\vec{p}\cdot \vec{e}_1$. This approach views $X$ as equivalent to a subset of $\mathbb{R}^2$ with measure $d\mu(\theta,x)=2\pi\sin(\theta)\,d\theta\,dx$. The constraint functions are analytic and linearly independent, and since the measure $\mu$ has the same null sets as the Lebesgue measure, the pseudo-Haar condition is satisfied. These constraints give two order parameters, denoted
\begin{displaymath}
\begin{split}
S&=2\pi\int_{0}^1\int_{-\pi}^\pi a_1(\theta,x)\rho(\theta,x)\,d\theta\,dx\\
\sigma &=2\pi\int_{0}^1\int_{-\pi}^\pi a_2(\theta,x)\rho(\theta,x)\,d\theta\,dx.
\end{split}
\end{displaymath} Loosely speaking $S$ corresponds to the degree of orientational order of the molecules, and $\sigma$ represents a coupling between the order of the molecules and the location of their centre of mass. If $S=\sigma=0$, then the sample is in an isotropic phase. If $S\neq 0$ and $\sigma =0$ then it is a nematic phase, and if $S\neq 0$ and $\sigma \neq 0$ then it is a smectic A phase. Before any analysis can be performed, the set of physical moments will be established.
\begin{proposition}
The set $\mathcal{Q}$ for the constraint functions of the McMillan model is given by
\begin{equation}
\mathcal{Q}=\left\{(S,\sigma)\in\mathbb{R}^2:S\in\left(-\frac{1}{2},1\right),|\sigma|<\frac{S+2}{3}\right\}.
\end{equation}
\end{proposition}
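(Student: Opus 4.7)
The approach is to apply Theorem \ref{theoremNecSufCond}, which characterises $\mathcal{Q}$ as the set of $\vec{b}\in \mathbb{R}^2$ satisfying $\vec{b}\cdot \vec{u} < S_{\vec{u}}$ for every $\vec{u}\in\mathbb{S}^1$. Since $\vec{a}$ is continuous and the measure $2\pi\sin\theta\,d\theta\,dx$ has full support, $S_{\vec{u}}$ equals the maximum of $\vec{u}\cdot \vec{y}$ over the range of $\vec{a}$, or equivalently over its closed convex hull. Combined with the general fact that $\vec{b}$ strictly satisfies $\vec{b}\cdot\vec{u} < S_{\vec{u}}$ for every $\vec{u}\in\mathbb{S}^1$ if and only if $\vec{b}$ lies in the interior of this hull (compactness of $\mathbb{S}^1$ and continuity of $\vec{u}\mapsto S_{\vec{u}}-\vec{b}\cdot\vec{u}$), the problem reduces to computing the convex hull of the range of $\vec{a}$.

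Introduce auxiliary variables $r=\tfrac{1}{2}(3\cos^2\theta-1)$ and $c=\cos(2\pi x)$, so that $\vec{a}(\theta,x)=(r,rc)$. As $\theta$ ranges over $[0,\pi]$ and $x$ over $[0,1]$, $r$ sweeps $[-1/2,1]$ and $c$ sweeps $[-1,1]$ independently, so the range of $\vec{a}$ is the ``bowtie''
\begin{equation*}
B=\{(u,v)\in\mathbb{R}^2 : -1/2\leq u\leq 1,\ |v|\leq |u|\},
\end{equation*}
the union of two triangles meeting at the origin with outer vertices $(1,1)$, $(1,-1)$, $(-1/2,1/2)$, $(-1/2,-1/2)$. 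I would then verify that the convex hull $T$ of $B$ is the trapezoid with these four vertices: each vertex is extreme in $T$ and lies in $B$; conversely, each triangular half of $B$ is contained in the convex hull of three of the four outer vertices together with the origin, and the origin lies in $T$. Hence $\overline{\textup{conv}}\,B=T$.

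The polytope $T$ has four edges, with outward unit normals $(1,0)$, $(-1,0)$, $(-1,3)/\sqrt{10}$, $(-1,-3)/\sqrt{10}$, attaining corresponding support values $S_{\vec{u}}=1,\,1/2,\,2/\sqrt{10},\,2/\sqrt{10}$ at the appropriate vertices. Strictly satisfying the four edge inequalities rearranges to $-1/2<S<1$ and $|\sigma|<(S+2)/3$. Any other outward unit normal of a supporting half-space of $T$ lies in the positive cone generated by two adjacent edge normals, so strict satisfaction of the four edge inequalities implies $\vec{b}\cdot\vec{u}<S_{\vec{u}}$ for every $\vec{u}\in\mathbb{S}^1$; combining with Theorem \ref{theoremNecSufCond} yields the proposition. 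The main technical point is the vertex enumeration of $\overline{\textup{conv}}\,B$ and the reduction of the continuum of constraints to the four edge constraints; once these are in hand, the remainder amounts to the linear algebra of identifying the four edge equations of $T$.
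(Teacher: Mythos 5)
Your proof is correct, but it follows a genuinely different route from the paper's. You recognise that, since $\vec{a}$ is continuous and the measure $2\pi\sin\theta\,d\theta\,dx$ has full support, Theorem \ref{theoremNecSufCond} identifies $\mathcal{Q}$ with the interior of the closed convex hull of the range of $\vec{a}$; you then compute that range as the bowtie $B$ and its hull as the trapezoid $T$ with vertices $(1,\pm 1)$, $(-1/2,\pm 1/2)$, and read off $\mathcal{Q}=\textup{int}\,T$ from the four edge inequalities. The paper instead proves the two inclusions separately: it applies Theorem \ref{theoremNecSufCond} to the four directions $(\pm 1,0)$ and $\left(-\tfrac{1}{3},\pm 1\right)$ and evaluates $\esssup\vec{u}\cdot\vec{a}$ by hand to obtain $\mathcal{Q}\subset\{S\in(-\tfrac12,1),\,|\sigma|<\tfrac{S+2}{3}\}$, and then exhibits explicit densities $\rho_\epsilon$ concentrated near the preimages of each vertex to show that $(1,\pm1)$ and $(-\tfrac12,\pm\tfrac12)$ lie in $\overline{\mathcal{Q}}$, concluding the reverse inclusion from convexity and openness. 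Your route dispenses with the constructive density argument entirely, at the price of invoking the (correct but not stated as a lemma in the paper) identification of $\mathcal{Q}$ with $\textup{int}\,\overline{\textup{conv}}(\textup{ran}\,\vec{a})$, which requires the short compactness argument on $\mathbb{S}^1$ you sketch; the paper's version is more hands-on and stays closer to the tools it has already built. Both are sound, and the underlying computation of the supporting hyperplanes is the same in each.
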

\begin{proof}
To see that the candidate set contains $\mathcal{Q}$, use Proposition \ref{theoremNecSufCond} and test against $(\pm 1,0)$ and $\left(-\frac{1}{3},\pm 1\right)$. The maxima $a_1$ and $-a_1$ are  $1$ and $\frac{1}{2}$ respectively, which gives that $S \in \left(-\frac{1}{2},1\right)$. The maximum of $-\frac{1}{3}a_1(\theta,x)-a_2(\theta,x)$ can be found by noting that 
\begin{equation}
\begin{split}
-\frac{1}{3}a_1(\theta,x)-a_2(\theta,x) =& \frac{1}{2}\left(1-3\cos(\theta)^2\right)\left(\frac{1}{3}+\cos(2\pi x)\right)\\
\leq &\frac{1}{2} \cdot\frac{4}{3} =\frac{2}{3},
\end{split}
\end{equation}
which is attained at $\theta=\pi$ and $x=0$. For $u=\left(-\frac{1}{3},1\right)$ the same argument is used. To show that $\mathcal{Q}$ contains the candidate set is equivalent to showing that the closure of the candidate set is a subset of the closure of $\mathcal{Q}$ since the sets are convex. To prove this, it is then sufficient to show that the four vertices ($(1,\pm 1),\left(-\frac{1}{2},\pm\frac{1}{2}\right)$) are contained in the closure of $\mathcal{Q}$. This can be done constructively, and here only one vertex will be proven with the rest being shown by the same method. Let $0<\epsilon <1$, and define the set $A_\epsilon =\{(\theta,x):\cos(\theta)^2>1-\epsilon,\cos(2\pi x)>1-\epsilon\}$. Define \begin{equation}
\rho_\epsilon(\theta,x) = \left(2\pi\int_{A_\epsilon}\sin(\Theta)\,d\Theta\right)^{-1}2\pi\sin(\theta) \chi_{A_\epsilon}(\theta,x).
\end{equation}
This corresponds to a distribution uniform with respect to the measure on $X$ on $A_\epsilon$. Let the corresponding moments be denoted $S_\epsilon,\sigma_\epsilon$. Then it is immediate that 
\begin{equation}
\begin{split}
1>S_\epsilon >&\frac{1}{2}(3(1-\epsilon)-1)=1-\frac{3}{2}\epsilon\\
1>\sigma_\epsilon >& \frac{1}{2}(3(1-\epsilon)-1)(1-\epsilon)=1-\frac{5}{2}\epsilon+\frac{3}{2}\epsilon^2.
\end{split}
\end{equation}
Therefore $(S_\epsilon,\sigma_\epsilon)\in\mathcal{Q}$, and by taking $\epsilon$ to $0$, $(1,1)\in\overline{\mathcal{Q}}$. \qed
\end{proof}

Using the formula given in Appendix \ref{appendixTaylorExpand}, the exact symbolic integration package in Maple gives the fourth order Taylor approximation to the singular potential as
$$\psi_s^4(S,\sigma)= \frac{425}{196}S^4+\frac{50}{49}\sigma^2 S^2+\frac{825}{196}\sigma^4-\frac{25}{21}S^3-\frac{50}{7}\sigma^2S+\frac{5}{2}S^2+5\sigma^2 +\psi_s(0,0).$$

\begin{proposition}
For the McMillan model, $\psi_s^4$ is coercive, so that 
$$\lim\limits_{|(S,\sigma)|\to+\infty}\psi_s^4(S,\sigma)=+\infty.$$
\end{proposition}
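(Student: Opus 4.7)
The plan is to exploit the fact that $\psi_s^4$ is a polynomial of degree four in two variables, so its behaviour at infinity is governed by its homogeneous part of top degree. First I would isolate the quartic homogeneous component
\begin{equation}
p_4(S,\sigma)=\frac{425}{196}S^4+\frac{50}{49}S^2\sigma^2+\frac{825}{196}\sigma^4,
\end{equation}
and the remainder $q(S,\sigma)=\psi_s^4(S,\sigma)-p_4(S,\sigma)$, which is a polynomial of total degree at most three plus a constant. Coercivity of $\psi_s^4$ will follow as soon as $p_4$ is positive on $\mathbb{R}^2\setminus\{(0,0)\}$, since then $p_4$ dominates $q$ at infinity.

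The key verification is the positive-definiteness of $p_4$ on $\mathbb{R}^2\setminus\{(0,0)\}$. This is immediate here: every coefficient in the expression for $p_4$ is strictly positive, and the three monomials $S^4$, $S^2\sigma^2$, $\sigma^4$ are non-negative and cannot simultaneously vanish unless $S=\sigma=0$. Equivalently, setting $u=S^2,v=\sigma^2$ one has $p_4=\frac{425}{196}u^2+\frac{50}{49}uv+\frac{825}{196}v^2$, a quadratic form in $(u,v)$ with strictly positive coefficients and so strictly positive on the non-negative cone minus the origin. Since $p_4$ is continuous and strictly positive on the compact set $\{S^2+\sigma^2=1\}$, it attains a positive minimum $c>0$ there, and by homogeneity $p_4(S,\sigma)\geq c(S^2+\sigma^2)^2$ for all $(S,\sigma)\in\mathbb{R}^2$.

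To handle the remainder, I would use that each monomial in $q$ has degree at most three, so there exists a constant $C>0$, depending only on the coefficients of $q$, with
\begin{equation}
|q(S,\sigma)|\leq C\bigl(1+(S^2+\sigma^2)^{3/2}\bigr)
\end{equation}
for all $(S,\sigma)\in\mathbb{R}^2$. Combining the two estimates gives
\begin{equation}
\psi_s^4(S,\sigma)\geq c(S^2+\sigma^2)^2-C\bigl(1+(S^2+\sigma^2)^{3/2}\bigr),
\end{equation}
and letting $|(S,\sigma)|\to\infty$ the right-hand side diverges to $+\infty$, proving coercivity. There is no genuine obstacle here; the only thing one needs to check is the sign of the coefficients in $p_4$, and all three happen to be positive, so the argument reduces to a comparison of powers.
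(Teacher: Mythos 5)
Your proposal is correct and follows the same basic strategy as the paper: isolate the quartic homogeneous part and show it is positive definite, then argue that the lower-order terms are dominated at infinity. The one place you diverge is in how you certify positivity of the leading form: the paper rewrites it as a quadratic form in $(S^2,\sigma^2)$ and computes the eigenvalues $\frac{625}{196}\pm\frac{25}{49}\sqrt{5}$ of the associated matrix, which proves positive definiteness on all of $\mathbb{R}^2$, whereas you simply observe that all three coefficients of $S^4,\,S^2\sigma^2,\,\sigma^4$ are positive, which yields positivity on the nonnegative cone $\{u,v\geq 0\}$ --- exactly where $(S^2,\sigma^2)$ lives, and thus entirely sufficient. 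Your observation is more elementary (no eigenvalue computation), while the paper's establishes a slightly stronger fact than is needed; your proposal is also a bit more explicit than the paper in justifying why the degree-$\leq 3$ remainder cannot spoil coercivity, which the paper only asserts. Both arguments are sound.
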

\begin{proof}
It is sufficient to show that the fourth order terms are coercive. This can be written as a bilinear form in $S^2$ and $\sigma^2$, since 
\begin{displaymath}
\begin{split}
\frac{425}{196}S^4+\frac{50}{49}\sigma^2 S^2+\frac{825}{196}\sigma^2 =& \left[\begin{matrix} S^2 \\ \sigma^2 \end{matrix}\right]\cdot \left[\begin{matrix}\frac{425}{196} & \frac{25}{49} \\ \frac{25}{49} & \frac{825}{196}\end{matrix}\right] \left[\begin{matrix} S^2 \\ \sigma^2 \end{matrix}\right].
\end{split}
\end{displaymath}
The eigenvalues of the matrix are given by $\frac{625}{196}\pm\frac{25}{49}\sqrt{5}$, which are both positive (evaluating at approximately $2.04$ and $4.33$), so that the leading order terms are positive.
\end{proof} 
\begin{proposition}
In the McMillan model, $\psi_s^4$ is not a convex function of $S,\sigma$, even when restricted to $\mathcal{Q}$.
\end{proposition}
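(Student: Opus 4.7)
The plan is to exhibit a single point in $\mathcal{Q}$ at which the Hessian of $\psi_s^4$ is not positive semidefinite, which precludes convexity. The analysis is greatly simplified by restricting to the line $\sigma=0$: every $\sigma$-dependent term of $\psi_s^4$ contains $\sigma$ to an even power, so on this line the mixed partial $\partial^2\psi_s^4/\partial S\partial\sigma$ vanishes identically and the Hessian is diagonal. Convexity at such a point is therefore equivalent to the simultaneous non-negativity of the two pure second derivatives.

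The first step would be to compute
\begin{equation*}
\left.\frac{\partial^2\psi_s^4}{\partial\sigma^2}\right|_{\sigma=0}(S)=\frac{100}{49}S^2-\frac{100}{7}S+10,
\end{equation*}
and observe that this quadratic takes the value $10>0$ at $S=0$ but equals $-\tfrac{110}{49}<0$ at $S=1$. By continuity it is strictly negative on some subinterval of $(0,1)$ terminating at $1$; concretely, at $S=\tfrac{9}{10}$ it evaluates to $-\tfrac{59}{49}$.

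Next I would verify $(\tfrac{9}{10},0)\in\mathcal{Q}$, which is immediate from the explicit description proved above: $-\tfrac{1}{2}<\tfrac{9}{10}<1$ and $|0|<(\tfrac{9}{10}+2)/3$. At this point the Hessian of $\psi_s^4$ is diagonal with a strictly negative $(\sigma,\sigma)$-entry, so the Hessian has a negative eigenvalue in a direction that remains inside $\mathcal{Q}$ for small perturbations, and convexity fails.

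There is no real obstacle to this strategy; the work reduces to an arithmetic check. The only insight required is choosing the right direction of non-convexity: since $\psi_s^4$ is the fourth-order Taylor expansion about the origin of the strictly convex singular potential $\psi_s$ (whose Hessian at $\vec{0}$ is positive), $\psi_s^4$ is automatically convex near $(0,0)$, so any failure of convexity must be sought away from the origin. The natural direction to probe turns out to be the $\sigma$-axis at large $S$, which is exactly where the quadratic above changes sign.
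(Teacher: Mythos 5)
Your proof is correct and takes essentially the same approach as the paper: compute the Hessian of $\psi_s^4$ along $\sigma=0$, observe that it is diagonal there, and exhibit a point where the $(\sigma,\sigma)$-entry is negative. The only (cosmetic) difference is that the paper evaluates at the boundary point $(1,0)$ and then appeals to continuity of the Hessian together with $(1,0)\in\partial\mathcal{Q}$, whereas you pick the interior point $(\tfrac{9}{10},0)\in\mathcal{Q}$ directly, which saves that extra step.
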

\begin{proof}
It is sufficient to show that the Hessian matrix of $\psi_s^4$ has a negative eigenvalue for some $(S,\sigma)$. The Hessian matrix is readily computed as 
\begin{displaymath}
\begin{split}
H(S,\sigma)=& \frac{1}{49}\left[\begin{matrix}
 1275S^2+100\sigma^2-350 S+245 & 200S\sigma-700\sigma\\ 200S\sigma-700\sigma& 100S^2+2475\sigma^2-700S+490
\end{matrix}\right],\\
\Rightarrow H(1,0)=&\frac{1}{49}\left[\begin{matrix} 1275-350+245 & 0 \\ 0 & 100-700+490\end{matrix}\right]\\
=&\frac{1}{49}\left[\begin{matrix} 1170 & 0 \\ 0 & -110\end{matrix}\right].
\end{split}
\end{displaymath}
Therefore it has a negative eigenvalue, and $\psi_s^4$ is not convex in this case. In particular, it is not convex in a neighbourhood of $(1,0)$, since the Hessian is continuous. By noting that the intersection of any neighbourhood of $(1,0)$ with $\mathcal{Q}$ is non-empty since $(1,0)\in\partial\mathcal{Q}$, this implies that $\psi_s^4$ is not convex on $\mathcal{Q}$.
\end{proof}
To conclude, contour plots of of the singular potential and its approximations are given in Figure \ref{figureContourPlots}. The singular potential itself is given in Subfigure \ref{figureContourPlotSing}. It should be noted that $\psi_s$ is only given on $\{(S,\sigma) \in \mathcal{Q}: d\left((S,\sigma),\partial\mathcal{Q}\right)>10^{-2}\}$ to avoid the difficulty in calculating the singular potential near $\partial \mathcal{Q}$. The fourth order Taylor approximation is given in Subfigure \ref{figureContourPlotTay}. Also included on the plot is a  white dashed line, which is the boundary of the set where the Hessian matrix has a negative eigenvalue. The Yosida-Moreau approximation for $J=100 $ is given inSubfigure \ref{figureContourPlotYM}. The numerical values for $\psi_s$ and $\psi^J$ were obtained via a steepest descent algorithm on the dual optimisation problem.
 
\begin{figure}[H]
\begin{subfigure}[t]{0.32\textwidth}
\includegraphics[scale=0.22]{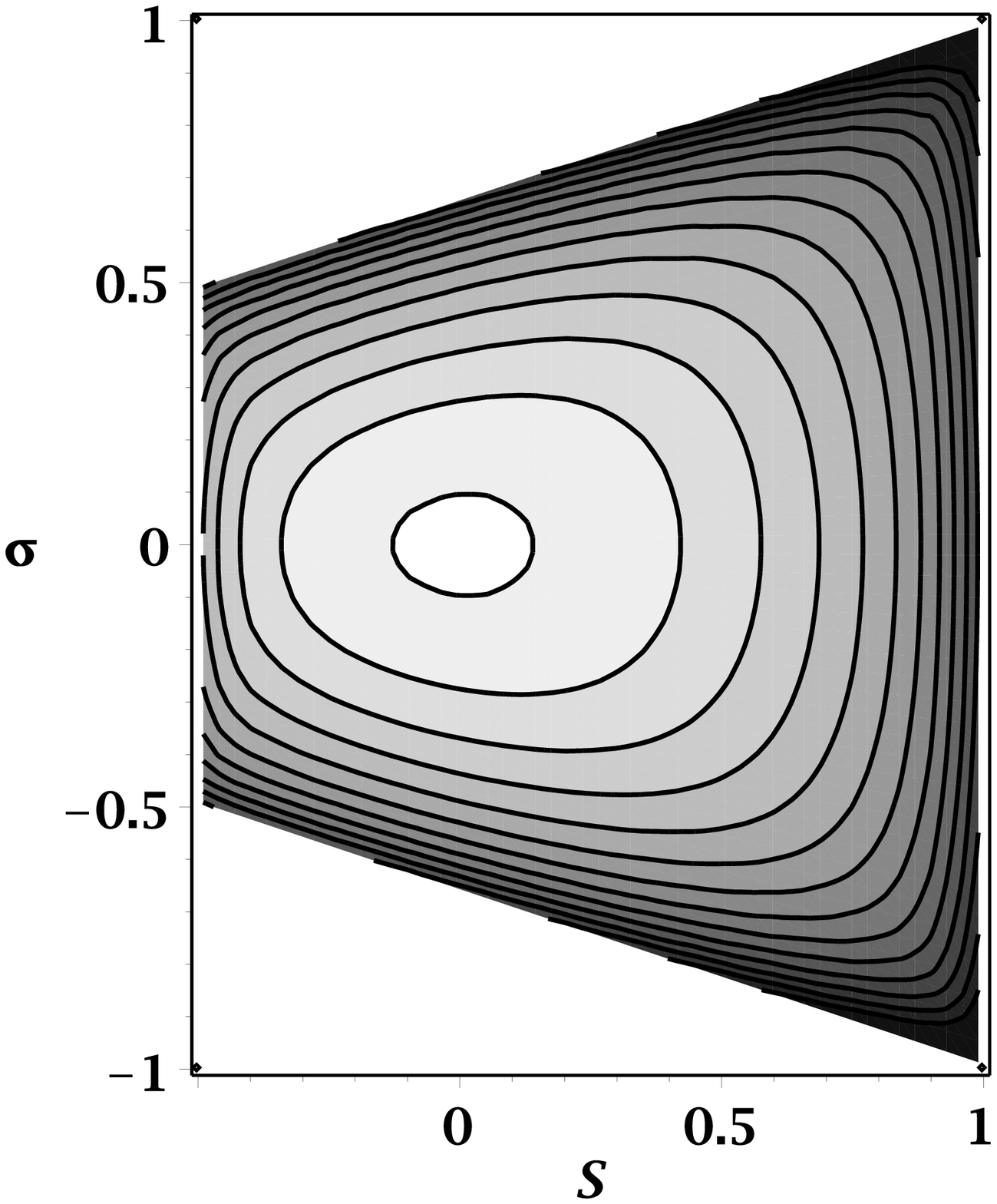} 
\caption{$\psi_s$.}
\label{figureContourPlotSing}
\end{subfigure}
\begin{subfigure}[t]{0.32\textwidth}
\includegraphics[scale=0.22]{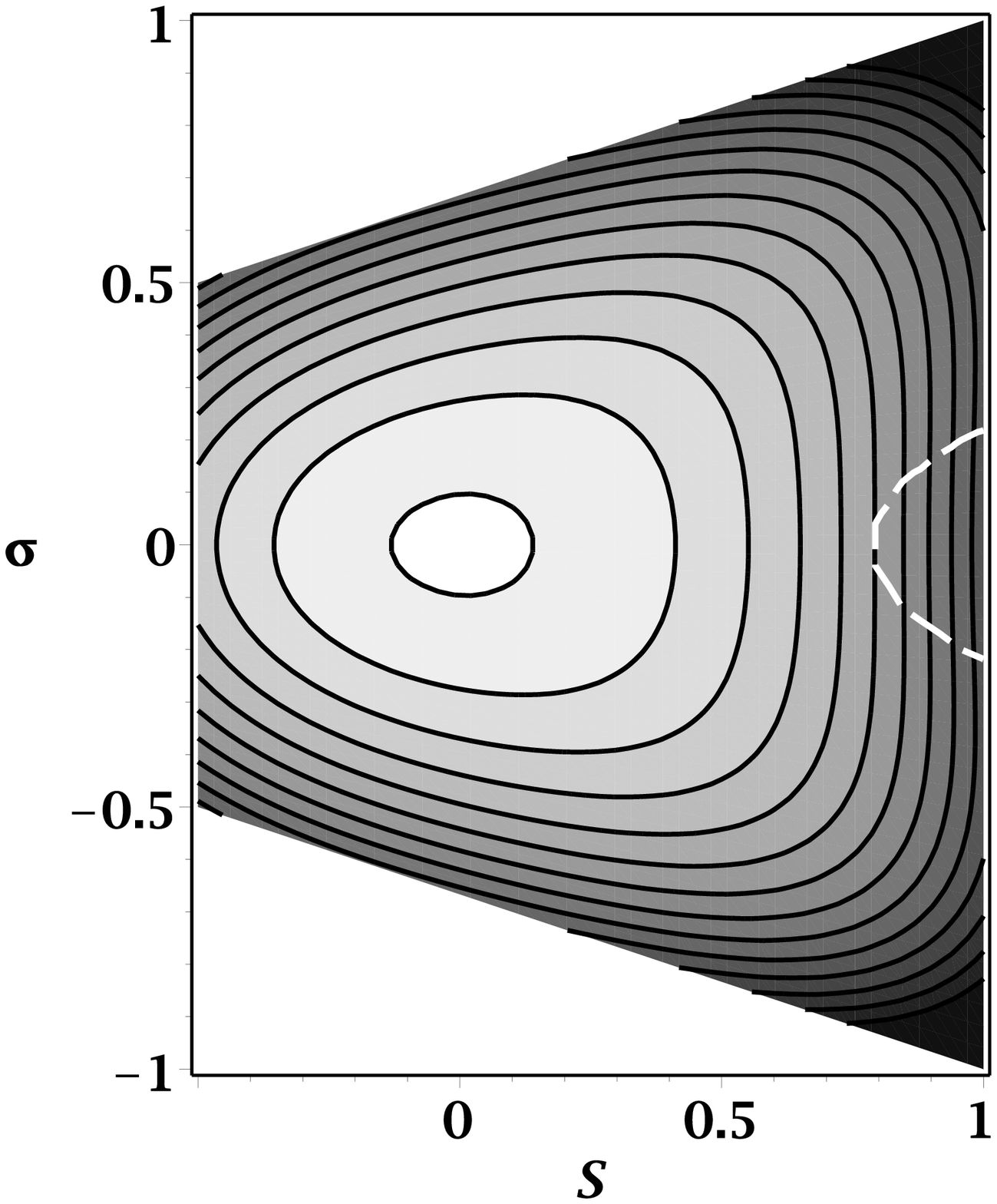} 
\caption{$\psi^4_s$.}
\label{figureContourPlotTay}
\end{subfigure}
\begin{subfigure}[t]{0.32\textwidth}
\includegraphics[scale=0.22]{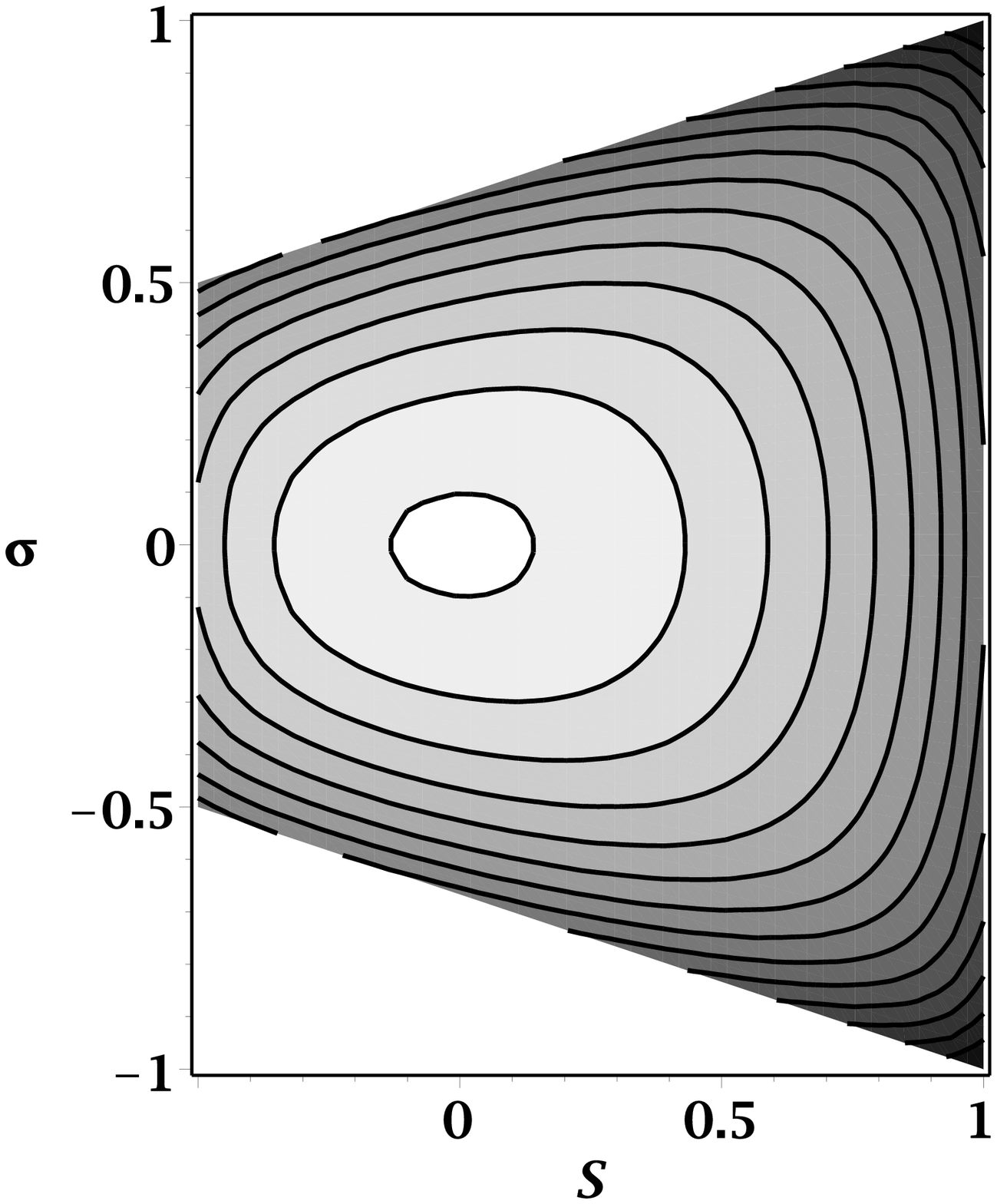} 
\caption{$\psi^J$, $J=100$.}
\label{figureContourPlotYM}
\end{subfigure}
\begin{center}
\begin{subfigure}[t]{0.5\textwidth}
\includegraphics[scale=0.3]{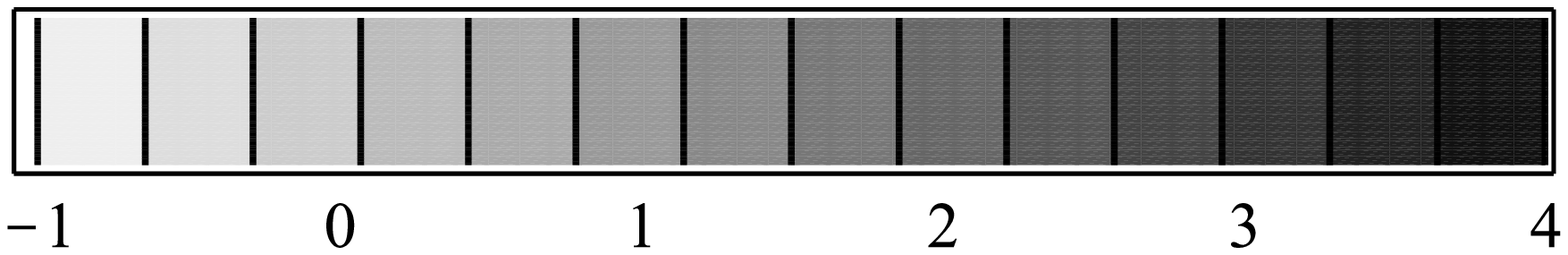}
\end{subfigure}
\end{center}
\caption{Contour plots of the singular potential and its approximations.}
\label{figureContourPlots}
\end{figure}

\appendix
\section{Appendix: The Fourth Order Taylor Expansion for Shannon Entropy}
\label{appendixTaylorExpand}
For this section consider the pseudo-Haar constraint functions $(a_i)_{i=1}^k$ to be orthogonal to the constant function so that $\int_X a_i \,d\mu=0$, but not necessarily orthogonal to each other. Furthermore the in this section the convention that Greek indices are summed over, whilst Latin indices are free, is used. Given $n \in \mathbb{N}$ and $\rho \in \mathcal{P}(X)$ define the tensor $\tens{M}^n \in (\mathbb{R}^k)^n$ component-wise by 
\begin{equation}
M^n_{i_1i_2...i_n}(\rho)=\int_X a_{i_1}a_{i_2}...a_{i_n}\rho\,d\mu.
\end{equation}
for $i_j=1,..,k$, $j=1,..n,$. Consider the function $\vec{b} \mapsto \tens{M}^n(\rho_{\vec{b}})$, which when unambiguous will simply be denoted $\tens{M}^n$, with $\rho_{\vec{b}}$ maximal entropy under the Shannon entropy $\phi(x)=x\ln x$. This allows any maximal entropy $\rho \in \mathcal{P}(X)$ to be written as 
\begin{equation}
\rho(t)=\frac{1}{Z}\exp\left(\lambda\cdot \vec{a}(t)\right).
\end{equation}
The map $\vec{b} \mapsto \tens{M}^n(\rho_{\vec{b}})$ is differentiable, so by applying the chain rule the derivative can be found as 
\begin{equation}
\begin{split}
\frac{\partial M^n_{i_1i_2...i_n}}{\partial \vec{b}_{i_{n+1}}} &= \frac{\partial M^n_{i_1i_2...i_n}}{\partial \lambda_{\alpha}}\frac{\partial \lambda_{\alpha}}{\partial b_{i_{n+1}}}\\
&= \left(\int_X a_{i_1}a_{i_2}...a_{i_n}a_{\alpha}\rho_b\,d\mu -\frac{1}{Z} M^n_{i_1i_2...i_n}\frac{\partial Z}{\partial \lambda_\alpha}\right)\left((M^2- b \otimes b)^{-1}\right)_{\alpha i_{n+1}}\\
&=\left(M^{n+1}_{i_1i_2...i_n i_\alpha}-M^n_{i_1i_2...i_n}b_\alpha\right)V_{\alpha i_{n+1}}
\end{split}
\end{equation}
where for brevity $\tens{V}=\left(\tens{M}^2-\vec{b}\otimes \vec{b}\right)^{-1}$. That the derivatives can be seen to satisfy this by applying Propositions \ref{propSingularPotSmooth} and \ref{propSingularFirstDerivative} with some elementary calculus. From here it is now possible to perform a Taylor expansion of $\psi_s$ to low order. Immediately it holds that $\psi_s(\vec{0})=-\frac{1}{\mu(X)}\ln(\mu(X))$. For the first derivative it was shown in Proposition \ref{propSingularFirstDerivative} that 
\begin{equation}
\frac{\partial \psi_s}{\partial b_i}(\vec{b})=\lambda_i.
\end{equation}
Using the strict convexity of $\phi$, it is immediate that the uniform distribution is the global entropy minimiser of $\mathcal{P}(X)$, so that in particular $\frac{1}{\mu(X)}=\rho_{\vec{b}}(t)$ for $\vec{b}=\vec{0}$ and almost every $t \in X$. By rewriting the uniform distribution on $X$ as $\frac{1}{\mu(X)}=(\phi')^{-1}(\alpha+\lambda\cdot \vec{a}(t))$ for $\lambda =\vec{0}$ and $\alpha=(\phi')\left(\mu(X)^{-1}\right)$ demonstrates that $\lambda(\vec{0})=\vec{0}$, so that 
$$\frac{\partial \psi_s}{\partial b_i}(\vec{0})=\vec{0}.$$ 
Furthermore this gives that $\vec{b}=\vec{0}$ is always a critical point for $\psi_s$ and for any truncated Taylor expansion of $\psi_s$ about $\vec{b}=\vec{0}$ with order greater than or equal to 1. For the second derivative it has been seen in Prop. \ref{propSecondDerivative} that 
\begin{equation}
\begin{split}
\frac{\partial^2 \psi_s}{\partial b_{i_1} \partial b_{i_2}}(\vec{b})&=\left(\left(\tens{M}^2(\vec{b})-\vec{b}\otimes \vec{b}\right)^{-1}\right)_{i_1i_2}\\
\Rightarrow \frac{\partial^2 \psi_s}{\partial b_{i_1}\partial b_{i_2}}(\vec{0}) &= \left(\left(M^2(\vec{0})\right)^{-1}\right)_{i_1i_2}.
\end{split}
\end{equation}
As seen before, this is necessarily a positive definite matrix, and consequently $b=0$ is always a local minimum for $\psi_s$ and any Taylor expansion of order greater than or equal to 2. For the third derivative first note that the derivative of $V$ will have to be taken and since this involves a matrix inverse a more complex expression appears,
\begin{equation}
\begin{split}
\frac{\partial V_{i_1i_2}}{\partial b_{i_3}}&= -V_{i_1\alpha_1}V_{i_2\alpha_2}\frac{\partial M^2_{\alpha_1\alpha_2}-b_{\alpha_1}b_{\alpha_2}}{\partial b_{i_3}}\\
&=-V_{\alpha_1i_1}V_{\alpha_2i_2}V_{\alpha_3i_3} \left(M^3_{\alpha_1\alpha_2\alpha_3}-b_{\alpha_1}M^2_{\alpha_2\alpha_3} - b_{\alpha_2}M^2_{\alpha_1\alpha_3} - b_{\alpha_3}M^2_{\alpha_1\alpha_2}\right).
\end{split}
\end{equation}
This gives that the third derivative at $b=0$ can be given by 
\begin{equation}
\frac{\partial^3\psi_s}{\partial b_{i_1}\partial b_{i_2}\partial b_{i_3}}(\vec{0}) = -V_{i_1\alpha_1}(\vec{0})V_{i_2\alpha_2}(\vec{0})V_{i_3\alpha_3}(\vec{0})M^3_{\alpha_1\alpha_2\alpha_3}(0).
\end{equation}
Since the Taylor expansion will only be taken to fourth order, and the expansion is to be performed around $\vec{b}=\vec{0}$, all terms with factors of $\vec{b}$ will be denoted as $A\vec{b}$, which can later be neglected in calculating the fourth derivative of $\psi_s$ at $\vec{b}=\vec{0}$. It can be given as 
\begin{equation}
\begin{split}
\frac{\partial^4 \psi_s}{\partial b_{i_1}\partial b_{i_2}\partial b_{i_3}\partial b_{i_4}} &= V_{i_1\beta_1}V_{i_2\alpha_2}V_{i_3\alpha_3}V_{i_4\beta_3} V_{\alpha_1\beta_2} M^3_{\alpha_1\alpha_2\alpha_3}M^3_{\beta_1\beta_2\beta_3}\\
&+V_{i_1\alpha_1}V_{i_2\beta_1}V_{i_3\alpha_3}V_{i_4\beta_3} V_{\alpha_2\beta_2} M^3_{\alpha_1\alpha_2\alpha_3}M^3_{\beta_1\beta_2\beta_3}\\
&+V_{i_1\alpha_1}V_{i_2\alpha_2}V_{i_3\beta_1}V_{i_4\beta_3} V_{\alpha_3\beta_2} M^3_{\alpha_1\alpha_2\alpha_3}M^3_{\beta_1\beta_2\beta_3}\\
&-\left(V_{i_1\alpha_1}V_{i_2\alpha_2}V_{i_3\alpha_3}V_{i_4\alpha_4}\right)\\ &\times\left(M^4_{\alpha_1\alpha_2\alpha_3\alpha_4}-M^2_{\alpha_1\alpha_2}M^2_{\alpha_3\alpha_4}-M^2_{\alpha_1\alpha_3}M^2_{\alpha_2\alpha_4}-M^2_{\alpha_1\alpha_4}M^2_{\alpha_2\alpha_3}\right)\\
&+A\vec{b}.
\end{split}
\end{equation}
At the isotropic state it must hold that $A\vec{b}=\vec{0}$, so that the fourth derivative at $\vec{b}=\vec{0}$ is given by the remaining terms. 

\section*{Acknowledgement}
The author would like to thank John Ball for insightful discussions relating to the work in this paper. Also the author would like to thank Apala Majumdar and Tim Sluckin for interesting related discussions that took place during the MLC Young Researchers Meeting at the Isaac Newton Institute in Cambridge, as well as the organisers of the MLC programme for providing the forum for the discussions. The research leading to these results has received funding from the European Research Council under the European Union's Seventh Framework Programme (FP7/2007-2013) / ERC grant agreement n$^{\circ}$ 291053.

\bibliography{EntropyOrderParametersStatPhys}

\begin{thebibliography}{10}

\bibitem{ahmadi2013np}
{\sc Ahmadi, A.~A., Olshevsky, A., Parrilo, P.~A., and Tsitsiklis, J.~N.}
\newblock {NP}-hardness of deciding convexity of quartic polynomials and
  related problems.
\newblock {\em Mathematical Programming 137}, 1-2 (2013), 453--476.

\bibitem{akhiezer1965classical}
{\sc Akhiezer, N.~I., and Kemmer, N.}
\newblock {\em The classical moment problem: and some related questions in
  analysis}, vol.~5.
\newblock Oliver \& Boyd Edinburgh, 1965.

\bibitem{amann2009analysis}
{\sc Amann, H., Escher, J., Levy, S., and Cargo, M.}
\newblock {\em Analysis III}.
\newblock Springer, 2009.

\bibitem{ball2014equilibrium}
{\sc Ball, J.~M., and Majumdar, A.}
\newblock {Equilibrium order parameters of liquid crystals in the {Q}-tensor
  framework}.
\newblock {\em In preparation\/}.

\bibitem{ball2010nematic}
{\sc Ball, J.~M., and Majumdar, A.}
\newblock {Nematic liquid crystals: from Maier-Saupe to a continuum theory}.
\newblock {\em Mol Cryst Liq Cryst 525}, 1 (2010), 1--11.

\bibitem{berard1986spectral}
{\sc B{\'e}rard, P.~H.}
\newblock {\em Spectral geometry direct and inverse problems}.
\newblock Springer, 1986.

\bibitem{borwein1991duality}
{\sc Borwein, J.~M., and Lewis, A.~S.}
\newblock Duality relationships for entropy-like minimization problems.
\newblock {\em SIAM J. Control Optim. 29}, 2 (1991), 325--338.

\bibitem{borwein1992partially}
{\sc Borwein, J.~M., and Lewis, A.~S.}
\newblock Partially finite convex programming, part {I}: Quasi relative
  interiors and duality theory.
\newblock {\em Math. Program. 57}, 1-3 (1992), 15--48.

\bibitem{brezis1973ope}
{\sc Brezis, H.}
\newblock {\em Operateurs maximaux monotones et semi-groupes de contractions
  dans les espaces de Hilbert}, vol.~5.
\newblock Elsevier, 1973.

\bibitem{calderer2013landau}
{\sc Calderer, M.~C., Garavito, C.~A., and Yan, B.}
\newblock A {Landau-de Gennes} theory of liquid crystal elastomers.
\newblock {\em arXiv preprint arXiv:1303.6220\/} (2013).

\bibitem{dacorogna2007direct}
{\sc Dacorogna, B.}
\newblock {\em Direct methods in the calculus of variations}, vol.~78.
\newblock Springer, 2007.

\bibitem{decarreau1992dual}
{\sc Decarreau, A., Hilhorst, D., Lemar{\'e}chal, C., and Navaza, J.}
\newblock Dual methods in entropy maximization. application to some problems in
  crystallography.
\newblock {\em SIAM Journal on Optimization 2}, 2 (1992), 173--197.

\bibitem{fatkullin2005critical}
{\sc Fatkullin, I., and Slastikov, V.}
\newblock Critical points of the {Onsager} functional on a sphere.
\newblock {\em Nonlinearity 18}, 6 (2005), 2565--2580.

\bibitem{feireisl2012evolution}
{\sc Feireisl, E., Rocca, E., Schimperna, G., and Zarnescu, A.}
\newblock Evolution of non-isothermal {L}andau-de {G}ennes nematic liquid
  crystals flows with singular potential.
\newblock {\em arXiv preprint arXiv:1207.1643\/} (2012).

\bibitem{han2013microscopic}
{\sc Han, J., Luo, Y., Wang, W., and Zhang, P.}
\newblock From microscopic theory to macroscopic theory: a systematic study on
  static modeling for liquid crystals.
\newblock {\em arXiv preprint arXiv:1305.4889\/} (2013).

\bibitem{harris2004introduction}
{\sc Harris, S.}
\newblock {\em An introduction to the theory of the Boltzmann equation}.
\newblock Courier Dover Publications, 2004.

\bibitem{jaynes1957information}
{\sc Jaynes, E.~T.}
\newblock Information theory and statistical mechanics.
\newblock {\em Phys. Rev. 106}, 4 (1957), 620--630.

\bibitem{katriel1986free}
{\sc Katriel, J., Kventsel, G., Luckhurst, G., and Sluckin, T.}
\newblock Free energies in the {L}andau and molecular field approaches.
\newblock {\em Liquid Crystals 1}, 4 (1986), 337--355.

\bibitem{lewis1995consistencyReport}
{\sc Lewis, A.~S.}
\newblock Consistency of moment systems.
\newblock {\em Technical Report CORR 92-23\/} (1993).

\bibitem{lewis1995consistency}
{\sc Lewis, A.~S.}
\newblock Consistency of moment systems.
\newblock {\em Canadian Journal of Mathematics 47}, 5 (1995), 995--1006.

\bibitem{majumdar2010equilibrium}
{\sc Majumdar, A.}
\newblock Equilibrium order parameters of nematic liquid crystals in the
  {Landau-de Gennes} theory.
\newblock {\em Eur. J. Appl. Math 21\/} (2010), 181--203.

\bibitem{mcmillan1971simple}
{\sc McMillan, W.~L.}
\newblock Simple molecular model for the smectic {A} phase of liquid crystals.
\newblock {\em Physical Review A 4}, 3 (1971), 1238.

\bibitem{moreau1965proximite}
{\sc Moreau, J.-J.}
\newblock Proximit{\'e} et dualit{\'e} dans un espace hilbertien.
\newblock {\em Bulletin de la Soci{\'e}t{\'e} math{\'e}matique de France 93\/}
  (1965), 273--299.

\bibitem{mottram2004introduction}
{\sc {Mottram}, N.~J., and {Newton}, C.~J.~P.}
\newblock {Introduction to Q-tensor theory}.
\newblock {\em ar{X}iv:1409.3542\/} (2014).

\bibitem{onsager1949effects}
{\sc Onsager, L.}
\newblock The effects of shape on the interaction of colloidal particles.
\newblock {\em Annals of the New York Academy of Sciences 51}, 4 (1949),
  627--659.

\bibitem{rockafellar1997convex}
{\sc Rockafellar, R.~T.}
\newblock {\em Convex analysis}, vol.~28.
\newblock Princeton university press, 1997.

\bibitem{shvedov1981coconvex}
{\sc Shvedov, A.~S.}
\newblock Coconvex approximation of functions of several variables by
  polynomials.
\newblock {\em Matematicheskii Sbornik 157}, 4 (1981), 577--589.

\bibitem{straley1974ordered}
{\sc Straley, J.~P.}
\newblock Ordered phases of a liquid of biaxial particles.
\newblock {\em Physical Review A 10}, 5 (1974), 1881.

\bibitem{toledano1987landau}
{\sc Tol{\'e}dano, J.-C., and Tol{\'e}dano, P.}
\newblock {\em The Landau theory of phase transitions}.
\newblock World Scientific, 1987.

\bibitem{treloar1975physics}
{\sc Treloar, L. R.~G.}
\newblock {\em The physics of rubber elasticity}.
\newblock Oxford University Press, 1975.

\bibitem{wilkinson2012strict}
{\sc Wilkinson, M.}
\newblock Strict physicality of global weak solutions of a {Navier-Stokes
  Q-tensor} system with singular potential.
\newblock {\em arXiv preprint arXiv:1211.6083\/} (2012).

\end{thebibliography}

\end{document}